\documentclass[12pt]{amsart}

\usepackage{fullpage}

\usepackage[colorlinks, linkcolor=blue, citecolor=blue, urlcolor=red!80!gray, pagebackref]{hyperref}
\usepackage{amssymb, amsmath, xcolor} 
\usepackage{xspace}
\usepackage{import}
\usepackage{tikz}
\usetikzlibrary{backgrounds}
\usepackage{ytableau}
\usepackage{xcolor}
\usepackage{array}
\tikzset{filled/.style={minimum width=5pt,inner sep=0pt,circle,fill=black}}
\usepackage{subcaption}

\usepackage[indent]{parskip}
\usepackage{mathtools}
\usepackage[utf8]{inputenc}
\usepackage{float}
\usepackage{longtable}
\usepackage{todonotes}
\usepackage[square,numbers]{natbib}
\usepackage{comment}
\newtheorem{theorem}{Theorem}[section]
\newtheorem{lemma}[theorem]{Lemma}
\newtheorem{corollary}[theorem]{Corollary}

\theoremstyle{definition}
\newtheorem{definition}[theorem]{Definition}
\newtheorem{example}[theorem]{Example}

\theoremstyle{remark}
\newtheorem{remark}[theorem]{Remark}

\numberwithin{equation}{section}
\numberwithin{figure}{section}

\renewcommand{\mu}{\Lambda}

\renewcommand{\mod}{\operatorname{mod}}
\newcommand{\N}{\mathbb{N}}
\newcommand{\Z}{\mathbb{Z}}

\newcommand{\SYT}{\operatorname{SYT}}
\newcommand{\lsmash}{
    \raisebox{0.6ex}{
        \begin{tikzpicture}[baseline=0ex]
            \draw (0,0) circle [radius=0.7ex];
            \draw[->, line width=0.3pt] (0.45ex,0) -- (-0.45ex,0);
        \end{tikzpicture}
    }
} 
\newcommand{\biglsmash}{
  \mathop{
    \raisebox{0.6ex}{
        \begin{tikzpicture}[baseline=0ex]
            \draw[line width=0.8pt] (0,0) circle [radius=1.6ex];
            \draw[->, line width=0.8pt] (0.9ex,0) -- (-0.9ex,0);
        \end{tikzpicture}
    }
  }\limits
} 
\newcommand{\jdt}{\operatorname{jdt}}
\newcommand{\prom}{\jdt}

\newcommand{\withlines}[1]{%
  \begin{tikzpicture}[baseline=(T.base)]
    \node[inner sep=-0.18pt, outer sep=0] (T) {#1};
    \draw[line width=.4pt] (T.north west) -- ([xshift=1em]T.north east);
    \draw[line width=.4pt] (T.south west) -- ([xshift=1em]T.south east);
  \end{tikzpicture}%
}

\newcommand{\withlinesvert}[1]{%
  \begin{tikzpicture}[baseline=(T.base)]
    \node[inner sep=-0.18pt, outer sep=0] (T) {#1};
    \draw[line width=.4pt] (T.north west) -- ([yshift=-1em]T.south west);
    \draw[line width=.4pt] (T.north east) -- ([yshift=-1em]T.south east);
  \end{tikzpicture}%
}

\newcommand{\withlinesboth}[1]{%
  \begin{tikzpicture}[baseline=(T.base)]
    \node[inner sep=-0.18pt, outer sep=0] (T) {#1};
    \draw[line width=.4pt] (T.north west) -- ([xshift=1em]T.north east);
    \draw[line width=.4pt] (T.north west) -- ([yshift=-1em]T.south west);
  \end{tikzpicture}%
}

\title[Jeu de taquin on infinite Young tableaux]{Chaotic and periodic behavior of jeu de taquin on infinite Young tableaux}

\author[Herden, Hunziker, Meddaugh, Sepanski]{
    Daniel Herden,
    Markus Hunziker,
    Jonathan Meddaugh,
    Mark R. Sepanski,
    Lauren Engelthaler,
    Jonathan Feigert,
    Teresa Gulding,
    Jes\'us Hern\'andez-Rodr\'iguez,
    Mitchell Minyard,
    Kyle Rosengartner,
    Dona~Ishara~N.~Saparamadu,
    Robert Stansbury
    }
    
\date{\today}

\address{
	Department of Mathematics,
	Baylor University,
	Sid Richardson Building,
	1410 S.~4th Street,
	Waco, TX 76706, USA}

\email{
    Daniel\_Herden@baylor.edu,
    Markus\_Hunziker@baylor.edu,
    Jonathan\_Meddaugh@baylor.edu,
    Mark\_Sepanski@baylor.edu,
    Lauren\_Engelthaler1@baylor.edu,
    Jonathan\_Feigert1@baylor.edu,
    Teresa\_Gulding1@baylor.edu,
    Jesus\_Hernandez6@baylor.edu,
    Mitch\_Minyard1@baylor.edu,
    Kyle\_Rosengartner1@baylor.edu,
    Ishara\_Saparamadu1@baylor.edu,
    Robbie\_Stansbury1@baylor.edu
    }

\thanks{The first author was supported by Simons Foundation grant MPS-TSM-00007788.
	The third author was supported by a grant from the Simons Foundation (960812, JM)}

\begin{document}

\keywords{jeu de taquin, infinite Young tableaux, promotion}
\subjclass[2020]{Primary: 05E10; Secondary: 20B30}

\begin{abstract} 
    Young tableaux are fundamental objects in algebraic combinatorics and
    representation theory, with operations such as promotion and jeu de taquin
    playing a central role in their structure and applications.
    While these operations are well understood for finite tableaux, their behavior
    on infinite tableaux has so far been studied mainly within probabilistic
    frameworks.
    In this paper, we investigate jeu de taquin on infinite standard Young tableaux
    from a purely combinatorial and dynamical point of view.
    We analyze the action of jeu de taquin on infinite shapes, describe the
    structure of inverse images, and classify tableaux exhibiting periodic,
    pre-periodic, and recurrent behavior.
    We also introduce a natural metric on the space of infinite tableaux and show
    that jeu de taquin defines a chaotic dynamical system in the sense of Devaney.
    These results extend classical tableau theory to infinite
    settings and identify connections between combinatorial
    dynamics and infinite representation-theoretic structures.
\end{abstract}

\maketitle
\setcounter{tocdepth}{1}
 \tableofcontents

\section{Introduction}
\emph{Young diagrams} and \emph{Young tableaux} are classical combinatorial objects
that play a central role in algebraic combinatorics, representation theory,
and algebraic geometry \cite{fulton1997young}.
Among the most important operations on Young tableaux are \emph{promotion}
and the closely related process of \emph{jeu de taquin}, both introduced by
Sch\"utzenberger \cite{Schutzenberger-Promotion-Original,Schutzenberger1977}.
Jeu de taquin provides a fundamental mechanism for rectifying skew tableaux
and lies at the heart of many structural results in tableau theory.
A systematic modern treatment of promotion and its interaction with
evacuation was given by Stanley \cite{MR2515772}.

Since their introduction, promotion and jeu de taquin have appeared in a wide
range of mathematical contexts.
They play a key role in Schubert calculus and $K$-theoretic Schubert calculus
\cite{MR3200334,levinson2017one,purbhoo2013wronskians,MR2491941,MR2806593},
in the theory of classical Lie groups
\cite{arnal2018sliding,MR1466956},
in crystal theory and related representation-theoretic frameworks
\cite{pfannerer2020promotion,van1998analogue},
and in the study of cyclic sieving phenomena and representation theory
\cite{alexandersson2021skew,petersen2009promotion,rhoades2010cyclic}.
They also arise naturally in integrable systems and interacting particle models
\cite{iwao2019jeu}.
More recently, promotion has been studied from an explicitly dynamical
viewpoint, notably through the introduction of promotion permutations and
promotion matrices, which encode the global orbit structure of promotion and
have led to new constructions in web combinatorics
\cite{gaetz2023promotion,gaetz2025web}.
These developments fit into the broader program of
\emph{dynamical algebraic combinatorics}, which studies natural combinatorial
operators via their dynamical properties \cite{Roby2014}.

While Young tableaux are most often considered for finite shapes, the notion
of a standard Young tableau extends naturally to infinite diagrams.
Infinite tableaux were first studied systematically by Vershik and Kerov in
their work on the Plancherel measure and asymptotic representation theory of
the infinite symmetric group
\cite{kerov1986characters,vervshik1977asymptotic,vershik1981asymptotic}.
Within this probabilistic framework, infinite tableaux and related algorithms
such as Robinson--Schensted--Knuth (RSK) exhibit rich limiting behavior.

Jeu de taquin on infinite tableaux has so far been investigated primarily from
this probabilistic perspective.
In particular, Romik and \'{S}niady developed a detailed theory of jeu de taquin
dynamics on random infinite Young tableaux, relating sliding paths to
interacting particle systems and second class particles
\cite{RomikSniady2015}.
Related work has further explored limit shapes and asymptotic behavior of
sliding and evacuation paths for random tableaux
\cite{MaslankaSniady2019},
as well as structural properties of RSK and jeu de taquin under Kerov--Vershik
measures \cite{Sniady2014}.
Outside this probabilistic setting, however, the behavior of jeu de taquin as
an \emph{iterated combinatorial dynamical system} on infinite tableaux has
received comparatively little attention.
In contrast to this probabilistic perspective, our approach treats jeu de
taquin as a deterministic combinatorial dynamical system and focuses on
qualitative dynamical behavior---such as periodicity, recurrence, and
chaos---rather than on asymptotic laws arising from specific probability
measures.

This paper studies jeu de taquin on infinite standard Young tableaux from a
combinatorial and dynamical point of view, independent of probabilistic
assumptions.
We analyze the action of jeu de taquin as a self-map on spaces of infinite
tableaux, focusing on its global dynamical behavior.
In particular, we classify and construct tableaux exhibiting periodic and
pre-periodic behavior, describe the structure of inverse images under jeu de
taquin, and study recurrent phenomena in settings where genuine periodicity is
impossible.
We also introduce a natural metric on the space of infinite tableaux and show
that jeu de taquin exhibits chaotic behavior in the sense of Devaney.
Unlike most previously studied combinatorial operators, which act on finite
sets or exhibit rigid orbit structures, jeu de taquin on infinite tableaux
gives rise to a noncompact dynamical system in which chaos arises from the
interaction of local combinatorial rules with infinite global structures.

The paper is organized as follows.
In Section~\ref{sec: notation}, we introduce the notation and terminology used
throughout, including infinite shapes and the left smash construction.
Section~\ref{sec: basic} establishes foundational properties of jeu de taquin
on infinite tableaux, including a detailed analysis of inverse images.
In Section~\ref{sec: periodic}, we characterize periodic and pre-periodic
tableaux and describe their structure.
Section~\ref{sec: chaotic} introduces a natural metric on spaces of infinite
tableaux and proves that jeu de taquin is chaotic in the sense of Devaney.
In Section~\ref{sec: recurrence}, we construct and analyze examples of
recurrent infinite tableaux.
Finally, Section~\ref{sec: concluding remarks} discusses open problems and
directions for future research.

\section{Notation} \label{sec: notation}

\subsection{Tableaux}

See \cite{fulton1997young} and \cite{Sagan2001} for references.
We write $\N$ for the set of positive integers, and for $a, b \in \N$ with $a \leq b$, we write 
\[[a, b]=\{c\in\N \mid a\leq c\leq b\}.\]
For $n \in \N$, a \emph{partition} of $n$ is a weakly decreasing finite sequence $\lambda = (\lambda_1, \ldots, \lambda_\ell)$ of positive integers, such that $\sum_{i=1}^\ell \lambda_i = n$.
We call $n$ the \emph{size} of $\lambda$, which we express by writing 
\[\lambda \vdash n \quad \mbox{ or } \quad |\lambda|=n. \]
We will often use exponents to denote repeated integers in a partition; for example, $(4^3,2,1^2)$ is shorthand for $(4,4,4,2,1,1)$.
The \emph{Young diagram of shape $\lambda$} is the left-justified array with $\lambda_i$ boxes in its $i$th row, counting from the top. 
As an example, the Young diagram of shape $\lambda = (3,2,1,1)$ is given in Figure~\ref{subfig:YD}.
For brevity, we often refer to the ``rows/columns of $\lambda$'' or similar phrases, where it is understood that we are speaking of the Young diagram of shape $\lambda$. We index the boxes of a Young diagram by ordered pairs $(i,j)$ referring to the $i$th row and $j$th column.

Recall the \emph{conjugate partition} of $\lambda$, denoted by $\lambda' = (\lambda'_1, \ldots, \lambda'_m)$, where  $m = \lambda_1$, and where $\lambda'_j$ denotes the length of the $j$th column of the Young diagram of shape $\lambda$, or equivalently
    \[
    \lambda'_j= |\{i \mid \lambda_i \geq j\}|.
    \]
For example, if $\lambda = (3,2,1,1)$ is the partition with Young diagram shown in Figure~\ref{subfig:YD}, then we have $\lambda' = (4,2,1)$.

For $\lambda \vdash n$, a \emph{standard Young tableau of shape $\lambda$} is a Young diagram of shape $\lambda$, where the boxes are filled with distinct entries from the alphabet $[1,n]$ such that every row and column is increasing.
See Figure~\ref{subfig:SYT} for an example.
Let 
\[ \SYT(\lambda)\]
denote the set of all standard Young tableaux of shape $\lambda$. If $S\in\SYT(\lambda)$, we write $S_{i,j}$ for the entry in the $(i,j)$-th box of $S$. Occasionally, we will also write $|S|=n$ when $\lambda \vdash n$.

More generally, if $A\subseteq \Z$, we write 
\[ \SYT(\lambda, A) \] 
for the set of Young diagrams of shape $\lambda$ where the boxes are filled with distinct entries from the alphabet $A$ such that every row and column is increasing, and we refer to the elements of $\SYT(\lambda, A)$ as \emph{standard Young tableaux of shape $\lambda$ with entries from $A$}.

\begin{figure}[H]
     \centering
     \ytableausetup{boxsize=1.6em} 
     \begin{subfigure}[b]{0.4\textwidth}
         \centering
         \ydiagram{3,2,1,1}
         \caption{The Young diagram of shape $\lambda$.}
         \label{subfig:YD}
     \end{subfigure}
     \qquad
     \begin{subfigure}[b]{0.4\textwidth}
         \centering
         \ytableaushort{137,24,5,6}
         \caption{An element of $\SYT(\lambda)$.}
         \label{subfig:SYT}
     \end{subfigure}
     \caption{Example where $\lambda = (3,2,1,1)$.
     Note that $\lambda' = (4,2,1)$.}
     \label{fig:YD and SYT}
\end{figure}


For $n\in \N,m\in\Z$, $\lambda \vdash n$, and $S\in\SYT(\lambda, \Z)$, we define the \emph{$m$-shift} of $S$, written
\[ S+m, \]
to be the standard Young tableau obtained from $S$ by adding $m$ to each entry of $S$. 

\subsection{Skew Tableaux}

If $n,n'\in \N$, $\lambda \vdash n$, and $\nu \vdash n'$ with $\nu_i \leq \lambda_i$ for all $1\leq i \leq \nu_1'$, then $\lambda$ is said to \emph{contain} $\nu$.
Then the \emph{skew Young diagram of skew shape} $\sigma = \lambda / \nu$ consists of the set of boxes belonging to the Young diagram of $\lambda$ but not to that of $\nu$. 
In this context $\nu$ may be referred to as the \emph{inner partition}.
Write $\sigma_i = \lambda_i - \nu_i$ for the length of the $i$th row of $\sigma$, where we set $\nu_i=0$ for all $i > \nu_1'$, and let $|\sigma|=\sum_i \sigma_i$. We say that $\sigma$ is \emph{weakly decreasing} if $\sigma_i \geq \sigma_{i+1}$ for all $i$. We write $\sigma' = \lambda' / \nu'$ for the \emph{conjugate skew Young diagram} and write
\[ \ell(\sigma) = \max\{i \,|\, \lambda_i > \nu_i \} \]
for the last index where the two partitions $\lambda$ and $\nu$ differ, i.e., the overall height of $\sigma$ (possibly counting intermediate empty rows).

Further, we write
$\SYT(\sigma, A)$ for the set of skew Young diagrams of shape $\sigma$ where the boxes are filled with distinct entries from the alphabet $A$ such that every row and column is increasing. We refer to the elements of $\SYT(\sigma, A)$ as \emph{standard skew Young tableaux of shape $\sigma$ with entries from $A$}. If $A=[1,|\sigma|]$, we may choose to simply write $\SYT({\sigma})$. 
As with tableaux, 
for $m\in\Z$ and $S\in\SYT(\sigma, \Z)$, we define the \emph{$m$-shift} of $S$, written $S+m$,
to be the standard skew Young tableau obtained from $S$ by adding $m$ to each entry of $S$. 

Finally, if $\sigma(j)$ are skew diagrams and $S(j) \in \SYT(\sigma(j), \N)$ for $j=1,2$, we say that $S(1)$ and $S(2)$ are \emph{row equivalent}, written $S(1) \sim S(2)$, if, for all $i$, 
the set of entries in the $i$th rows of $\sigma(1)$ and of $\sigma(2)$ coincide. Note that this requires
$\sigma(1)_i = \sigma(2)_i$ for all $i$.



\begin{figure}[H]
    \centering
    \ytableausetup{boxsize=1.6em} 
    \begin{subfigure}[b]{0.42\textwidth}
        \centering
        \begin{ytableau}
            *(lightgray) & *(lightgray) &  &  &  \\
            *(lightgray) &  &  &  \\
             &  \\
             \,
        \end{ytableau}
        \caption{Skew Young diagram $\sigma=\lambda/\nu$ with $\lambda=(5,4,2,1)$ and $\nu=(2,1)$ (shaded).}
        \label{subfig:skew-diagram-shaded}
    \end{subfigure}
    \hfill
    \begin{subfigure}[b]{0.42\textwidth}
        \centering
        \begin{ytableau}
            \none & \none & 1 & 4 & 7 \\
            \none & 2 & 5 & 9 \\
            3 & 6 \\
            8
        \end{ytableau}
        \caption{An element of $\SYT(\sigma)$.}
        \label{subfig:skew-SYT}
    \end{subfigure}
    \caption{A skew Young diagram with shaded inner partition and a corresponding standard skew Young tableau.}
    \label{fig:skew-and-SYT-shaded}
\end{figure}

         

Given $S\in\SYT(\lambda, \Z)$  or $S\in \SYT(\sigma,\Z)$, the \emph{Knuth word} or \emph{reading word} of
$S$ is the word given by concatenating the rows of $S$ from the bottom row to the top row, i.e.,
by reading off the entries of each row of $S$, left-to-right, from bottom-to-top. 
As an example, the Knuth word assigned to the standard skew Young tableau in Figure~\ref{subfig:skew-SYT} is $836259147$.
An \emph{elementary Knuth transformation} consists of applying either the equivalence $yzx \equiv yxz$ for $x<y\leq z$ or $xzy \equiv zxy$ for $x \leq y < z$ to three consecutive letters of a Knuth word, though, in our setting we will always have strict inequalities. 
Two words are called \emph{Knuth equivalent} if one can be obtained from the other by a sequence of elementary Knuth transformations.

\subsection{Infinite Tableaux}

In this paper, we consider three types of infinite generalizations of shapes, Young diagrams, and Young tableaux. For $l\in\N$, we define the following.
\begin{enumerate}
    \item The \emph{infinite Young diagram of shape} $(\infty^l)=(\overbrace{\infty, \ldots, \infty}^{l})$ is the left-justified array with boxes indexed by $[1, l] \times \N$, i.e., $l$ countably infinite rows stretching to the right.
    \item The \emph{infinite Young diagram of shape} $(l^\infty) = (l,l,l,\ldots)$ is the left-justified array with boxes indexed by $\N \times [1, l]$, i.e., $l$ countably infinite columns stretching downwards.
    \item  The \emph{infinite Young diagram of shape} $(\infty^\infty)=(\infty,\infty,\infty,\ldots)$ is the left-justified array with boxes indexed by $\N \times \N$, i.e., countably infinite rows and columns stretching to the right and downwards.
\end{enumerate}
We call the shapes defined above \emph{infinite shapes}, and we define an \emph{infinite standard Young tableau} of shape $\mu$ to be a bijective $\N$-filling (i.e., every positive integer appears exactly once as entry) of all boxes of the infinite Young diagram $\mu$ so that every row and column is increasing. We write 
\[\SYT(\mu)\] 
for the set of all infinite standard Young tableaux of shape $\mu$. 



\begin{figure}[H]
    \centering
    \ytableausetup{boxsize = 1.6em}

    \begin{subfigure}[b]{0.4\textwidth}
        \centering
        \withlines{
        \begin{ytableau}
            1 & 2 & 6 & 8 & \none[\cdots]\\
            3 & 4 & 7 & 11 & \none[\cdots]\\
            5 & 9 & 10 & 12 & \none[\cdots]
        \end{ytableau}}
        \caption{An element of $\SYT((\infty^3))$.}
        \label{subfig:Horizontal}
    \end{subfigure}
    \begin{subfigure}[b]{0.4\textwidth}
        \centering
        \withlinesvert{
        \begin{ytableau}
            1 & 3 & 4\\
            2 & 5 & 6\\
            \none[\vdots] & \none[\vdots] & \none[\vdots]
        \end{ytableau}}
        \caption{An element of $\SYT((3^\infty))$.}
        \label{subfig:Vertical}
    \end{subfigure}

    \vspace{1em} 

    \begin{subfigure}[c]{0.4\textwidth}
        \centering
        \withlinesboth{
        \begin{ytableau}
            1 & 3 & 4 & \none[\cdots]\\
            2 & 6 & 7 & \none[\cdots]\\
            5 & 9 & 13 & \none[\cdots]\\
            \none[\vdots] & \none[\vdots] & \none[\vdots] & \none[\ddots]
        \end{ytableau}}
        \caption{An element of $\SYT((\infty^\infty))$.}
        \label{subfig:Bi-infinite}
    \end{subfigure}

    \caption{Examples of infinite standard Young tableaux.}
    \label{fig: Infinite SYT}
\end{figure}

As with finite Young diagrams, one can analogously define \emph{conjugate partitions} of infinite Young diagrams, and we use similar notation. For example, if $\mu =(\infty^l)$, then $\mu' = (l^\infty)$.
Similarly, if $T\in \SYT{((\infty^l))}$, then $T' \in \SYT{((l^\infty))}$ denotes the \emph{conjugate tableau} (transpose) of $T$ which results from flipping the boxes and entries of $T$ along its main diagonal.
The map $\Phi: \SYT{((\infty^l))} \to \SYT{((l^\infty))}$ given by $\Phi(T)=T'$ is a bijection, and it will be straightforward to show that $\Phi \circ \jdt = \jdt \circ \Phi$ with respect to jeu de taquin.
Hence, as statements about the infinite shape $(l^\infty)$ can be obtained from corresponding statements about $(\infty^l)$, we will focus on the latter in this paper. 

Suppose $n,l\in\N$, $\lambda \vdash n$, $S\in\SYT(\lambda)$, $\mu$ is a finite or infinite shape, and $T\in\SYT(\mu)$. 
We say that $\mu$ \emph{contains} $\lambda$ if $\lambda_i \leq \mu_i$ for $1\leq i \leq \lambda_1'$. For $\mu$ of the form $(\infty^\infty)$, there is no constraint. For $\mu$ of the form $(\infty^l)$, the only constraint is $\lambda_1' \leq \mu'_1 = l$.
In the case that $\mu$ contains $\lambda$ and $T_{i,j} = S_{i,j}$ for all boxes $(i,j)$ of $\lambda$, we say that $T$ \emph{contains} $S$, written $T|_\lambda = S$, and we may refer to this by either saying that $T$ \emph{restricts} to $S$ or that $T$ is an \emph{extension} of $S$. 
We use similar notation for a skew shape $\sigma = \lambda / \nu$ with $\nu$ contained in $\lambda$, $\lambda$ contained in $\mu$, and $S \in \SYT(\sigma, A)$. Finally, for $\mu$ an infinite shape,  $T\in\SYT(\mu)$, and $a,b\in\N$ with $a\le b$, the standard skew Young tableau $T[a,b]$ results from restricting the tableau $T$ to the boxes with entries from $[a,b]$.



\subsection{The Left Smash}

The following construction will be used repeatedly throughout this paper when studying infinite standard Young tableaux of shape $(\infty^l)$. The terminology arises from imagining a sequence of standard Young tableaux placed successively to the right, each one having its entries incremented by the total size of the previous tableaux and then being left-justified by sliding (smashing) everything to the left.


\begin{definition}[\emph{Left Smash} for $(\infty^l)$] \label{def: smash infty^l}
    Fix $l\in\N$ and the infinite shape $\mu = (\infty^l)$.
    For each $k\in\N$, let
    \begin{itemize}
        \item $n_k\in\N$,
        \item $\lambda(k) \vdash n_k$,
        \item $S(k) \in \SYT(\lambda(k))$
    \end{itemize}
    with an infinite number of $\lambda(k)$ having maximal height, i.e., $|\{ k \in \N \mid \lambda(k)_1' = l \}| = \infty$.
   
    Iteratively define $T\in\SYT(\mu)$, written
        \[ T = \biglsmash_{k=1}^\infty S(k),\]
    as follows. Begin by inserting $S(1)$ into $T$ by defining 
    \[ T_{i,j}=S(1)_{i,j} \]
    for all boxes $(i,j)$ in $\lambda(1)$. 

    Next, insert $S(2)$ into $T$ by incrementing the entries of $S(2)$ by $n_1$ and then placing them to the right of where $S(1)$ was already inserted. That is, define
     \[ T_{i, \, 
          \lambda(1)_i + j}   =   S(2)_{i,j} + n_1\]
    for all boxes $(i,j)$ in $\lambda(2)$.
    
    In general, assuming $S(1), \ldots, S(k-1)$ have already been inserted into $T$, insert $S(k)$ by defining
    \[ T_{i, \, 
          \lambda(1)_i+\ldots+\lambda(k-1)_i + j}   =   S(k)_{i,j} + n_1 + \ldots + n_{k-1}\]
    for all boxes $(i,j)$ in $\lambda(k)$. 
    
    If the above procedure is stopped at $k=k_0$, we write
    \[ \biglsmash_{k=1}^{k_0} S(k) \qquad \text{or} \qquad S(1) \lsmash \ldots \lsmash S(k_0)\]
    for the resulting standard Young tableau.
\end{definition}

Note that in Definition \ref{def: smash infty^l}, the condition $|\{ k \in \N \mid \lambda(k)_1' = l \}| = \infty$ ensures that the left smash results in a complete filling of all of $\mu$, and filling in a sequence of standard Young tableaux $S(k)$ ensures that the incrementing results in a standard Young tableau $T$ with increasing columns.

As an illustration, consider the example in Figure \ref{figure: left smash example} where $l = 3$.
\begin{figure}[H] 
    \centering
    \ytableausetup{boxsize = 1.6em}
    \[
    S(1) = \ytableaushort[*(gray)]{12,34,5}\,\,\,\,\,\,\,\,\,\, 
    S(2) = \ytableaushort[*(lightgray)]{13,2,4}\,\,\,\,\,\,\,\,\,\, 
    S(k) = \ytableaushort{13,25,4} \text{ for } k\geq 3.
    \]

    \vspace{0.8em} 

    \[
    \biglsmash_{k=1}^\infty S(k) = 
    \withlines{%
        \begin{ytableau}
            *(gray) 1 & *(gray) 2 & *(lightgray) 6 & *(lightgray) 8 & 10 & 12 & \none[\cdots]\\
            *(gray) 3 & *(gray) 4 & *(lightgray) 7 & 11 & 14 & 16 & \none[\cdots]\\
            *(gray) 5 & *(lightgray) 9 & 13 & 18 & 23 & 28 & \none[\cdots]
        \end{ytableau}%
    }
    \]
    \caption{Example of left smash for $(\infty^3)$.}
    \label{figure: left smash example}
\end{figure}



\begin{remark}[Extension of left smash to skew shapes] \label{rem: can smash with skew too}
    In Definition \ref{def: smash infty^l}, we may generalize the construction to allow smashing with standard skew Young tableaux under slightly adjusted requirements. Namely, for each $k\in \N$, let
    \begin{itemize}
    \item $n_k\in\N$,
        \item $\lambda(k)$ is a skew Young diagram with $|\lambda(k)|=n_k$ such that $|\{ k \in \N \mid \ell(\lambda(k)) = l \}| = \infty$,
        \item $S(k) \in \SYT(\lambda(k))$,
        \item when $S(k)$ is placed to the right of the standard skew Young tableau 
        \[ \biglsmash_{i=1}^{k-1} S(i),\] 
    the skew tableau resulting from the left smash of $S(k)$ is still standard. 
    In other words, each partial smash is a standard skew Young tableau.
    Note that for $k=1$, our adjusted instruction requires that $S(1)$ is placed within the shape $(\infty^l)$ and left justified before continuing with the left smash of $S(2)$.
    \end{itemize}
    We note that if $\sigma$ is a weakly decreasing skew shape with $S\in\SYT(\sigma)$, the choice of $\lambda(k) = \sigma$ and $S(k) = S$ for all $k\in \N$ satisfies the above condition for all $k\in \N$ provided that it is satisfied at $k=1$. Namely, all that is required is that left justification of $S$ results in a standard Young tableau. 
\end{remark}

\begin{remark}[Extension of left smash to infinite Young tableaux] \label{rem: smash variant tableau and infinite tableau}
    In Definition \ref{def: smash infty^l}, we may modify the construction to allow smashing between a standard Young tableau $S\in\SYT(\lambda)$ with $\lambda'_1\leq l$ and an infinite standard Young tableau $T\in\SYT((\infty^l))$, written
    \[ S \lsmash T.\]
    We also use the convention that $\varnothing \lsmash T = T$. 
\end{remark}

\begin{remark} \label{rem: uncountable SYT}
    By repeatedly smashing with two distinct standard Young tableaux from $\SYT((2^l))$ and using a standard binary coding argument, we see that $\SYT((\infty^l))$ and $\SYT((l^\infty))$ are uncountable for all $l\ge 2$ (while $\SYT((\infty^1))$ and $\SYT((1^\infty))$ are singletons).
\end{remark}

\subsection{The $\prom$ Algorithm}

There is a natural operation on tableaux using the process known as \emph{jeu de taquin} (introduced in \cite{Schutzenberger-Promotion-Original} and \cite{Schutzenberger1977} in the context of rectifying semistandard skew Young tableaux) that generalizes to infinite shapes. 
The procedure is the same, whether on a finite or infinite standard Young tableau, $T$.
The algorithm begins by deleting the box of $T$ with the entry $1$, leaving an empty space. Next, inductively iterate the following procedure: if there exists a box either directly to the right or directly below the empty space, slide the box with the smaller entry (which leaves a new empty space) into the position of the empty space. 
On a finite standard Young tableau, this procedure terminates at what we will shortly call an outer corner, resulting in a smaller tableau. 
On an infinite standard Young tableau, the procedure produces an infinite number of slides, and the resulting tableau has the same shape. 
Finally, subtract $1$ from each entry. The resulting tableau is denoted by 
\[ \prom(T). \]
This process preserves the property of being a standard Young tableau. An example in the infinite case is given in Figure \ref{fig:infinite promotion example}. 
We will also write $\prom^0(T) = T$ and, for a finite standard Young tableau, 
$\prom^{|T|}(T) = \varnothing$. Finally, if $S$ is a standard Young tableau or standard skew Young tableau contained in $T$, 
we write
\[ \prom_T(S) \]
for the subtableau or skew subtableau of $\prom(T)$ that results from tracing the movement of the boxes in $S$ when the operator $\prom$ is applied to $T$. 
\begin{figure}[H]
    \centering
    \begin{align*}
     \mathrel{\raisebox{-2ex}{$T$}} &\mathrel{\raisebox{-2ex}{=}} 
    \withlines{\begin{ytableau}
        1 & 4 & 7 & 10 & \none[\cdots]\\
        2 & 5 & 8 & 11 & \none[\cdots]\\
        3 & 6 & 9 & 12 & \none[\cdots]
    \end{ytableau}}
    &\mathrel{\raisebox{-2ex}{$\rightarrow$}}\;
    \withlines{\begin{ytableau}
         *(lightgray) 2 & 4 & 7 & 10 & \none[\cdots]\\
         *(lightgray)3 & 5 & 8 & 11 & \none[\cdots]\\
         *(lightgray)6 &  *(lightgray)9 &  *(lightgray)12 &  *(lightgray)15 & \none[\cdots]
    \end{ytableau}}
    &\mathrel{\raisebox{-2ex}{$\rightarrow$}}\;
    \withlines{\begin{ytableau}
        1 & 3 & 6 & 9 & \none[\cdots]\\
        2 & 4 & 7 & 10 & \none[\cdots]\\
        5 & 8 & 11 & 14 & \none[\cdots]
    \end{ytableau}}
    \mathrel{\raisebox{-2ex}{=}}& \mathrel{\raisebox{-2ex}{$\jdt(T)$}}
    \end{align*}
    \caption{$\jdt$ on an infinite standard Young tableau of shape $(\infty^3)$.}
    \label{fig:infinite promotion example}
\end{figure}


A notion of the jeu de taquin algorithm is also defined on standard skew Young tableaux. Let $\sigma = \lambda / \nu$ and $S\in\SYT(\sigma, A)$. An \emph{inner corner of $\sigma$} (not actually a box of $\sigma$) is any maximally southeastern box of $\nu$, meaning there are no boxes in $\nu$ to the right or below an inner corner. Note that such a box is also called an \emph{outer corner of $\nu$}.

Beginning with any inner corner of $\sigma$, viewed as the initial empty space, a \emph{jeu de taquin slide} or \emph{slide} is performed on $S$ by inductively iterating the following procedure: if there exists a box of $S$ either directly to the right or directly below the empty space, slide the box with the smaller entry (which leaves a new empty space) into the position of the empty space. This procedure will terminate after a finite number of steps and produce a standard (generally skew) Young tableau of the same size, but of a different shape. We write $\prom(S)$ for the resulting tableau and note that $\prom(S)$ depends on the initial choice of inner corner. 

By iteratively applying slides, it is possible to arrive at a standard Young tableau. 
This tableau is called the \emph{rectification of $S$} and two standard skew Young tableaux with the same rectification are called \emph{jeu de taquin equivalent}. It is well known that the rectification of $S$ is independent of the choice and order of slides and that jeu de taquin equivalence coincides with Knuth equivalence of the tableaux's reading words. As these facts are repeatedly required in Section \ref{sec: periodic}, we record them as a theorem for later use.

\begin{theorem}[\cite{fulton1997young}] \label{thm: Uniqueness of Rectified Skew Tableau}
    Let $\sigma$ be a skew shape and $S\in\SYT(\sigma, \N)$. The rectification of $S$ is independent of the choice and order of jeu de taquin slides, and two standard skew Young tableaux have the same rectification if and only if their reading words are Knuth equivalent.
\end{theorem}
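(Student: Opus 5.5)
The plan is the classical route through Knuth equivalence and Schensted insertion, in the spirit of \cite{fulton1997young}. The argument rests on two independent facts. First, a single jeu de taquin slide on a standard skew Young tableau does not change the Knuth equivalence class of its reading word. Second, every Knuth equivalence class of words with distinct letters contains the reading word of exactly one standard Young tableau of straight (non-skew) shape. Granting these, the theorem follows quickly. Apply any sequence of slides to $S$ until a straight shape is reached. By the first fact, the reading word of the result is Knuth equivalent to that of $S$. By the second fact, the result is therefore the unique straight tableau in that class, no matter which inner corners were chosen or in what order. The same reasoning gives the ``if and only if'': two skew tableaux have the same rectification exactly when their reading words lie in the same Knuth class.

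For the first fact, I would reduce to individual moves. A horizontal move within a row does not change the reading word at all. So only vertical moves matter, where an entry slides up from row $i+1$ into the empty space in row $i$. Such a move changes the reading word only within the segment consisting of the part of row $i+1$ and the part of row $i$ lying between the current hole and the end of the slide region. I would check directly that relocating the moving letter from its spot in the lower-row block to its new spot in the upper-row block is a composite of elementary Knuth transformations of the two forms $yzx\equiv yxz$ and $xzy\equiv zxy$. The standard organisation of this check is to treat the two rows locally as a two-row skew tableau. One then proves the statement for two-row shapes by induction on row lengths, commuting the moving letter past its neighbours one at a time. At each commutation, the row and column increasing conditions supply the needed witness letter. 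I expect this bookkeeping to be the main obstacle: one must confirm that at every step a witness exists with the correct inequalities, including at the boundaries of the two rows.

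For the second fact, I would use Schensted row insertion. Let $P(w)$ denote the insertion tableau of a word $w$. I would establish two lemmas. The first is that if $w$ is the reading word of a straight tableau $T$, then $P(w)=T$. This is checked by inserting the rows bottom to top and observing that each new row bumps the existing rows down intact. The second is that $P(w)$ depends only on the Knuth class of $w$, i.e.\ $P$ is invariant under each elementary Knuth transformation. This is the standard local verification that inserting $yzx$ versus $yxz$ (or $xzy$ versus $zxy$) into a row yields the same tableau. To complete the argument I also need that $w$ is Knuth equivalent to the reading word of $P(w)$. This follows by showing that a single row insertion $T\leftarrow x$ corresponds, on reading words, to a chain of elementary Knuth transformations; it is again a row-by-row check. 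Uniqueness then holds: if two straight tableaux have Knuth-equivalent reading words, applying $P$ and the first lemma shows they are equal.
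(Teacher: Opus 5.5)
The paper gives no proof of this statement; it quotes it from \cite{fulton1997young}, so the comparison is with Fulton's argument. Your architecture is Fulton's: a single slide preserves the Knuth class of the reading word, and a Knuth class contains the word of at most one tableau of straight shape. You differ in how you prove the second fact. Fulton uses the Greene-type invariants $L(w,k)$, the maximal total length of $k$ disjoint increasing subsequences of $w$. He shows they are constant on Knuth classes and equal $\lambda_1+\dots+\lambda_k$ on the word of a tableau of shape $\lambda$, which fixes the shape, and then concludes by induction after deleting the largest letter. You instead use Knuth's original argument that the insertion tableau $P(w)$ is unchanged by elementary Knuth transformations, combined with $P(w(T))=T$. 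Your route stays within local computations with row insertion and needs neither the increasing-subsequence machinery nor the deletion lemma. Fulton's route avoids the recursive analysis of insertion and yields Greene's theorem as a by-product.

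Two small points about your write-up. First, the existence half, $w\equiv w(P(w))$, is not needed for this theorem. The rectification itself supplies a straight tableau in the Knuth class of $w(S)$, and only ``at most one'' is used. Second, the invariance of $P$ must be checked for insertion into an arbitrary tableau, since the Knuth move can sit in the middle of the word after $P$ of the prefix is built. This requires induction on the number of rows; a one-row computation is not enough. For example, inserting $3,5,1$ versus $3,1,5$ into the row $(2,4,6)$ gives the same new row $(1,3,5)$. The bumped words, however, are $4\,6\,2$ and $4\,2\,6$: they differ, related only by another elementary Knuth transformation, which must then be handled in the next row.
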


We note that jeu de taquin slides and rectification do not include decrementing entries as a final step. When we wish to decrement with each jeu de taquin slide in a rectification, we will refer to the result as the \emph{decremented rectification}.

\section{Basic Results} \label{sec: basic}






For an infinite shape $\mu$, 
note that $\jdt : \SYT(\mu) \rightarrow \SYT(\mu)$ is surjective:
if $T \in \SYT(\mu)$ with $\mu =(\infty^l)$ or $(\infty^\infty)$, it is straightforward to verify that $U\in\prom^{-1}(T)$ where $U$ is defined by
    \[
    U_{i,j} = 
    \begin{cases}
        1, & \text{if } (i,j)=(1,1), \\     
        T_{1,j-1} + 1, & \text{if } i = 1 \text{ and } j \geq2, \\   
        T_{i, j} + 1 & \text{if } i \geq 2.
    \end{cases}
    \]
However, it is not injective as Figure \ref{fig: jdt not inj} shows. 
    \begin{figure}[H]
    \[
    S =
    \withlines{\begin{ytableau}
        1 & 2 & 3 & 5 & 7 & 9 & \none[\cdots]\\
        4 & 6 & 8 & 10 & 12 & 14 & \none[\cdots]
    \end{ytableau}} \hspace{.2cm}
    \hspace{.2cm}
    T =
    \withlines{\begin{ytableau}
        1 & 3 & 5 & 7 & 9 & 11 & \none[\cdots]\\
        2 & 4 & 6 & 8 & 10 & 12 & \none[\cdots]
    \end{ytableau}}
    \]

    \vspace{0.8em} 

    \[
    \prom(S) = \prom(T) =
    \withlines{\begin{ytableau}
        1 & 2 & 4 & 6 & 8 & 10 & \none[\cdots]\\
        3 & 5 & 7 & 9 & 11 & 13 & \none[\cdots]
    \end{ytableau}}
    \]
    \caption{Non-injectivity of $\jdt$.}
    \label{fig: jdt not inj}
\end{figure}  
We will see below, in Theorem \ref{Thm: Size of Preimage} and 
Remark \ref{rmk: Upper Bound is Sharp}, 
that the size of the preimage of a standard Young tableau under $\prom$ can be infinite for the infinite shape $(\infty^\infty)$, but is bounded by $l$ for the infinite shapes $(\infty^l)$ and $(l^\infty)$. Towards that end, we introduce the following definition from \cite{RomikSniady2015}.



\begin{definition} \label{Def: Path}
    Let $\mu$ be an infinite shape and $T \in \SYT(\mu)$.
    The \emph{sliding path of $T$} is the sequence of all the positions of $T$ where boxes got either deleted or moved during the $\prom$ algorithm. 
    
    More precisely, the sliding path   
    is the sequence of positions of $T$, 
    \[ \operatorname{p}(T) = (\operatorname{p}_1(T),\allowbreak \operatorname{p}_2(T),\allowbreak \operatorname{p}_3(T),\allowbreak  \ldots) \]
    recursively defined as follows:
    begin by setting $\operatorname{p}_1(T) = (1,1)$.     
    If the position $\operatorname{p}_n(T)$ is defined and there exists a box either directly to the right or directly below it, then 
    $\operatorname{p}_{n+1}(T)$ is the position of the box with the smaller entry.

    We say that the sliding path $\operatorname{p}(T)$ \emph{eventually becomes horizontal} if 
    $\operatorname{p}_n(T)$ remains on a fixed row for all sufficiently large $n\in\N$. In this case, we say that it \emph{becomes horizontal in row} $i_0$ if $\operatorname{p}_n(T)$ remains in row $i_0$ for all sufficiently large $n\in\N$. Additionally, we say that $\operatorname{p}(T)$ \emph{becomes horizontal at column} $j_0$ if the positions $\operatorname{p}_n(T)$ in columns $j$ with $j\geq j_0$ are all in a fixed row.
    
\end{definition}

See Figure \ref{fig: example prom path} for an example of Definition \ref{Def: Path}.
Also observe that, \[\mbox{if $\operatorname{p}_n(T) = (i,j)$, then $i+j=n+1$.}\]
In particular, for each $n\geq 2$, there is a unique box $(i,j)$ in the sliding path with $i+j=n$.
\begin{figure}[H]
    \centering
    \begin{tikzpicture}[remember picture]

      \node {
        \( \mathrel{\raisebox{-2ex}{$T=$}}
          \withlines{\begin{ytableau}
            \tikz[remember picture,baseline=(A.base)]\node[inner sep=2pt](A){1}; & 
            \tikz[remember picture,baseline=(B.base)]\node[inner sep=2pt](B){2}; & 
            \tikz[remember picture,baseline=(C.base)]\node[inner sep=2pt](C){4}; & 
            7 & 11 & \none \\
            3 & 5 & 
            \tikz[remember picture,baseline=(D.base)]\node[inner sep=2pt](D){6}; & 
            \tikz[remember picture,baseline=(E.base)]\node[inner sep=2pt](E){9}; & 
            14 & \none[\dots] \\
            8 & 10 & 12 & 
            \tikz[remember picture,baseline=(F.base)]\node[inner sep=2pt](F){13}; & 
            \tikz[remember picture,baseline=(G.base)]\node[inner sep=2pt](G){15}; & \none
          \end{ytableau}}
        \)
      };

      \begin{pgfonlayer}{background}
        \draw[line width=2pt, lightgray, -{latex}]
          (A.center) --
          (B.center) --
          (C.center) --
          (D.center) --
          (E.center) --
          (F.center) --
          (G.center) -- ([xshift=5pt]G.east);

        \fill[lightgray] (A.center) circle (3pt);
      \end{pgfonlayer}
    \end{tikzpicture}

    \caption{Sliding path for $T \in \SYT((\infty^3))$.}
    \label{fig: example prom path}
\end{figure}

We note a quick observation about sliding paths for the infinite shape $(\infty^l)$ that follows from the fact that, at each stage, sliding paths can only move to the right or down.

\begin{lemma} \label{lem: evenutal horizontal}
    For all $l\in\N$, $\mu = (\infty^l)$, and $T \in \SYT(\mu)$, the sliding path $\operatorname{p}(T)$ eventually becomes horizontal.
\end{lemma}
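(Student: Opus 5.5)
The plan is to track the row index of the sliding path and show that it stabilizes. Write $\operatorname{p}_n(T) = (i_n, j_n)$. First I will check that the sliding path is infinite. For $\mu = (\infty^l)$, every position $(i,j)$ has a box directly to its right, namely $(i,j+1)$. So the recursion in Definition \ref{Def: Path} never terminates, and $\operatorname{p}_n(T)$ is defined for all $n \in \N$.

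By the definition, each step moves either one box to the right or one box down. Hence $i_{n+1} \in \{i_n, i_n+1\}$ for every $n$, so the sequence $(i_n)_{n\in\N}$ is weakly increasing. It is also bounded above by $l$, since every box of $(\infty^l)$ lies in one of the rows $1, \ldots, l$. A weakly increasing, integer-valued sequence bounded by $l$ can strictly increase at most $l-1$ times. Therefore there is an $N$ with $i_n = i_N =: i_0$ for all $n \ge N$, which says exactly that $\operatorname{p}(T)$ becomes horizontal in row $i_0$.

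For completeness, I will also record the column version. After step $N$ every move is to the right, so the positions $\operatorname{p}_n(T)$ with $n \ge N$ all lie in row $i_0$ and have strictly increasing columns. Put $j_0 := j_N$; by the relation $i+j = n+1$ noted earlier, this equals $N + 1 - i_0$. Every position of the path in a column $j \ge j_0$ then lies in row $i_0$. Otherwise some earlier position, with index $n < N$, would sit in a column $\ge j_0$, contradicting the fact that columns are weakly increasing along the path. So the path becomes horizontal at column $j_0$.

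There is no real obstacle here. The only points needing care are that the path is infinite (because rows are unbounded to the right) and that the row index is monotone and bounded; both follow directly from the shape $(\infty^l)$ and the right-or-down nature of each slide.
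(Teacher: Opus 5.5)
Your argument for the lemma is correct and matches the paper's own justification: each slide moves right or down, so the row index is weakly increasing and bounded by $l$, hence eventually constant. One slip in your optional column version: with $j_0 := j_N$ the path may have entered $(i_0,j_N)$ from $(i_0-1,j_N)$ (this always happens when $N>1$ is chosen minimal), so weak monotonicity of columns gives no contradiction and the column condition fails at $j_0$; taking $j_0 := j_N+1$ fixes it.
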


We begin with results on the behavior of elements of $\prom^{-1}$ and their sliding paths. The first follows immediately from the definition of $\prom$.

\begin{lemma} \label{lem: prom path dets inverse image equality}
    Let $\mu$ be an infinite shape and $T \in \SYT(\mu)$.
    If $U,V \in \prom^{-1}(T)$, then $U = V$ if and only if they have the same sliding paths.
\end{lemma}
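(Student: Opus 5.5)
The forward direction is trivial: if $U=V$, then the sliding paths are defined from the same tableau by the same deterministic rule, so they coincide. The content lies in the converse. There the plan is to show that $U$ can be reconstructed explicitly from $T=\prom(U)$ and the sliding path $\operatorname{p}(U)$, by running the $\prom$ algorithm backwards along the path.

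Set $\operatorname{p}(U)=(\operatorname{p}_1,\operatorname{p}_2,\ldots)$ with $\operatorname{p}_1=(1,1)$. Unwinding the definition of $\prom$, one slide at a time, we get two facts.
\begin{itemize}
\item Every box not on the path keeps its entry except for the final decrement. So $U_{i,j}=T_{i,j}+1$ for $(i,j)\notin\{\operatorname{p}_n\}$.
\item During the $n$th slide, the box at $\operatorname{p}_{n+1}$ moves into the empty space at $\operatorname{p}_n$. So $T_{\operatorname{p}_n}=U_{\operatorname{p}_{n+1}}-1$, which gives $U_{\operatorname{p}_{n+1}}=T_{\operatorname{p}_n}+1$ for all $n\geq 1$.
\end{itemize}
Finally, $U_{\operatorname{p}_1}=U_{1,1}=1$, because the box containing $1$ in any standard tableau (finite or infinite) is at $(1,1)$.

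For infinite shapes the path is infinite; this follows from Lemma~\ref{lem: evenutal horizontal} for $(\infty^l)$ and similarly in the other cases. Hence every box of $\mu$ is either on the path, and then determined by the second formula or by $U_{1,1}=1$, or off the path, and then determined by the first formula. So $U$ is a function of the pair $(T,\operatorname{p}(U))$. Now if $U,V\in\prom^{-1}(T)$ have $\operatorname{p}(U)=\operatorname{p}(V)$, applying the same reconstruction formulas gives $U_{i,j}=V_{i,j}$ for every box, so $U=V$.

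The only step needing care is checking that an entry not on the path is never moved, and that the path positions receive exactly the successor entries. This is immediate from the recursive definition of $\operatorname{p}(U)$: each slide moves precisely the box at the next path position and nothing else. No real obstacle is expected.
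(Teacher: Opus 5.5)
Your proof is correct and matches the paper, which simply says the lemma follows immediately from the definition of $\prom$. Your explicit reconstruction ($U_{1,1}=1$, $U_{\operatorname{p}_{n+1}}=T_{\operatorname{p}_n}+1$, and $U_{i,j}=T_{i,j}+1$ off the path) is exactly the formula the paper later uses to build preimages in the proof of Theorem~\ref{Thm: Size of Preimage}; the one quibble is that infiniteness of the path follows more directly from every box of an infinite shape having a neighbor to its right or below, rather than from Lemma~\ref{lem: evenutal horizontal}.
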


\begin{lemma} \label{lem: Paths Never Cross}
    Let $\mu$ be an infinite shape and $T \in \SYT(\mu)$.
    If $U,V \in \prom^{-1}(T)$ 
    and $\operatorname{p}_n(U) \neq \operatorname{p}_n(V)$ for some $n\in\N$, then $\operatorname{p}_m(U) \neq \operatorname{p}_m(V)$ for every $m \geq n$.
\end{lemma}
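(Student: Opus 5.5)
The plan is to argue by contradiction, using that a preimage $U\in\prom^{-1}(T)$ is determined by $T$ together with its sliding path. I would first record how $U$ and $T$ are related. Write $\operatorname{p}_k(U)$ for the sliding path of $U$. By definition of $\prom$, the box at $\operatorname{p}_{k+1}(U)$ slides into $\operatorname{p}_k(U)$, and then every entry is decremented. This gives two formulas:
\[
T_{\operatorname{p}_k(U)} = U_{\operatorname{p}_{k+1}(U)} - 1 \quad (k\ge 1), \qquad T_{i,j} = U_{i,j}-1 \ \text{ if } (i,j) \text{ is not on } \operatorname{p}(U).
\]
Since $\mu$ is infinite, the path never stops, so the first formula holds for every $k$. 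The same relations hold for $V$. I will also use the observation made after Definition~\ref{Def: Path}: the path contains exactly one box on each antidiagonal $i+j=n+1$, namely $\operatorname{p}_n$.

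Now suppose the paths differ at step $n$ but agree again at some later step. Let $m>n$ be the least index with $\operatorname{p}_m(U)=\operatorname{p}_m(V)=(i,j)$. Then $\operatorname{p}_{m-1}(U)\ne\operatorname{p}_{m-1}(V)$. Both of these boxes are predecessors of $(i,j)$, so each lies in $\{(i-1,j),(i,j-1)\}$. Being distinct, they are these two boxes. Without loss of generality, $\operatorname{p}_{m-1}(U)=(i-1,j)$ and $\operatorname{p}_{m-1}(V)=(i,j-1)$.

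The key step is to compare $T_{i,j-1}$ and $T_{i-1,j}$ using $U$ and then using $V$, and obtain opposite inequalities.

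For $U$, the first formula at $k=m-1$ gives $T_{i-1,j}=U_{i,j}-1$. The box $(i,j-1)$ lies on the same antidiagonal as $\operatorname{p}_{m-1}(U)=(i-1,j)$ but is different from it. By uniqueness per antidiagonal, $(i,j-1)$ is not on $\operatorname{p}(U)$, so $T_{i,j-1}=U_{i,j-1}-1$. Since $U$ is standard, $U_{i,j-1}<U_{i,j}$, and therefore $T_{i,j-1}<T_{i-1,j}$.

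For $V$, symmetrically, $T_{i,j-1}=V_{i,j}-1$. Also $(i-1,j)$ is off $\operatorname{p}(V)$, so $T_{i-1,j}=V_{i-1,j}-1$. Column-strictness of $V$ gives $V_{i-1,j}<V_{i,j}$, hence $T_{i-1,j}<T_{i,j-1}$.

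The two inequalities contradict each other, so no such $m$ exists, which proves the lemma. The only delicate point is justifying that the "other" predecessor box is off each path, and the antidiagonal uniqueness handles exactly that. No appeal to Lemma~\ref{lem: prom path dets inverse image equality} is needed.
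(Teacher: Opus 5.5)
Your proposal is correct and follows the same argument as the paper. Both take the minimal index $m>n$ where the paths meet again, note that the two predecessors must be $(i-1,j)$ and $(i,j-1)$, and derive opposite inequalities between $T_{i,j-1}$ and $T_{i-1,j}$ from $U$ and from $V$. Your antidiagonal-uniqueness remark makes explicit why the other predecessor box lies off each path, a step the paper leaves implicit.
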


\begin{proof}
    By way of contradiction, suppose there is a minimal $m > n$ such that $\operatorname{p}_m(U) = \operatorname{p}_m(V) = (i,j)$.
    Since $\operatorname{p}_{m-1}(U) \neq \operatorname{p}_{m-1}(V)$, we see that $\{\operatorname{p}_{m-1}(U), \operatorname{p}_{m-1}(V)\} = \{(i-1,j),(i,j-1)\}$.
    Possibly after relabeling, we may suppose $\operatorname{p}_{m-1}(U) = (i-1,j)$ and $\operatorname{p}_{m-1}(V) = (i,j-1)$.
    
    As $\prom(U) = T$ and $(i,j)$ is in the sliding path of $U$, it follows that
    $T_{i-1,j} = U_{i,j} - 1$.    
    As $\prom(U) = T$ and $(i,j-1)$ is not in the sliding path of $U$, it follows that 
    $T_{i,j-1} = U_{i,j-1} -1$. 
    Since $U_{i,j-1} < U_{i,j}$, it follows that 
    $T_{i,j-1} < T_{i-1,j}$. 
    Working with $V$ similarly, we see that
    $T_{i,j-1} = V_{i,j} - 1$ and $T_{i-1,j}=V_{i-1,j} -1$.
    Since $V_{i-1,j} < V_{i,j}$, it follows that
    $T_{i-1,j} < T_{i,j-1}$, a contradiction.
\end{proof}

It is actually possible to characterize the precise size of the preimage of an infinite tableau of shape $(\infty^l)$. It depends on the following definition and utilizes Lemma \ref{lem: Path to the Right}.

\begin{definition}\label{def: Dominated Entry}    
    Let $l\in\N$, $\mu=(\infty^l)$, and $T \in \SYT(\mu)$.
    An entry $T_{i,j}$ is \emph{dominated} if $T_{i,j} < T_{i-1,j+1}$.
\end{definition}


\begin{lemma}\label{lem: Path to the Right}
    Let $l,n\in\N$, $\mu=(\infty^l)$, $T \in \SYT(\mu)$, $U \in \prom^{-1}(T)$,
    and $\operatorname{p}_n(U) = (i,j)$.
    If $T_{i,j}$ is dominated, then $\operatorname{p}_{n+1}(U) = (i+1, j)$.
    Also, if $T_{i+1, j-1}$ is not dominated, then $\operatorname{p}_{n+1}(U) = (i, j+1)$.
\end{lemma}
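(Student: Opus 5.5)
The proof uses one bookkeeping fact about $\prom$ together with the antidiagonal structure of sliding paths, and handles each assertion by contrapositive.

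\textbf{Bookkeeping.} Write $\operatorname{p}_k = \operatorname{p}_k(U)$. During the $\prom$ algorithm on $U$, the entry sitting at $\operatorname{p}_{k+1}$ slides into position $\operatorname{p}_k$ and is then decremented. Every box off the sliding path keeps its entry and is only decremented. Since $\prom(U)=T$, this gives
\[
T_{\operatorname{p}_k} = U_{\operatorname{p}_{k+1}} - 1 \quad\text{for all } k, \qquad T_{a,b} = U_{a,b} - 1 \quad\text{for } (a,b) \text{ not on the path.}
\]
As observed after Definition~\ref{Def: Path}, the path contains exactly one box on each antidiagonal $a+b=n+1$. Since $\operatorname{p}_n=(i,j)$ lies on the antidiagonal $i+j=n+1$, the boxes $(i-1,j+1)$ and $(i+1,j-1)$, when they exist, are not on the path. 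Hence their $T$-entries are their $U$-entries minus $1$.

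\textbf{First assertion.} Suppose $T_{i,j}$ is dominated; in particular $i\ge 2$, so $(i-1,j+1)$ exists. Assume for contradiction that $\operatorname{p}_{n+1}=(i,j+1)$. Then $T_{i,j}=U_{i,j+1}-1$. Also $T_{i-1,j+1}=U_{i-1,j+1}-1$, because $(i-1,j+1)$ is off the path. The columns of $U$ increase, so $U_{i-1,j+1}<U_{i,j+1}$, and therefore $T_{i-1,j+1}<T_{i,j}$. This contradicts domination. The path always continues, since every row is infinite, so the only remaining possibility is $\operatorname{p}_{n+1}=(i+1,j)$. In the case $i=l$ the argument shows that a dominated $T_{l,j}$ cannot occur on the path at all, so the statement holds vacuously there.

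\textbf{Second assertion.} Suppose $T_{i+1,j-1}$ is not dominated, and assume for contradiction that $\operatorname{p}_{n+1}=(i+1,j)$. This requires $i+1\le l$. If $j=1$, then $(i+1,j-1)$ does not exist, and the hypothesis is read as imposing no condition on that box; I would make this convention explicit, and otherwise restrict to $j\ge 2$, where the box exists. Then $T_{i,j}=U_{i+1,j}-1$. Also $T_{i+1,j-1}=U_{i+1,j-1}-1$, because $(i+1,j-1)$ is off the path. The rows of $U$ increase, so $U_{i+1,j-1}<U_{i+1,j}$, and therefore $T_{i+1,j-1}<T_{i,j}$. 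By Definition~\ref{def: Dominated Entry}, this says exactly that $T_{i+1,j-1}$ is dominated, a contradiction. Hence $\operatorname{p}_{n+1}=(i,j+1)$.

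\textbf{Main obstacle.} The argument is a direct computation. The only delicate step is justifying that the neighbouring antidiagonal boxes are off the path, which follows from the uniqueness of one path box per antidiagonal. The rest is handling the boundary conventions at $i=1$, $i=l$ and $j=1$.
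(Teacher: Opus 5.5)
Your proof is correct and is essentially the paper's argument. Each assertion is proved by contrapositive: $(i-1,j+1)$ and $(i+1,j-1)$ lie off the sliding path, so their $T$-entries are their $U$-entries minus one, and you compare these with the $U$-entry that slid into $(i,j)$. Your antidiagonal justification and your $i=l$ remark only make explicit what the paper leaves implicit.

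One caution on $j=1$. The hypothesis must not be read as vacuously satisfied there, because the path can go down the first column: for example, $U_{1,1}=1$, $U_{2,1}=2$, $U_{1,2}=3$ gives $\operatorname{p}_2(U)=(2,1)$. The right convention is that the missing entry $T_{i+1,0}=0$ counts as dominated, which makes the second assertion vacuous at $j=1$. This matches how the proof of Theorem~\ref{Thm: Size of Preimage} groups ``$j=1$ or $T_{i,j-1}$ is dominated''.
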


\begin{proof}
    We prove the contrapositive of each statement.
    Since $\operatorname{p}_n(U) = (i,j)$, we know that $(i-1,j+1)$ is not in the sliding path of $U$, and so $T_{i-1,j+1} = U_{i-1,j+1} - 1$.
    Thus, if $\operatorname{p}_{n+1}(U) = (i,j+1)$, then $T_{i,j} = U_{i,j+1} - 1 > U_{i-1,j+1} - 1 = T_{i-1,j+1}$.
    Hence $T_{i,j}$ is not a dominated entry.

    Similarly, if $\operatorname{p}_{n+1}(U) = (i+1,j)$, then $T_{i,j} = U_{i+1,j} - 1 > U_{i+1,j-1} - 1 = T_{i+1,j-1}$.
    Hence $T_{i+1,j-1}$ is a dominated entry.
\end{proof}

The proof of Theorem \ref{Thm: Size of Preimage} explicitly constructs each element in the preimage and its sliding path.

\begin{theorem} \label{Thm: Size of Preimage}
    Let $l\in\N$, $\mu=(\infty^l)$, and $T \in \SYT(\mu)$.
    Then 
    \[|\prom^{-1}(T)| = l - k,\]
    where $k$ is the number of rows of $T$ that contain infinitely many dominated entries.
\end{theorem}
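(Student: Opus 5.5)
The plan is to convert the statement into a count of admissible sliding paths. By Lemma~\ref{lem: prom path dets inverse image equality}, an element $U\in\prom^{-1}(T)$ is determined by its sliding path. Conversely, given a lattice path $p=(p_1,p_2,\dots)$ with $p_1=(1,1)$ and unit steps right or down inside $(\infty^l)$, define $U=U_p$ by reversing the slides:
\[
U_{p_1}=1,\qquad U_{p_{n+1}}=T_{p_n}+1,\qquad U_{i,j}=T_{i,j}+1 \text{ for } (i,j)\notin p.
\]
Every $U\in\prom^{-1}(T)$ has this form. So $|\prom^{-1}(T)|$ equals the number of paths $p$ for which $U_p$ is standard and $\prom(U_p)=T$.

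The first step is a local characterization of admissibility, which sharpens Lemma~\ref{lem: Path to the Right} to an ``if and only if''. Comparing neighbouring entries of $U_p$, I expect the following.
\begin{itemize}
\item A right step from $(i,j)$ is admissible exactly when $i=1$ or $T_{i,j}$ is not dominated. The only nontrivial inequality is $U_{i-1,j+1}<U_{i,j+1}$, i.e.\ $T_{i-1,j+1}<T_{i,j}$.
\item A down step from $(i,j)$ is admissible exactly when $i<l$ and either $j=1$ or $T_{i+1,j-1}$ is dominated. The only nontrivial inequality is $U_{i+1,j-1}<U_{i+1,j}$, i.e.\ $T_{i+1,j-1}<T_{i,j}$.
\end{itemize}
The remaining inequalities, including the one forcing $\prom$ to choose the smaller neighbour, follow from $T$ being standard. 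So $\prom^{-1}(T)$ is in bijection with infinite lattice paths from $(1,1)$ in which every step obeys these local rules.

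By Lemma~\ref{lem: evenutal horizontal}, each admissible path eventually becomes horizontal in some row $r$. Staying in row $r$ forever requires $T_{r,j}$ to be non-dominated for all large $j$, so $r$ must be one of the $l-k$ rows with only finitely many dominated entries. (Row $1$ has none, since dominance needs a row above.) Uniqueness per row follows from Lemma~\ref{lem: Paths Never Cross}. Two admissible paths horizontal in the same row $r$ eventually occupy the same boxes. By the path indexing convention $i+j=n+1$, this gives $p_m(U)=p_m(V)$ for some $m$, and then the contrapositive of Lemma~\ref{lem: Paths Never Cross} forces $p_n(U)=p_n(V)$ for all $n\le m$. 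Hence at most one preimage exists per such row, which gives $|\prom^{-1}(T)|\le l-k$.

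The main obstacle is existence: for every row $r$ with finitely many dominated entries, we must build an admissible path that ends horizontally in row $r$. I would try induction on $r$. Row $r=1$ is realized by the explicit preimage $U$ already written down in Section~\ref{sec: basic}. Given the path for a smaller good row, or more generally a partial admissible path reaching row $i<r$, I would run it far to the right in row $i$ until it is past all dominated entries of row $r$. I would then descend at the first column $j$ where a down step is admissible, that is, where $T_{i+1,j-1}<T_{i,j}$.

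Two points have to be proved, and I expect them to be the real work.
\begin{enumerate}
\item The greedy procedure never reaches a dead end. If a right step is blocked because $T_{i,j}<T_{i-1,j+1}$, a down step must be available.
\item Descending columns occur when needed, even when the intermediate rows have only finitely many dominated entries themselves.
\end{enumerate}
For both I would argue with the anti-diagonal order of entries. Specifically, if $T_{i+1,j'-1}>T_{i,j'}$ held for all large $j'$, then together with non-dominance of row $r$ this should contradict the bijectivity of the $\N$-filling on rows $i,\dots,r$. The obstruction would be too many small entries trapped in rows $i$ through $r$.
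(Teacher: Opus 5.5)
The reduction to lattice paths, your two local step rules, and your upper bound $|\prom^{-1}(T)|\le l-k$ are all correct, and they agree with the paper's argument. Your step rules are exactly the inequalities the paper checks case by case when it verifies that its $U$ is standard.

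The gap is in the existence half, and it cannot be fixed by filling in details: the forward greedy procedure runs in the wrong direction.

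\textbf{Why the forward greedy fails.} By your own down-step rule, the step $(r-1,j)\to(r,j)$ is admissible only if $j=1$ or $T_{r,j-1}$ is dominated. Let $j_0$ be minimal such that $T_{r,j}$ is non-dominated for all $j\ge j_0$. Then a path that becomes horizontal in row $r$ must enter row $r$ at exactly column $j_0$. A path that is still above row $r$ in some column $j>j_0$ can never reach row $r$, so running far to the right before descending is fatal.

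\textbf{Point (2).} The contradiction you hope for is not a contradiction. For $i=r-1$, the condition ``$T_{r,j'-1}>T_{r-1,j'}$ for all large $j'$'' is exactly the hypothesis that row $r$ has only finitely many dominated entries.

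\textbf{Point (1).} This claim is also false. Going forward, the two step conditions at $(i,j)$ involve different entries, namely $T_{i,j}$ and $T_{i+1,j-1}$, so admissible partial paths can die.

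\textbf{A counterexample to both points.} Take $l=3$ and let the rows of $T$ be $1,2,6,8,10,13,\dots$; then $3,5,7,11,14,\dots$; then $4,9,12,15,\dots$. For the tails, set $T_{1,c}=3c-5$ for $c\ge5$, $T_{2,c}=3c-1$ for $c\ge4$, and $T_{3,c}=3c+3$ for $c\ge3$.
Row $2$ is dominated only in columns $2,3$, and row $3$ only in column $1$, so $|\prom^{-1}(T)|=3$.
The path $(1,1),(1,2),(1,3),(2,3)$ is admissible but stuck at $(2,3)$, because $7<8$ blocks the right step and $9>7$ blocks the down step.
Every path through $(1,2)$ misses row $3$. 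The step $(1,2)\to(2,2)$ is blocked since $3>2$, and row $3$ can only be entered from columns $1$ and $2$.
The actual preimage for row $3$ has path $(1,1),(2,1),(2,2),(3,2),(3,3),\dots$, which leaves row $1$ immediately.

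\textbf{The missing idea: build the path backwards, as the paper does.} Consider a box $(i,j)$ with $i,j\ge2$. The down step into it from $(i-1,j)$ is admissible iff $T_{i,j-1}$ is dominated. The right step into it from $(i,j-1)$ is admissible iff $T_{i,j-1}$ is not dominated. Since the same entry decides both, exactly one predecessor is admissible. For $i=1$ or $j=1$ the predecessor is forced, and it is admissible.

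So start from the horizontal tail at $(r,j_0)$ and trace back. Each step lowers $i+j$ by one, so the trace never stalls and ends at $(1,1)$. Your local characterization then shows that the resulting $U_p$ lies in $\prom^{-1}(T)$.
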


\begin{proof}
    From Lemma \ref{lem: prom path dets inverse image equality}, the sliding path uniquely determines elements of $\prom^{-1}(T)$. From Lemma \ref{lem: Paths Never Cross}, we see that the sliding paths of distinct elements of $\prom^{-1}(T)$ eventually diverge. 
    As sliding paths of elements of $\SYT(\mu)$ must eventually become horizontal (Lemma \ref{lem: evenutal horizontal}), sliding paths of distinct elements of $\prom^{-1}(T)$ eventually must remain on distinct rows of $\mu$. By Lemma \ref{lem: Path to the Right}, if a row of $T$ contains infinitely many dominated entries, then no sliding path of any of its preimages can eventually become horizontal in that row.
    Consequently, we get $|\prom^{-1}(T)| \leq l - k$.


    For $|\prom^{-1}(T)| \geq l - k$, suppose that $T$ contains at most finitely many dominated entries in row $i_0$. We construct a path $\operatorname{q} = (\operatorname{q}_1, \operatorname{q}_2, \dots)$ with $q_1=(1,1)$ that becomes horizontal in row $i_0$ and use it to construct $U\in\prom^{-1}(T)$ with $\operatorname{p}(U)=\operatorname{q}$.
    
    Let $j_0$ be minimal so that $T_{i_0,j}$ is not dominated for any $j\geq j_0$. For all $n\geq i_0+j_0-1$, define
    \[ \operatorname{q}_n = (i_0, n + 1 - i_0)\]
    so that $\operatorname{q}_n$ lies on row $i_0$ for $n\geq i_0+j_0-1$.
    Next, we iteratively construct $\operatorname{q}_n$ backwards down to $n = 1$. If $\operatorname{q}_n$ is defined for $n\geq r+1$ 
    with $r\geq 1$ and $\operatorname{q}_{r+1}=(i,j)$, define $\operatorname{q}_{r}$ as follows:
    \[
    \operatorname{q}_{r} =
    \begin{cases}
        (i-1,j), \text{ if $j = 1$ or $T_{i,j-1}$ is dominated}, \\
        (i,j-1), \text{ if $i = 1$ or $T_{i,j-1}$ is not dominated}. 
    \end{cases}
    \]
    It follows from this construction that $q$ defines a path of neighboring positions in $T$ and that for all $n\in \N$ if $\operatorname{q}_n = (i,j)$, then $i + j = n+ 1$. 
    
    Now, we define an infinite tableau $U$ of shape $\mu$ as follows:
    \begin{itemize}
        \item If $(i,j) \neq \operatorname{q}_n$  for all $n \in \N$, let $U_{i,j} = T_{i,j} + 1$.

        \item Let $U_{\operatorname{q}_1} = 1$ and $U_{\operatorname{q}_n} = T_{\operatorname{q}_{n-1}} + 1$ for all $n \geq 2$.
    \end{itemize}
    Note that this defines a bijective $\N$-filling of the boxes of $\mu$.
    
    To see that $U \in \SYT(\mu)$, we show that $U_{i,j} > U_{i-1,j}$ and $U_{i,j} > U_{i,j-1}$ for all $i,j\ge 1$ (where we interpret $U_{i,j} = T_{i,j} = 0$ if $i \leq 0$ or $j \leq 0$).
    Fix $i,j \in \N$.
    By construction, 
    \[U_{i,j} \in \{T_{i,j} + 1, T_{i-1,j} + 1, T_{i,j-1} + 1\},\]
    and similarly
    \begin{eqnarray*}
    & & U_{i-1,j} \in \{T_{i-1,j} + 1, T_{i-2,j} + 1, T_{i-1,j-1} + 1\} ~ \text{and}\\
    & & U_{i,j-1} \in \{T_{i,j-1} + 1, T_{i-1,j-1} + 1, T_{i,j-2} + 1\}.
    \end{eqnarray*}
    We show that the claim holds for each possible option for $U_{i,j}$ as listed above.
    \begin{enumerate}
        \item Suppose $U_{i,j} = T_{i,j} + 1$.
        Then, either $i = 1$ (and $U_{i,j} > U_{i-1,j}$ holds trivially) or $U_{i,j} > U_{i-1,j}$ since $T \in \SYT(\mu)$.
        Similarly, either $j = 1$ or $U_{i,j} > U_{i,j-1}$ since $T \in \SYT(\mu)$.

        \item If $U_{i,j} = T_{i-1,j} + 1$, then $\operatorname{q}_{i + j - 1} = (i,j)$, $\operatorname{q}_{i + j - 2} = (i-1,j)$, and $T_{i,j-1}$ is dominated or $j = 1$.
        Observe that $U_{i-1,j} \neq U_{i,j}= T_{i-1,j} + 1$ by construction of $U$.
        Thus $U_{i,j} > U_{i-1,j}$ since $T \in \SYT(\mu)$.
        
        If $j = 1$, then $U_{i,j} > U_{i,j-1}$ holds trivially. 
        We assume that $j > 1$, and so $T_{i,j-1}$ is dominated.
        Observe that $U_{i,j-1} \neq T_{i,j-2} + 1$ and $U_{i,j-1} \neq T_{i-1,j-1} + 1$, since otherwise we obtain the contradiction $(i,j-1) = \operatorname{q}_{i + j - 2} = (i - 1,j)$.
        Thus $U_{i,j-1} = T_{i,j-1} + 1$, and so $U_{i,j} = T_{i-1,j} + 1 > T_{i,j-1} + 1 = U_{i,j-1}$ since $T_{i,j-1}$ is dominated.

        \item If $U_{i,j} = T_{i,j-1} +1$, then $\operatorname{q}_{i + j - 1} = (i,j)$ and $\operatorname{q}_{i + j - 2} = (i,j-1)$.
        The argument is similar to Case (2).
    \end{enumerate}
    It only remains to show that $\operatorname{p}_n(U) = \operatorname{q}_n$ for each $n \in \N$ since, by the construction of $U$, it will then follow immediately that $\prom(U) = T$.
    We do this by induction on $n \in \N$.
    Clearly $\operatorname{q}_1 = (1,1) = \operatorname{p}_1(U)$, so suppose $\operatorname{q}_n = (i,j) = \operatorname{p}_n(U)$ for some $n \geq 1$.
    Then $\operatorname{q}_{n + 1} \in \{(i+1,j),(i,j+1)\}$.
    If $\operatorname{q}_{n+1} = (i+1,j)$, then $(i,j+1) \neq \operatorname{q}_m$ for any $m \in \N$. 
    Hence
    \[U_{i+1,j} = T_{i,j} + 1 < T_{i,j+1} + 1 = U_{i,j+1}\]
    because $T \in \SYT(\mu)$, and so $\operatorname{p}_{n+1}(U) = (i+1,j) = \operatorname{q}_{n+1}$.
    If $\operatorname{q}_{n+1} = (i,j+1)$, the argument is similar and omitted. 
    %
    %
    %
\end{proof}

\begin{remark} \label{rmk: Upper Bound is Sharp}
    In particular, Theorem \ref{Thm: Size of Preimage} shows that $|\jdt^{-1}(T)| \leq l$ for $T \in \SYT((\infty^l))$.
    It is straightforward to show that this bound is sharp by explicitly constructing $T(l)\in\SYT((\infty^l))$ with $|\prom^{-1}(T(l))| = l$. To do so,  fill boxes diagonally (along diagonals that correspond to positions $(i,j)$ with $i+j$ constant) 
    with the smallest available entry, working down from the top. 
    For example, $T(3)$ is pictured in Figure \ref{fig: example prom^-1 = l}.
    \begin{figure}[H]
        \[ \mathrel{\raisebox{-2ex}{$T(3)$ =}} 
        \withlines{\begin{ytableau}
            1 & 2 & 4 & 7 & 10 & 13 & \none[\cdots]\\
            3 & 5 & 8 & 11 & 14 & 17 &\none[\cdots]\\
            6 & 9 & 12 & 15 & 18 & 21 & \none[\cdots]
        \end{ytableau}} \hspace{.2cm}
        \]
        \caption{$T(3) \in \SYT((\infty^3))$ with $|\prom^{-1}(T(3))| = 3$.}
        \label{fig: example prom^-1 = l}
    \end{figure}
    It is straightforward to verify in this case that $|\prom^{-1}(T(l))| = l$ and that the $i$th corresponding sliding path moves down the first column to row $i$ and then becomes horizontal.
    
    Based on Theorem \ref{Thm: Size of Preimage}, it is natural to guess that there exist $T\in\SYT((\infty^\infty))$ with $|\prom^{-1}(T)| = \infty$. In fact, this is true and the natural diagonal construction above provides an example. 
    In Section \ref{sec: concluding remarks}, we outline a construction showing that the preimage $\prom^{-1}(T)$ can be uncountable.
\end{remark}




We end this section with an observation on the interaction of left smashing, see Remark~\ref{rem: smash variant tableau and infinite tableau}, and the $\prom$ algorithm.

\begin{lemma} \label{lem: Non-rectangluar Smashing}
    Let $n,l\in\N$, $\lambda\vdash n$ with $\lambda'_1 \leq l$, $\mu=(\infty^l)$, $S\in\SYT(\lambda)$, and $T\in\SYT(\mu)$.
    Then
    \[ 
    \prom\left(S \lsmash T\right) = \jdt(S) \lsmash T.
    \]
\end{lemma}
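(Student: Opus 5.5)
The plan is to follow the sliding path $\operatorname{p}(S \lsmash T)$ directly and split it into two phases: first inside the boxes of $\lambda$ (the entries of $S$), and then inside the boxes filled by the shifted copy of $T$. Write $U = S \lsmash T$. Then $U_{i,j} = S_{i,j}$ for $(i,j)\in\lambda$ and $U_{i,\lambda_i+j} = T_{i,j}+n$ otherwise, with $\lambda_i = 0$ for $i > \lambda'_1$. Every entry of the $S$-part is smaller than every entry of the $T$-part.

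\emph{Phase 1 (inside $\lambda$).} Suppose the empty box lies in $\lambda$. If both of its neighbors (to the right and below) lie in $\lambda$, then $\jdt$ on $U$ makes the same comparison as the finite slide on $S$. If exactly one neighbor lies in $\lambda$, that neighbor carries an entry of $S$, which is smaller than any $T$-entry, so it is chosen, again as in the finite slide on $S$ (where the other neighbor is absent). By induction, the sliding path of $U$ agrees with the sliding path of the finite $\jdt$ on $S$ until the empty box reaches an outer corner $(i,\lambda_i)$ of $\lambda$. At that moment the boxes of $\lambda$ minus this corner hold exactly the entries of $\jdt(S)$ before decrementing.

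\emph{Phase 2 (horizontal escape).} This is the main step. I claim that from the corner $(i,\lambda_i)$ the path runs horizontally along row $i$ forever. Since $(i,\lambda_i)$ is a corner, $\lambda_{i+1} < \lambda_i$, so the shift $d = \lambda_i - \lambda_{i+1} \ge 1$. The following invariant is proved by induction on $k \ge 1$: when the empty box is at $(i,\lambda_i + k - 1)$, its right neighbor holds $T_{i,k}+n$. Its lower neighbor (if $i<l$) is $(i+1,\lambda_i+k-1) = (i+1,\lambda_{i+1}+(k-1+d))$, which holds $T_{i+1,k-1+d}+n$. Because $k-1+d \ge k$, this entry is at least $T_{i+1,k}+n$, which exceeds $T_{i,k}+n$ by column strictness of $T$. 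So the path moves right. If $i = l$, there is no lower neighbor and the path moves right trivially.

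\emph{Conclusion.} After the slides, the following holds.
\begin{itemize}
\item Rows other than $i$ are unchanged.
\item The $\lambda$-portion holds $\jdt(S)$ before decrementing, on the shape $\lambda$ with the box $(i,\lambda_i)$ removed.
\item Row $i$ of the $T$-portion has shifted one step left, so it now starts in column $\lambda_i$, right after the shortened row $i$ of $\jdt(S)$.
\end{itemize}
Subtracting $1$ from every entry turns the $S$-entries into $\jdt(S)$ and the $T$-entries into $T + (n-1)$, with $n-1 = |\jdt(S)|$. This is exactly the placement in the definition of $\jdt(S) \lsmash T$, with its convention $\varnothing \lsmash T = T$ when $n = 1$.
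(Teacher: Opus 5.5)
Your proof is correct and takes essentially the same route as the paper. The sliding path of $S \lsmash T$ agrees with that of $S$ until it reaches an outer corner $(i,\lambda_i)$, and it then stays in row $i$ because $T_{i,k} < T_{i+1,k} \le T_{i+1,k-1+d}$ with $d=\lambda_i-\lambda_{i+1}\ge 1$; this is exactly the paper's inequality. You also supply two details the paper leaves implicit: why the path inside $\lambda$ matches the finite slide (every $S$-entry is smaller than every $T$-entry), and the final bookkeeping of the smash. One small wording fix: your bullet ``rows other than $i$ are unchanged'' should refer only to the $T$-portions of those rows.
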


\begin{proof}
    Observe that, by construction, the sliding path of $S \lsmash T$ intersected with $S$ is identical to the sliding path of $S$, and that the final box of the sliding path in $S$, $(i_0,\lambda_{i_0})$, is an outer corner of $\lambda$. 
    We claim that the sliding path of $S \lsmash T$ becomes horizontal in row $i_0$.
    If so, the result follows as $\prom$ performs its slides on $S$ and then shifts a row of $T$ left to replace the vacated box $(i_0,\lambda_{i_0})$ of $S$.

    If $i_0 = l$, there is nothing to check. Otherwise, consider $1 \leq i_0 < l$ and $j\geq \lambda_{i_0} + 1$. 
    In that case, the construction of $S \lsmash T$ shows that 
    \[ (S \lsmash T)_{i_0, j} = T_{i_0, j - \lambda_{i_0}} + n \text{ and} \]
    \[ (S \lsmash T)_{i_0+1, j - 1} = T_{i_0 +1, j -1 - \lambda_{i_0+1}} + n.\]
    Moreover, we have $\lambda_{i_0} \geq \lambda_{i_0+1} + 1$ since $(i_0,\lambda_{i_0})$ is an outer corner of $\lambda$, so that 
    \[ T_{i_0, j - \lambda_{i_0}} < T_{i_0+1, j - \lambda_{i_0}} \leq T_{i_0 +1, j -1 - \lambda_{i_0+1}}. \]
    Hence 
    \[(S \lsmash T)_{i_0,j}  < (S \lsmash T)_{i_0 +1,j - 1}.\]
    It follows by induction that $\operatorname{p}_n(S \lsmash T) = (i_0, j)$ for all $n = i_0 + j - 1 \geq i_0 + \lambda_{i_0} - 1$.
\end{proof}

\section{Periodic Behavior of $\prom$} \label{sec: periodic}


\begin{definition} \label{Def: Periodic}
    Let $\mu$ be an infinite shape and $T\in\SYT(\mu)$. We say that $T$ is \emph{periodic} with \emph{period $k$}, also written \emph{$k$-periodic}, if $\prom^k(T) = T$. 
\end{definition}

\begin{theorem} {\label{thm: Periodic gives linear growth}}
    Let $\mu$ be an infinite shape and $T\in\SYT(\mu)$.
    If $T$ is $k$-periodic, then $T_{i,j}\leq k(i+j-2)+1$.
    Moreover, if $l\in\N$ and $\mu = (\infty^l)$, then $k\geq l$.
\end{theorem}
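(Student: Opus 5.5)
The plan is to track how individual entries move under $\prom$. The basic observation is the following. When $\prom$ is applied, an entry not on the sliding path keeps its box and decreases by $1$. An entry in a box $\operatorname{p}_{n+1}$ of the sliding path moves to $\operatorname{p}_n$, which is the box directly above or to the left, and also decreases by $1$. The entry $1$ at $(1,1)=\operatorname{p}_1$ is deleted. So under one application of $\prom$, the diagonal index $i+j$ of an entry's box either stays the same or drops by exactly $1$.

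\textbf{Key step (covering lemma).} Suppose $\prom^k(T)=T$. Then for every box $(i,j)$ and every $t\ge 0$, the box $(i,j)$ lies on the sliding path of at least one of $\prom^{t}(T),\dots,\prom^{t+k-1}(T)$. Indeed, $\prom^t(T)$ is also $k$-periodic. If $(i,j)$ were on none of these $k$ sliding paths, its entry would merely be decremented $k$ times, giving $\prom^{t+k}(T)_{i,j}=\prom^t(T)_{i,j}-k\neq \prom^t(T)_{i,j}$, a contradiction.

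\textbf{Linear growth.} Fix the entry $v=T_{i,j}$ and follow it. Suppose that at time $t$ it sits in box $P$. By the covering lemma, $P$ lies on the sliding path at some time in $[t,t+k-1]$. At the first such time the entry is still in $P$, because it cannot move unless $P$ is on the path. At that time it either moves one diagonal closer to $(1,1)$ or, if $P=(1,1)$, it is deleted. Hence within every block of $k$ steps the entry's diagonal index drops by at least $1$, or the entry is deleted. Reaching $(1,1)$ therefore takes at most $k(i+j-2)$ steps, and an entry at $(1,1)$ has value $1$. Since the value drops by exactly $1$ per step, $v-1\le k(i+j-2)$, which is the claimed bound $T_{i,j}\le k(i+j-2)+1$.

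\textbf{The bound $k\ge l$ for $\mu=(\infty^l)$.} Consider the $k$ tableaux $T,\prom(T),\dots,\prom^{k-1}(T)$. By Lemma~\ref{lem: evenutal horizontal}, each of their sliding paths becomes horizontal at some column. Choose $j$ larger than all of these $k$ columns. Then each of the $k$ paths meets column $j$ in exactly one box. By the covering lemma, all $l$ boxes $(1,j),\dots,(l,j)$ of column $j$ must be covered by these $k$ paths, so $k\ge l$.

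\textbf{Main obstacle.} The main obstacle is the covering lemma and its correct use. In particular, one must make sure the tracked entry is still in $P$ when $P$ first appears on a path. This holds because an entry leaves its box only when that box is on the sliding path. The rest is bookkeeping.
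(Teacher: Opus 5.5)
Your proposal is correct and follows essentially the same route as the paper. Any box off all $k$ consecutive sliding paths would only be decremented by $k$, so every box must slide in each block of $k$ iterations; an entry therefore moves one antidiagonal toward $(1,1)$ per block, and for $(\infty^l)$ the $k$ eventually horizontal paths must cover all $l$ rows. The only difference is that you spell out details the paper leaves implicit: the covering statement for every shift $t$, and the count of boxes in a single column to the right of where all $k$ paths have become horizontal.
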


\begin{proof}    
    As $\prom^k(T) = T$ and boxes that do not slide under $\prom^k$ are simply decremented by $k$,
    each box $(i,j)$ must slide under each iteration of $\prom^k$. It follows that there is an $m\in\N$ so that the box $(i,j)$ slides to the box $(1,1)$ under $\prom^m$, and as every path from $(i,j)$ to $(1,1)$ that moves only up or to the left has length $i+j-2$, we see that $m\leq k(i+j-2)$. Finally, if the box $(i,j)$ slides to the box $(1,1)$ under $\prom^m$, it follows that $1=T_{i,j} -m$, so that $T_{i,j} = m+1 \leq k(i+j-2) + 1$.

    For the final statement, recall from Lemma \ref{lem: evenutal horizontal} that for each $0 \leq j \leq k-1$ the sliding path of the standard Young tableau $\prom^j(T)$ eventually becomes horizontal. As every box must slide under $\prom^k$, these horizontal lines must include every row of $\mu$. Therefore $k\geq l$.    
\end{proof}



\begin{example} 
    In light of Theorem \ref{thm: Periodic gives linear growth}, one might hope that linear growth of the entries of $T \in \SYT(\mu)$ forces periodicity. However, this is not true.
    For example, let
        \[Y = \ytableaushort{12,3} \,\,\,\text{ and }\,\,\, Z = \ytableaushort{13,2}\]
    and define $T\in\SYT((\infty^2))$ by smashing an increasing number of copies of $Y$ before each copy of $Z$:
        \[ T = Y\lsmash Z \lsmash Y\lsmash Y\lsmash Z \lsmash Y\lsmash Y \lsmash Y\lsmash Z \lsmash \ldots. \]
    It is straightforward to show that $T_{i, j} \leq 3j$, but that $T$ is not periodic.
    (In fact, with Theorem \ref{thm: backfill and alt periodic classification}, every row of a periodic tableau $T$ must eventually show a periodic pattern.)
\end{example}

As a corollary of Theorem \ref{thm: Periodic gives linear growth}, we get the following.

\begin{corollary} \label{Cor: doubly infinite is never periodic}
    No element of $\SYT{((\infty^\infty))}$ is periodic.
\end{corollary}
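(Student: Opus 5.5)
We argue by contradiction: suppose $T\in\SYT((\infty^\infty))$ is $k$-periodic for some $k\in\N$. By Theorem~\ref{thm: Periodic gives linear growth}, every entry then satisfies
\[
T_{i,j}\le k(i+j-2)+1 .
\]
We will show that this linear bound is incompatible with the quadratic growth of the number of boxes on the anti-diagonals of $(\infty^\infty)$.

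For $N\in\N$, let $D_N=\{(i,j)\in\N\times\N \mid i+j\le N+1\}$. This set has $N(N+1)/2$ boxes. Each box of $D_N$ satisfies $i+j-2\le N-1$, so its entry is at most $k(N-1)+1$.

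Since $T$ is a bijective $\N$-filling, the entries in $D_N$ are $N(N+1)/2$ distinct positive integers, all lying in $[1,k(N-1)+1]$. This forces
\[
\frac{N(N+1)}{2}\le k(N-1)+1 .
\]
That inequality fails once $N$ is large, for instance for any $N\ge 2k$. This is the desired contradiction.

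An alternative argument follows the second half of Theorem~\ref{thm: Periodic gives linear growth}. Every box must slide under $\prom^k$, yet only $k$ sliding paths occur during $k$ iterations. Each path meets each anti-diagonal in exactly one box, so at most $k$ boxes of any anti-diagonal slide. The anti-diagonal $i+j=k+2$ has $k+1$ boxes, so some box on it never moves. This second route needs a little care, since boxes move between applications of $\prom$. The counting argument avoids that issue, so I would present it.

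There is no real obstacle here. The only point to check is that the bound from Theorem~\ref{thm: Periodic gives linear growth} holds for all boxes of $(\infty^\infty)$, and that theorem is stated for an arbitrary infinite shape $\mu$.
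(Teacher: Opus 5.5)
Your proof is correct and takes essentially the paper's route: both combine the linear bound $T_{i,j}\le k(i+j-2)+1$ from Theorem~\ref{thm: Periodic gives linear growth} with quadratic growth forced by the distinctness of the entries. The only difference is in the counting step: the paper uses $T_{i,j}\ge ij$, so that $T_{i,i}\ge i^2$, while you apply pigeonhole to the triangle $D_N$, and these are equivalent ways of getting the quadratic count.
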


\begin{proof}
    By the definition of standard Young tableau, it follows that any $T\in\SYT{((\infty^\infty))}$ satisfies $T_{i, j}\geq ij$. In particular, $T_{i,i}$ has at least quadratic growth in $i$ so that Theorem \ref{thm: Periodic gives linear growth} finishes the argument.
\end{proof}

We come now to a structure theorem for periodic infinite tableaux in $\SYT((\infty^l))$.

\begin{theorem}\label{thm: Structure Thm}
    Let $l,k\in\N$ with $k\geq l$, $\mu=(\infty^l)$, and $T\in \SYT(\mu)$.
    Then 
    \[ \prom^k(T)=T \]
    if and only if there exists $p_0 \in \N$ and, for $1\leq i \leq p_0+1$,
    there are skew shapes $\sigma(i)$ contained in $\mu$ and $S(i)\in\SYT(\sigma(i), \N)$ 
    such that
    \begin{itemize}
        \item $S(i) = T|_{\sigma(i)} = T[1+(i-1)k, ik]$\,\, for all\,\,  $1\leq i \leq p_0+1$,
        \item $\prom^k_T(S(i)) = S(i-1)$\,\, for all\,\, $2\leq i \leq p_0+1$,
        \item the shifts $\tilde{S}(i) := S(i) - (i-1)k$ satisfy that $\tilde{S}(p_0)$ and $\tilde{S}(p_0+1)$ are row equivalent, and
        \item $T = \tilde{S}(1) \lsmash \tilde{S}(2) \lsmash \dots \lsmash \tilde{S}(p_0 -1) \lsmash \biglsmash_{i=p_0}^\infty \tilde{S}(p_0)$.
    \end{itemize}
    In this case, $l =\ell( \sigma(p_0))  \geq \ell(\sigma(p_0-1)) \geq \ldots \geq \ell(\sigma(1))$,
    $\sigma(1)$ is a Young diagram, and $\sigma(p_0)$ is weakly decreasing.
\end{theorem}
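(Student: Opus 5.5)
Both directions rest on tracking the blocks $T[1+(i-1)k,\,ik]$ under $\prom^k$. I will first establish an auxiliary fact. For any $T\in\SYT(\mu)$ and $m\le k$, applying $\prom^m$ deletes the entries $1,\dots,m$ and decrements all others by $m$. Moreover, the boxes holding entries of $T[1+(i-1)k,ik]$ with $i\ge 2$ are carried by $\prom^k$ to boxes holding entries $[1+(i-2)k,(i-1)k]$ of $\prom^k(T)$, because $\prom$ preserves the relative order of the surviving entries. So $\prom^k_T(S(i))$ is a skew tableau whose entries are exactly $S(i)-k$, placed in some skew shape.

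\emph{Forward direction.} Assume $\prom^k(T)=T$, set $\sigma(i)$ to be the set of boxes with entries in $[1+(i-1)k,ik]$, and set $S(i)=T|_{\sigma(i)}$. Periodicity gives $\prom^k_T(S(i))$ equal to the block of $T$ with entries $[1+(i-2)k,(i-1)k]$, which is $S(i-1)$. The entries $[1,ik]$ always form a Young diagram, so $\sigma(1)$ is a Young diagram and each union $\sigma(1)\cup\dots\cup\sigma(i)$ is a Young diagram. The substance is the existence of $p_0$.

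For $p_0$, I would use Theorem~\ref{thm: Periodic gives linear growth} together with the sliding paths of $\prom^j(T)$ for $0\le j<k$. Each sliding path eventually becomes horizontal (Lemma~\ref{lem: evenutal horizontal}). Beyond a column $j_0$ past every turning point, each of these $k$ paths is horizontal in a fixed row. In row $r$, let $c_r$ be the number of them that are horizontal there. Then one full application of $\prom^k$ shifts row $r$ left by exactly $c_r$ boxes far to the right, and decrements its entries by $k$. Periodicity then forces $T_{r,j+c_r}=T_{r,j}+k$ for large $j$, and every row must have $c_r\ge 1$ (this recovers $k\ge l$).

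Hence, once the blocks lie entirely in the region beyond column $j_0$, consecutive blocks consist of the same row multisets shifted right by $c_r$ in row $r$ with entries shifted by $k$. This is exactly row equivalence of $\tilde S(p_0)$ and $\tilde S(p_0+1)$, with $\sigma(p_0)$ having $c_r$ boxes in row $r$. Because blocks grow monotonically in each row, I choose $p_0$ large enough that the block $S(p_0)$ lies past $j_0$. The repetition then propagates: every later $\tilde S(i)$ is row equivalent to $\tilde S(p_0)$, and as left smashes they coincide. So $T$ is the stated smash.

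For the shape claims, weak decrease of $\sigma(p_0)$ amounts to $c_r\ge c_{r+1}$. This follows because the smash must stay standard, since columns increase infinitely often. The inequality $\ell(\sigma(i))\le\ell(\sigma(i+1))$ follows because $\prom^k$ carries $\sigma(i+1)$ onto $\sigma(i)$ using only up and left moves, so the lowest row cannot increase. Finally, $\ell(\sigma(p_0))=l$ since every $c_r\ge1$.

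\emph{Converse.} Assume the smash decomposition. I would show $\prom^k_T(S(i))=S(i-1)$ together with periodicity of the tail, in the spirit of Lemma~\ref{lem: Non-rectangluar Smashing}. Write $T=P\lsmash R$, where $P=\tilde S(1)\lsmash\cdots\lsmash\tilde S(p_0-1)$ and $R=\biglsmash\tilde S(p_0)$. Applying $\prom^k$ blockwise, the hypothesis $\prom^k_T(S(i))=S(i-1)$ says the first $p_0$ blocks are moved onto the previous ones. The tail's row equivalence then shows that $\prom^k$ of the remainder simply shifts each row of the periodic part left by $c_r$. This reproduces $T$.

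The main obstacle is the step making the eventual-horizontality argument rigorous: choosing $p_0$ uniformly for all $k$ iterates, and checking that row equivalence, rather than merely equal row lengths, holds. I would handle this by explicitly indexing columns beyond the maximal turning column of the $k$ sliding paths.
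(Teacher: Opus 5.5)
Your forward direction is correct and follows the paper's argument. You fix a column beyond which the sliding paths of $T,\prom(T),\dots,\prom^{k-1}(T)$ are all horizontal, and you take $p_0$ so that $S(p_0)$ lies past it. Your counts $c_r$ and the identity $T_{r,j+c_r}=T_{r,j}+k$ for large $j$ make the row equivalence, $\ell(\sigma(p_0))=l$, and the weak decrease more explicit than the paper does. One small point: you need $j$ somewhat beyond $j_0$ (say $j\ge j_0+k$) so that boxes stay in the horizontal zone for all $k$ steps, but your phrase ``far to the right'' covers this.

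The gap is in the converse, which is the harder half, not the forward step you flagged. Your key sentence is that ``the tail's row equivalence then shows that $\prom^k$ of the remainder simply shifts each row left by $c_r$.'' This does not follow as stated. Row equivalence of the successive copies of $\tilde S(p_0)$ is automatic from the smash form, so it says nothing about where the $k$ sliding paths go. Two ingredients are missing.

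\emph{First ingredient.} The hypothesis at $i=p_0+1$ is the one that seeds the tail. Combine $\prom^k_T(S(p_0+1))=S(p_0)$ with $\tilde S(p_0+1)\sim\tilde S(p_0)$: every box of $S(p_0+1)$ ends in its own row. Since boxes only move up or left, no box of $S(p_0+1)$ ever moves up during the $k$ slides.

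\emph{Second ingredient.} You must propagate this to the later copies, and this is where $c_r\ge c_{r+1}$ is needed. Your growth argument gives this inequality for any standard $T$ of smash form, but you only invoke it under periodicity. Concretely, argue by induction over the $k$ slides. Let $A_r(1)<A_r(2)<\cdots$ be the entries $>p_0k$ in row $r$, starting at column $e_r$, so that $A_r(q+c_r)=A_r(q)+k$.
\begin{enumerate}
\item The hole can enter this region only from the left. Entering from above would either push a box of $S(p_0+1)$ up or lose to the smaller entry on the right.
\item At entry, $\delta=e_r-e_{r+1}\ge 1$.
\item Each subsequent step compares $A_r(q+1)$ with $A_{r+1}(q+\delta)$. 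Write $q+1=mc_r+t$ and use $c_r\ge c_{r+1}$. Then either the comparison is between different copies, in which case it is automatic, or it is implied by the corresponding $m=0$ comparison inside $S(p_0+1)$. The first ingredient forces that comparison to send the hole right.
\end{enumerate}
Only after this does each row of the tail shift by exactly $c_r$, giving $\prom^k(T)=T$.

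The paper's converse rests on exactly these two ingredients: no upward move in $S(p_0+1)$, plus weak decrease, via the comparison in Lemma~\ref{lem: Non-rectangluar Smashing}. Your proposal supplies neither.
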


\begin{proof}
    For $i\in\N$, let
    \[ S(i) := T[1+(i-1)k, ik] \]
    and write $\sigma(i)$ for the corresponding skew shape (a Young diagram for $i=1$).
    By definition, we see that $T$ is $k$-periodic if and only if
    $\prom_T^k(S(i))=S(i-1)$
    for all $i\geq 2$.
    By Lemma~\ref{lem: evenutal horizontal}, there is a $j_0\in\N$ so that the sliding paths of the infinite Young tableaux $T, \prom(T), \prom^2(T),\ldots,\prom^{k-1}(T)$ all become horizontal at column $j_0$.

    If $T$ is $k$-periodic, choose $p_0\in\N$ minimal so that no box of $S(p_0)$ is to the left of column $j_0$, and let $i\geq p_0 +1$. By $k$-periodicity and horizontal sliding paths, it follows that $\prom^k$ decrements by $k$ and slides $S(i)$ horizontally onto $S(i-1)$, meaning that $S(i)$ and $S(i-1)$ have identical row sizes with corresponding entries differing by $k$, hence $\tilde{S}(i)\sim \tilde{S}(i-1)$. The result follows.

    Conversely, if $T$ satisfies the conditions, it remains to show that the successive smashes of each copy of $\tilde{S}(p_0)$ map to each other under $\prom^k$. First, note that successively smashing with $\tilde{S}(p_0)$ can only fill all of $\mu$ if $\ell( \sigma(p_0))=l$. Moreover, successively smashing with $\tilde{S}(p_0)$ means that column entries corresponding to shorter rows of $\tilde{S}(p_0)$ will eventually outpace those entries corresponding to longer rows of $\tilde{S}(p_0)$.    
    Thus, successive smashing with $\tilde{S}(p_0)$ can only lead to a tableau $T$ with all its columns increasing if the skew shape $\sigma(p_0)$ is weakly decreasing. Next, $\tilde{S}(p_0+1) \sim \tilde{S}(p_0)$ implies that no box has moved up while applying $\prom^k$ to $S(p_0+1)$. As in the proof of Lemma \ref{lem: Non-rectangluar Smashing}, from this and the fact that $\sigma(p_0)$ is weakly decreasing it follows that no box moves up while applying $\prom^k$ to any of the later smashed copies of $\tilde{S}(p_0)$, and the result follows.
    
    The final string of inequalities is immediate from the definition of $\prom$.
\end{proof}

\begin{remark}\label{rem: weak extension}
    It immediately follows from either Lemma \ref{lem: Non-rectangluar Smashing} or Theorem \ref{thm: Structure Thm} that any standard Young tableau can be extended to a periodic infinite standard Young tableau.
    More precisely, if $n \in \N$, $\lambda \vdash n$, and $R \in \SYT(\lambda)$, then for every $l \geq \lambda'_1$ there is some $T \in \SYT((\infty^l))$ such that $T|_\lambda = R$ and $\jdt^k(T) = T$ for some $k \in \N$.
    In particular, we can construct such a tableau $T$ by first extending $R$ to some standard Young tableau $\tilde{R} \in \SYT(\nu)$ where $\nu$ contains $\lambda$ and $\nu_1' = l$.
    Then, we may take $T := \biglsmash_{i=1}^\infty \tilde{R}$ and $k := |\tilde{R}|$.
    

    In fact, as we will show in Corollary \ref{cor: fine extension}, we can extend $R$ to an $|R|$-periodic $T \in \SYT((\infty^l))$ for any $\lambda_1' \leq l \leq |R|=n$.
\end{remark}

Our next lemma provides a general construction for periodic (and pre-periodic) infinite standard Young tableaux and demonstrates the use of rectification and reading words.

 \begin{lemma} \label{lem: One Lemma to Rule Them All}
    Let $l,n\in\N$, $\mu=(\infty^l)$, $\sigma = \nu_1 / \nu_2$ a weakly decreasing skew shape contained in $\mu$ with $\ell(\sigma)=l$, and $S\in\SYT(\sigma)$. Write 
    $k=|S|$. Now take $\lambda\vdash n$ contained in $\mu$ and $B\in\SYT(\lambda)$ such that $B \lsmash S$ is a standard Young tableau. (In many cases, we may have the natural choice $\lambda = \nu_2$.)
    If $T$ is defined as 
    \[ T = B \lsmash \left(\biglsmash_{i = 1}^\infty S \right), \]
    then $T\in\SYT(\mu)$ and
    \[ \tilde{T}=\prom^{n}(T)\]
    is $k$-periodic.
    Moreover, $\tilde{T}$ is of the form 
    \[ \tilde{T}=\tilde{B} \lsmash \left(\biglsmash_{i = 1}^\infty S \right)\]
    with $\tilde{B}$ the decremented rectification of $T[n+1, n+ (i_0-1) k]$ for some $i_0\in\N$ (i.e., a smash of $i_0-1$ copies of $S$).
\end{lemma}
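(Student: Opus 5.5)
The plan is to track the $n$ applications of $\prom$ on the smashed tableau by splitting each one into the part acting on the finite head and the part acting on the tail of copies of $S$.

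\textbf{Step 1: $T$ is standard.} We have $B \lsmash S$ standard by hypothesis. Since $\sigma$ is weakly decreasing, Remark~\ref{rem: can smash with skew too} gives that every further smash of a copy of $S$ remains standard. Since $\ell(\sigma)=l$, infinitely many copies have maximal height, so the filling exhausts $\mu$ and $T\in\SYT(\mu)$.

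\textbf{Step 2: the sliding paths are horizontal in the tail.} Write $T=B\lsmash S\lsmash S\lsmash\cdots$. Each sliding path enters the region of the copies of $S$ in some row. I would show that, once a path is inside a full copy of $S$ that begins after the head, it can move down but never up, since paths only move right or down. Because $\sigma$ is weakly decreasing, the argument of Lemma~\ref{lem: Non-rectangluar Smashing} then applies copy by copy. From some copy onward, an entry in row $i$ of a later copy is smaller than the entry diagonally below-left of it, so the path stays horizontal and simply shifts the remaining copies of $S$ one step left in that row.

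\textbf{Step 3: iterate $n$ times and identify the head.} Each path eventually becomes horizontal (Lemma~\ref{lem: evenutal horizontal}). Step~2 says that below a certain copy each application of $\prom$ only shifts and decrements the tail of copies of $S$. So I would choose $i_0$ large enough that all $n$ sliding paths of $T,\prom(T),\dots,\prom^{n-1}(T)$ become horizontal inside the first $i_0-1$ copies. This is possible because each path leaves every row permanently once it goes down, and a path reaching row $l$ needs only finitely many copies.

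Then $\prom^n(T)$ equals a finite tableau $\tilde B$ smashed with $\biglsmash_{i=1}^\infty S$. Here $\tilde B$ is obtained by $n$ promotions of the finite tableau formed by $B$ together with the first $i_0-1$ copies, with the later copies left intact up to the shift. The $n$ promotions remove exactly the entries $1,\dots,n$, which are precisely the entries of $B$. Removing entries in order with decrementing is the same as performing jeu de taquin slides into the vacated inner corners. By Theorem~\ref{thm: Uniqueness of Rectified Skew Tableau}, the result is the decremented rectification of $T[n+1,n+(i_0-1)k]$.

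\textbf{Step 4: periodicity.} I would apply Lemma~\ref{lem: Non-rectangluar Smashing}, or rather its skew and finite-head variant from Step~2, $k$ more times to $\tilde T=\tilde B\lsmash S\lsmash S\lsmash\cdots$. One might argue that $\prom^{k}$ sends $\tilde B \lsmash S$ to the rectification of $\tilde B\lsmash S$ minus $k$ entries, but that is not quite right. The cleaner route is the converse direction of Theorem~\ref{thm: Structure Thm}. Take $T[1+(i-1)k,ik]$ blocks of $\tilde T$ with $p_0$ past $\tilde B$. The head $\tilde B$ has size $(i_0-1)k$, a multiple of $k$, so the blocks align with the copies of $S$. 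Consecutive tail blocks are row-equivalent shifted copies of $S$, so $\tilde T$ has the form required there, and hence $\prom^k(\tilde T)=\tilde T$.

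\textbf{Main obstacle.} The hardest part is Step~2: proving that the paths eventually stay horizontal within a single row across copies of a weakly decreasing skew $S$, rather than only eventually, and doing so uniformly enough to choose one $i_0$. I would first try a direct adaptation of the inequality chain in the proof of Lemma~\ref{lem: Non-rectangluar Smashing}, comparing entries $(i,j)$ and $(i+1,j-1)$ in adjacent copies using $\sigma_i\ge\sigma_{i+1}$.
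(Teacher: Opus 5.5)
Your Steps 1 and 3 are essentially the paper's argument. Standardness comes from Remark~\ref{rem: can smash with skew too}. Lemma~\ref{lem: evenutal horizontal}, applied to the finitely many tableaux $T,\prom(T),\dots,\prom^{n-1}(T)$, gives one column $j_0$ past which all their sliding paths are horizontal, so copies of $S$ lying to the right of column $n+j_0$ are only shifted left within rows. Theorem~\ref{thm: Uniqueness of Rectified Skew Tableau} then identifies the head $\tilde B$ as the decremented rectification. The difficulty you single out in Step~2 is therefore not where the work lies. Moreover, Step~2's claim that an entry of a late copy in row $i$ is eventually smaller than its lower-left neighbour is false in general; it can fail when $\sigma_i=\sigma_{i+1}$. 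You should drop it and rely on Lemma~\ref{lem: evenutal horizontal} alone.

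The genuine gap is Step~4. Write $\tilde T(i)=\tilde T[1+(i-1)k,ik]$. The converse direction of Theorem~\ref{thm: Structure Thm} has the hypothesis $\prom^k_{\tilde T}(\tilde T(i))=\tilde T(i-1)$ for $2\le i\le p_0+1$. So the $k$-blocks inside the head $\tilde B$ must be carried onto one another by $\prom^k$, and block $p_0+1$ must land exactly on block $p_0$. You check only that $|\tilde B|$ is a multiple of $k$ and that the tail blocks are row-equivalent copies of $S$, and this does not suffice. For instance, take $l=2$ and
\[ S=\ytableaushort{12,3}, \qquad R=\ytableaushort{13,2}. \]
The tableau $R\lsmash S\lsmash S\lsmash\cdots$ has exactly this form with $k=3$. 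However, Lemma~\ref{lem: Non-rectangluar Smashing} gives $\prom^3(R\lsmash S\lsmash S\lsmash\cdots)=S\lsmash S\lsmash\cdots$, which is a different tableau. So periodicity must use what $\tilde B$ actually is, and your Step~4 never does.

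The missing idea, which is the core of the paper's proof, is shift invariance of rectification. For $j\ge 2$, the skew tableaux $T[n+1,n+(j-1)k]$ and $T[n+k+1,n+jk]$ are both smashes of $j-1$ copies of $S$. Their reading words therefore differ by adding $k$ to every letter, and by Theorem~\ref{thm: Uniqueness of Rectified Skew Tableau} their rectifications differ by $k$. Since $\prom^n_T$ and $\prom^{n+k}_T$ produce the respective decremented rectifications, these coincide. That is, $\prom^k_{\tilde T}(\tilde T[k+1,jk])=\tilde T[1,(j-1)k]$ for all $j\ge 2$. Induction then gives $\prom^k_{\tilde T}(\tilde T(i))=\tilde T(i-1)$ for all $i$, which is $\prom^k(\tilde T)=\tilde T$.

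Within your own framework you could equivalently rerun Step~3 with head $B\lsmash S$ of size $n+k$, choosing $i_0$ large enough for both computations. The same shift invariance shows that $\prom^{n+k}(T)$ has the same head $\tilde B$, so $\prom^k(\tilde T)=\prom^{n+k}(T)=\tilde T$.
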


\begin{proof}
    For $T\in\SYT(\mu)$, see Definition \ref{def: smash infty^l}: the condition $\ell(\sigma)=l$ ensures that the left smash results in a complete filling of all of $\mu$, and filling in a sequence of copies of $S\in\SYT(\sigma)$ for a weakly decreasing skew shape $\sigma$ with  $B \lsmash S$ a standard Young tableau ensures that the smashing results in a standard Young tableau $T$ with increasing columns.
    
    Let $j\geq 2$. By construction, $T[n+1,n+(j-1)k]$ and $T[n+k+1,n+jk]$ are both smashes of $j-1$ copies of $S$ 
    (with the first copy of $S$ fixed in place, respectively). The first set of $S$'s are incremented by $n, n+k, \ldots, n + (j-2)k$, respectively, and the second set of $S$'s are incremented by $n+k, n+2k, \ldots, n + (j-1)k$, respectively. As a result, the reading word for $T[n+k+1,n+jk]$ is obtained from the reading word of $T[n+1,n+(j-1)k]$ by incrementing each letter by $k$. 
    It then follows from Theorem \ref{thm: Uniqueness of Rectified Skew Tableau} that 
    the rectification of $T[n+k+1,n+jk]$ is obtained from the rectification of $T[n+1,n+(j-1)k]$ by incrementing each letter by $k$. 
    However, as the definition of the $\prom$ algorithm decrements entries, we see that $\prom_T^n$ implements the decremented rectification of $T[n+1,n+(j-1)k]$ and $\prom_T^{n+k}$ implements the decremented rectification of $T[n+k+1,n+jk]$, and that they are identical.

    Hence, as
    \[ \prom_T^n(T[n+1,n+(j-1)k]) = \tilde{T}[1, (j-1)k],\] 
    we get that 
    \[ \prom_T^{n+k}(T[n+k+1,n+jk]) =  \tilde{T}[1, (j-1)k]\] 
    as well.
    But also observe that
    \[ \prom_T^{n+k}(T[n+k+1,n+jk]) = \prom_{\tilde{T}}^k( \tilde{T}[k+1, jk] ).\]
    We therefore have that
    \[ \prom_{\tilde{T}}^k( \tilde{T}[k+1, jk] ) = \tilde{T}[1, (j-1)k] \]
    for each $j \geq 2$.    
    Using this equation and writing $\tilde{T}(i):=\tilde{T}[1 +(i-1)k, ik]$ for $i\geq 1$ and $\tilde{T}(0) := \varnothing$, it follows from induction that
    \[ \prom_{\tilde{T}}^k(\tilde{T}(i)) = \tilde{T}(i-1) \mbox{ for all $i\ge 1$},\]
    and periodicity follows.

    By Lemma \ref{lem: evenutal horizontal}, there exists $j_0\in\N$ so that the sliding paths of the infinite Young tableaux $T, \prom(T),\ldots, \prom^{n-1}(T)$ all become horizontal at column $j_0$. Let $i_0\in\N$ be such that no box of $T[n+1+(i_0-1)k,n+i_0k]$ is to the left of column $n+j_0$.  Due to the horizontal sliding paths, it follows that no box moves up while applying $\prom^n$ to this or any of the later smashed copies of $S$. In particular, with  $\prom_T^n(T[n+1+(i-1)k,n+ik]) = \tilde{T}(i)$,
    the shift $\tilde{T}(i) - (i-1)k$ is row equivalent to $S$ for all $i \ge i_0$. It is now easy to check that $\tilde{T}$ is the smash of $\tilde{B}$ with infinitely many copies of $S$, where $\tilde{B}$ denotes the decremented rectification of $T[n+1, n+ (i_0-1) k]$.
\end{proof}

Using Lemma \ref{lem: One Lemma to Rule Them All}, we associate to a standard skew Young tableau the rectification of a certain smash of copies of that tableau.

\begin{definition} \label{def: canon rect of smash of copies of S}
    Let $\sigma = \nu_1 / \nu_2$ be a weakly decreasing skew shape,
    $k=|\sigma|$, $l=\ell(\sigma)$, $n=|\nu_2|$, $\mu=(\infty^l)$, $S\in\SYT(\sigma)$, and $B\in\SYT(\nu_2)$. Define $T\in\SYT(\mu)$ by
    \[  T = B \lsmash \left( \biglsmash_{i=1}^\infty S \right). \]
    From the proof of Lemma \ref{lem: One Lemma to Rule Them All}, choose $j_0\in\N$ minimal so that the sliding paths of the infinite Young tableaux $T, \prom(T),\ldots, \prom^{n-1}(T)$ all become horizontal at column $j_0$, and choose $i_0\in\N$ minimal so that no box moves up while applying $\prom^n$ to $T[n+1+(i_0-1)k,n+i_0k]$.
    Define $R(S)$ to be the decremented rectification of $T[n+1, n+ (i_0-1) k]$ (which is a smash of $i_0-1$ copies of $S$ with the first copy of $S$ fixed in place), i.e.,
    \[ R(S) = \prom_T^n(T[n+1, n+ (i_0-1) k]). \]
    Note that this construction is independent of the particular choice of $B\in \SYT(\nu_2)$ by 
    Theorem \ref{thm: Uniqueness of Rectified Skew Tableau}. 
    Also note, by the definition of left smash, that $R(S)$ depends on $S$ only up to row equivalence of $S$.
\end{definition}

We come now to an alternate characterization of periodic tableaux in $\SYT((\infty^l))$ and use the notation $R(S)$ as above.

\begin{theorem}\label{thm: backfill and alt periodic classification}
    Let $l,k\in\N$, $\mu=(\infty^l)$, and $T\in\SYT(\mu)$. 
    Then $T$ is $k$-periodic if and only if $T$ is of the form     
    \[ T = R(S) \lsmash \left( \biglsmash_{i=1}^\infty S \right) \]
    for some $S\in\SYT(\sigma)$ with $\sigma$ a weakly decreasing skew shape contained in $\mu$ satisfying $\ell(\sigma)=l$ and $|\sigma|=k$.
\end{theorem}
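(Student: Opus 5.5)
The plan is to prove the two directions separately, relying on Lemma~\ref{lem: One Lemma to Rule Them All} and Theorem~\ref{thm: Structure Thm}.

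For the ``if'' direction, suppose $T = R(S) \lsmash \left( \biglsmash_{i=1}^\infty S \right)$. Write $\sigma = \nu_1/\nu_2$, pick any $B \in \SYT(\nu_2)$, and set $T_0 = B \lsmash \left( \biglsmash_{i=1}^\infty S\right)$, which is a standard infinite tableau by Lemma~\ref{lem: One Lemma to Rule Them All}. The same lemma, with $n = |\nu_2|$, shows that $\prom^n(T_0)$ is $k$-periodic. It also gives
\[ \prom^n(T_0) = \tilde B \lsmash \left(\biglsmash_{i=1}^\infty S\right), \]
where $\tilde B$ is the decremented rectification of $T_0[n+1, n+(i_0-1)k]$. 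With the minimal choices of $j_0$ and $i_0$ in Definition~\ref{def: canon rect of smash of copies of S}, this $\tilde B$ is exactly $R(S)$, so $\prom^n(T_0) = T$ and $T$ is $k$-periodic.

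One point needs care: the lemma's $i_0$ is any index from which no box moves up, whereas $R(S)$ uses the minimal one. Changing $i_0$ to $i_0+1$ rectifies one more copy of $S$. Because no box of that copy moves up, the rectified copy sits in the same rows as the original, row equivalent to it. So the smash $\tilde B \lsmash (\cdots)$ does not depend on the choice of $i_0$. I would state this as a short observation.

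For the ``only if'' direction, suppose $\prom^k(T) = T$. Theorem~\ref{thm: Periodic gives linear growth} gives $k \ge l$, so Theorem~\ref{thm: Structure Thm} applies. It provides $p_0$ and tableaux $\tilde S(1),\dots,\tilde S(p_0)$ with
\[ T = \tilde S(1)\lsmash\cdots\lsmash \tilde S(p_0-1)\lsmash \biglsmash_{i=p_0}^\infty \tilde S(p_0), \]
where $\sigma(p_0)$ is weakly decreasing, $\ell(\sigma(p_0)) = l$ and $|\sigma(p_0)| = k$.

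Take $S = \tilde S(p_0)$, left-justified within $\mu$ so that it is a skew tableau sitting in shape $\sigma(p_0)$. Set $B := \tilde S(1)\lsmash\cdots\lsmash\tilde S(p_0-1)$, a genuine Young tableau of size $n = (p_0-1)k$, and note $T = B \lsmash \left(\biglsmash S\right)$. Then $T = \prom^{n}(T)$ by periodicity, since $n$ is a multiple of $k$. Applying Lemma~\ref{lem: One Lemma to Rule Them All} to $B$ and $S$ expresses $\prom^n(T)$ as $\tilde B \lsmash \left(\biglsmash S\right)$, with $\tilde B$ a decremented rectification of some copies of $S$.

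Now I identify $\tilde B$ with $R(S)$. Theorem~\ref{thm: Uniqueness of Rectified Skew Tableau} makes the rectification of a smash of $i_0-1$ copies of $S$ independent of the base tableau. Since the lemma allows $B$ to have shape $\lambda$ different from $\nu_2$, I would check that the copies of $S$ in $B\lsmash(\biglsmash S)$ and in $B'\lsmash(\biglsmash S)$ with $B' \in \SYT(\nu_2)$ form the same skew tableau up to shift and row equivalence. They do: after the first copy, the smash of the remaining copies is determined by $S$ alone. Together with the stabilization observation above, this gives $\tilde B = R(S)$.

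I expect the main obstacle to be this well-definedness and matching: showing that the $i_0$ and the base $B$ arising from an arbitrary periodic $T$ give the same rectified prefix as the canonical $R(S)$. The key tool is that rectifying an additional copy of $S$ whose boxes never move up is absorbed into the tail $\biglsmash S$ up to row equivalence. I would isolate this as a short claim before running both directions.
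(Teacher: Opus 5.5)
Your proposal is correct and follows the paper's proof. The ``if'' direction is Lemma~\ref{lem: One Lemma to Rule Them All} applied to $B\lsmash(\biglsmash S)$ with $B\in\SYT(\nu_2)$. The ``only if'' direction uses Theorem~\ref{thm: Structure Thm} to write $T=B\lsmash(\biglsmash S)$ with $k\mid |B|$, then applies $\prom^{|B|}$ and the same lemma. Your extra checks fill in details the paper leaves implicit: that $k\ge l$, so the structure theorem applies, and that $\tilde B\lsmash(\biglsmash S)$ is unchanged when $i_0$ increases, so it matches $R(S)$. Your concern about $B$ having a shape other than $\nu_2$ does not arise, since $B=T[1,(p_0-1)k]$ has exactly the inner shape of $\sigma(p_0)$.
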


\begin{proof}
    Lemma \ref{lem: One Lemma to Rule Them All} shows that all $T$ as constructed above are $k$-periodic.

    Now suppose $T$ is $k$-periodic. From Theorem \ref{thm: Structure Thm}, $T$ is the left smash of a standard Young tableau $R$ with $k\mid |R|$ and an infinite number of copies of a standard skew Young tableau $S$ with $|S|=k$. Finish the proof by applying $\prom^{|R|}$ to $T$, using Lemma \ref{lem: One Lemma to Rule Them All}.
\end{proof}

As already observed in Remark \ref{rem: weak extension}, any standard Young tableau $R \in \SYT(\lambda)$ with $\lambda'_1 \le l$ can be extended to a periodic infinite standard Young tableau $T \in \SYT((\infty^l))$. However, the periodicity of $T$ may be larger than the size of the tableau $|R|$. Below, in Corollary~\ref{cor: fine extension}, we show that, in fact, for $\lambda_1' \leq l \leq |R|$ it is possible to obtain an extension with period $|R|$. We begin with a lemma.

\begin{lemma}\label{lem: un-rectify to full height}
    Let $n,l \in \N$, $\lambda \vdash n$ with $\lambda_1'\leq l\leq n$, and $R \in \SYT(\lambda)$.
    Then there is a weakly decreasing skew shape $\sigma$ with $\ell(\sigma) = l$ and $S \in \SYT(\sigma)$ such that $R$ is the rectification of~$S$.
\end{lemma}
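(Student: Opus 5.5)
The plan is to build $S$ directly so that its reading word is \emph{literally equal} to the reading word of $R$. Then Theorem~\ref{thm: Uniqueness of Rectified Skew Tableau} (Knuth equivalence of reading words implies equal rectifications) shows that $S$ rectifies to $R$, since $R$ is its own rectification. The device is a skew shape whose rows occupy pairwise disjoint sets of columns, arranged as a descending staircase of row segments. For such a shape there are no column constraints at all. Hence any word cut into $l$ increasing consecutive segments $w = u_l u_{l-1}\cdots u_1$ can be placed with $u_i$ in row $i$. The only remaining requirement is that the segment lengths $c_i = |u_i|$ satisfy $c_1 \ge c_2 \ge \dots \ge c_l \ge 1$, which makes $\sigma$ weakly decreasing with $\ell(\sigma)=l$.

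First I will set up the shape. Given lengths $c_1\ge\dots\ge c_l\ge 1$, put $o_i = c_{i+1}+\dots+c_l$ (so $o_l=0$), $\nu_2 = (o_1,\dots,o_l)$ and $\nu_1=(o_1+c_1,\dots,o_l+c_l)$. Then $\nu_2$ is a partition. Also $\nu_1$ is a partition, because $(\nu_1)_i = o_{i+1}+c_{i+1}+c_i \ge o_{i+1}+c_{i+1} = (\nu_1)_{i+1}$. Row $i$ of $\sigma=\nu_1/\nu_2$ occupies columns $o_i+1,\dots,o_i+c_i$, and these column sets are disjoint and decrease as $i$ increases. Consequently no two boxes of $\sigma$ share a column, filling row $i$ with $u_i$ gives a standard skew tableau $S$, and its reading word (bottom row to top row) is $u_l\cdots u_1 = w$.

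Next I will produce the segmentation. Let $m=\lambda_1'$. The reading word of $R$ already splits into the $m$ increasing rows of $R$, read bottom to top, with lengths $\lambda_m \le \lambda_{m-1}\le\dots\le\lambda_1$. This is nondecreasing in reading order, which is exactly what we need (the last segment goes into row $1$). To increase the number of segments from $m$ up to any $l\le n$, I will refine one step at a time. Choose the first segment in reading order of length $c\ge 2$; all earlier segments have length $1$. Split it into an initial piece of length $1$ followed by a piece of length $c-1$. Both pieces are still increasing runs. The lengths in reading order then read $1,\dots,1,1,c-1,\dots$, which stays nondecreasing because the next segment has length $\ge c > c-1$. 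Each step adds exactly one segment, and steps remain possible until all $n$ segments have length $1$. Hence every $l$ with $m\le l\le n$ is reached.

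The only real obstacle is this length-monotonicity in the refinement step: the naive choice of splitting the longest row can break the weakly decreasing condition. Splitting off a single letter at the front of the first non-singleton segment avoids the problem. With that in place, the construction together with Theorem~\ref{thm: Uniqueness of Rectified Skew Tableau} completes the argument.
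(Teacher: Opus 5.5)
Your proposal is correct and is essentially the paper's construction. The paper's Step 1 (sliding the rows of $R$ into a staircase with exactly one box per column) is your disjoint-column shape for $l=\lambda_1'$, and its Step 2 (lowering the single-box rows and dropping the leftmost box of the lowest multi-box row) is exactly your split of the first non-singleton segment in reading order. The only differences are in the verification: you certify the rectification through literally equal reading words and Theorem~\ref{thm: Uniqueness of Rectified Skew Tableau}, and you check the weak-decrease invariant explicitly, whereas the paper simply asserts that jeu de taquin slides lead from $S$ back to $R'$ and then to $R$.
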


\begin{proof} The standard skew Young tableau $S \in \SYT(\sigma)$ results from applying the following construction steps to $R \in \SYT(\lambda)$.

\underline{Step 1:} Construct the standard skew Young tableau $R'$ by sliding each row of $R$ just enough to the right so that no two boxes are in the same column. More precisely, let $R' \sim R$ such that, for each $1\le j\le n$, $R'$ has exactly one box in column $j$, see Figure \ref{subfig:4.10b}.

\underline{Step 2:}
To construct $S$ from $R'$, repeatedly increase the height of the current Young diagram as needed by performing the following two operations: first, lower every row that contains only a single box by one level; second, locate the lowest row that contains more than one box and lower its first (leftmost) box to the row directly below. 
Repeat until you reach a skew shape $\sigma$ with $\ell(\sigma) = l$ and the skew tableau $S \in \SYT(\sigma)$, see Figure \ref{subfig:4.10c}.

By construction, $\sigma$ is weakly decreasing, and it is easy to check that jeu de taquin slides lead from $S$ back to $R'$ and from $R'$ back to $R$ as the rectification of $S$. 
\end{proof}    

\begin{figure}[H]
     \centering
     \ytableausetup{boxsize=1.6em} 
     \begin{subfigure}[b]{0.4\textwidth}
         \centering
         \ytableaushort{13,24,5}
         \caption{The tableau $R \in \SYT(\lambda)$.}
         \label{subfig:4.10a}
     \end{subfigure}
     \qquad
     \begin{subfigure}[b]{0.4\textwidth}
         \centering
        \begin{ytableau}
            \none & \none & \none & 1 & 3 \\
            \none & 2 & 4 \\
            5
        \end{ytableau}
         \caption{Step 1: The skew tableau $R'$.}
         \label{subfig:4.10b}
     \end{subfigure}
     
\vspace{1em}

     \begin{subfigure}[b]{0.4\textwidth}
         \centering
        \begin{ytableau}
            \none & \none & \none & 1 & 3 \\
            \none & \none & 4 \\
            \none & 2 \\
            5
        \end{ytableau}
         \caption{Step 2: The skew tableau $S$.}
         \label{subfig:4.10c}
     \end{subfigure}
     \caption{Example where $n=5$, $l=4$, and $\lambda = (2,2,1)$.}
     \label{fig:Lemma4.10}
\end{figure}

We immediately get the following periodic extension result.

\begin{corollary}\label{cor: fine extension}
    Let $n,l \in \N$, $\lambda \vdash n$ with $\lambda_1'\leq l\leq n$, $\mu = (\infty^l)$, and $R \in \SYT(\lambda)$.
    Then there is some $T \in \SYT(\mu)$ such that $T|_\lambda = R$ and $\jdt^n(T) = T$.
\end{corollary}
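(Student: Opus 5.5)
We combine Lemma~\ref{lem: un-rectify to full height} with Theorem~\ref{thm: backfill and alt periodic classification}, or more precisely with the explicit form of $R(S)$ from Definition~\ref{def: canon rect of smash of copies of S}. First, apply Lemma~\ref{lem: un-rectify to full height} to $R$. This gives a weakly decreasing skew shape $\sigma=\nu_1/\nu_2$ with $\ell(\sigma)=l$ and some $S\in\SYT(\sigma)$ whose rectification is $R$. Note that $|\sigma|=n$. Then set
\[ T = R(S) \lsmash \left( \biglsmash_{i=1}^\infty S \right). \]
By Theorem~\ref{thm: backfill and alt periodic classification}, with $k=|\sigma|=n$, this $T$ lies in $\SYT(\mu)$ and satisfies $\prom^n(T)=T$. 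One small check is that left justification of $S$ is standard, so that the smash is legitimate in the sense of Remark~\ref{rem: can smash with skew too}. For the $S$ built in Lemma~\ref{lem: un-rectify to full height} this holds, since each row sits strictly to the right of all rows below it.

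The remaining task is to show $T|_\lambda=R$, i.e. that $T$ restricted to its entries $[1,n]$ is exactly $R$. By periodicity $T[1,n]=\tilde T(1)$ in the notation of the proof of Lemma~\ref{lem: One Lemma to Rule Them All}. Here $\tilde T=\prom^{m}(T')$ with $T'=B\lsmash(\biglsmash S)$ and $m=|\nu_2|$. Thus $T[1,n]=\prom^m_{T'}(T'[m+1,m+n])$, which is the decremented rectification of the first copy of $S$ sitting inside $T'$ at its proper skew position $\sigma=\nu_1/\nu_2$.

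The key point is that applying $\prom^m$ to $T'$ carries out jeu de taquin slides into the inner corners occupied by $B$. These slides only involve boxes with entries larger than those being removed. Restricted to the boxes of the first copy of $S$, whose entries $m+1,\dots,m+n$ are the smallest among those remaining, they are genuine jeu de taquin slides of that skew tableau. The larger entries from later copies never interfere with the relative motion of smaller entries in a slide. After $m$ steps the boxes of $B$ are all gone, so these $n$ entries occupy a straight shape of size $n$. That shape is therefore the rectification of $S$ (shifted by $m$, then decremented by $m$). By Theorem~\ref{thm: Uniqueness of Rectified Skew Tableau} this is $R$, and hence $T|_\lambda=R$.

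The main obstacle is making the claim in the previous paragraph rigorous. We must justify that the movement of the smallest-entry skew subtableau under $\prom$ on the infinite tableau is exactly a sequence of jeu de taquin slides on that subtableau alone. We also must justify that after $m$ iterations the subtableau has straight shape. For the first point I would argue that in a slide the comparison at each step of the empty box involves the minimum of two neighbors, so restricting to an initial segment of entries gives the slide of the restricted tableau. For the second, after $\prom^m$ the entries $1,\dots,n$ of a standard tableau automatically form a straight shape.
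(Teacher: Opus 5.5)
Your proposal is correct and matches the paper's proof, which simply applies Lemma~\ref{lem: un-rectify to full height} and then Theorem~\ref{thm: backfill and alt periodic classification}. Your third paragraph usefully spells out what the paper leaves implicit: $T[1,n]=\prom^{m}_{T'}(T'[m+1,m+n])$ is the decremented rectification of the first copy of $S$, and hence equals $R$.

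One correction: your ``small check'' is false as stated, but it is also unnecessary. Left-justifying the $S$ of Figure~\ref{fig:Lemma4.10} gives first column $1,4,2,5$, which is not increasing. Standardness of $T$ already follows from $T=\prom^{m}(B\lsmash(\biglsmash_{i=1}^{\infty}S))$ with $B\in\SYT(\nu_2)$, since $B\lsmash S$ is just $B$ together with $S+m$ filling $\nu_1$.
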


\begin{proof}
Apply Lemma \ref{lem: un-rectify to full height} to $R$ for some $S\in\SYT(\sigma)$ with $\sigma$ a weakly decreasing skew shape with $\ell(\sigma)=l$ and $|\sigma|=n$. Next apply Theorem \ref{thm: backfill and alt periodic classification} to $S$ to get a suitable $T \in \SYT(\mu)$.
\end{proof}

We end with a result on pre-periodic behavior.

\begin{definition} \label{def: pre-periodic}
    Let $\mu$ be an infinite shape and $T\in\SYT(\mu)$. We say that $T$ is \emph{pre-periodic} with \emph{eventual period} $k$, also written \emph{$k$-pre-periodic}, if $\prom^N(T)$ is $k$-periodic for some $N\in\N$.
\end{definition}

The following can be thought of as a partial converse of Lemma \ref{lem: One Lemma to Rule Them All} and gives a characterization of pre-periodic infinite standard Young tableaux.

\begin{theorem} \label{Thm: Characterization of Pre-periodic}
    Let $l,k \in \N$, $\mu = (\infty^l)$, and $T \in \SYT(\mu)$. 
    Then $T$ is $k$-pre-periodic if and only if there is
    a weakly decreasing skew shape $\sigma$ contained in $\mu$ with $\ell(\sigma)=l$ and $|\sigma|=k$, $S\in\SYT(\sigma)$, $n\in\N$, $\lambda\vdash n$ contained in $\mu$, and $B\in\SYT(\lambda)$ such that $B \lsmash S$ is a standard Young tableau and 
    \[T = B \lsmash \left( \biglsmash_{i=1}^\infty S \right).\]
\end{theorem}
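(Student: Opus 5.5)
The easy direction ($\Leftarrow$) is immediate from Lemma \ref{lem: One Lemma to Rule Them All}. If $T = B \lsmash \left( \biglsmash_{i=1}^\infty S \right)$ with the stated hypotheses on $\sigma$, $S$, $\lambda$, $B$, that lemma gives $T\in\SYT(\mu)$ and shows that $\prom^{n}(T)$ is $k$-periodic. Hence $T$ is $k$-pre-periodic with $N=n$. (If one insists on $N\in\N$ while $\lambda$ may be empty, then $n\ge 1$ is already built in; otherwise we take $N=k$, using that $\prom^{k}$ of a $k$-periodic tableau is again $k$-periodic.)

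For the converse, suppose $\prom^N(T)$ is $k$-periodic. By Theorem \ref{thm: Structure Thm}, applied to $\prom^N(T)$ (which also gives $k\ge l$ via Theorem \ref{thm: Periodic gives linear growth}), $\prom^N(T)$ is a left smash of an initial tableau followed by infinitely many copies of a fixed $\tilde S(p_0)$. Call this fixed tableau $S_0$; it has weakly decreasing shape $\sigma_0$ with $\ell(\sigma_0)=l$ and $|\sigma_0|=k$. The plan is to pull this eventual smash structure back through $N$ applications of $\prom$ to $T$ itself. By Lemma \ref{lem: evenutal horizontal}, choose a column $j_0$ beyond which the sliding paths of $T,\prom(T),\dots,\prom^{N-1}(T)$ are all horizontal. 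Beyond that column, each of these $\prom$ steps acts on a box only by shifting it left within its row and decrementing it by $1$.

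The key observation is the following. Take an index $m$ large enough that every box of $T$ with entry $>m$ lies to the right of column $j_0+N$. Then the skew subtableau $T[m+1,\infty)$ is obtained from $\prom^N(T)[m+1-N,\infty)$ by adding $N$ to every entry and shifting boxes back to the right within their rows. Row membership is preserved, and in each row the relative order of entries is unchanged. Since the tail of $\prom^N(T)$ is periodic row by row, meaning that each block $[1+(i-1)k,ik]$ is row equivalent to $S_0$ after shifting, the tail of $T$ is periodic in the same row-equivalence sense with period $k$. Choose $m$ to be $N$ plus a multiple of $k$ that is at least $p_0 k$. Set $n:=m$, let $B:=T[1,n]$, and let $S$ be $S_0$ placed in row-equivalent form (the row equivalence class is all that the left smash sees, as noted after Definition \ref{def: canon rect of smash of copies of S}). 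Then $T$ agrees entrywise with $B\lsmash\left(\biglsmash_{i=1}^\infty S\right)$. Here $B=T[1,n]$ is a genuine standard Young tableau because entries $1,\dots,n$ of a standard tableau always fill a Young diagram $\lambda$, and $B\lsmash S$ is standard because it is an initial segment of $T$.

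The main obstacle is this middle step: verifying carefully that the horizontal-path region yields exact row equivalence of the blocks of $T$, with block boundaries aligned at multiples of $k$ offset by $N$. The boxes of a single block sit in different rows and therefore shift by different amounts across the $N$ steps, so what must be checked is that the left-smash description, which only records row contents and row lengths, is unaffected by these shifts. I would handle this by arguing row by row. In a fixed row $r$, the entries of $T$ beyond column $j_0+N$ are exactly the corresponding entries of $\prom^N(T)$ plus $N$, listed in the same order. So the set of values of $T$ in row $r$ that fall in $[n+1+(i-1)k,\,n+ik]$ equals the set of values of $\prom^N(T)$ in row $r$ that fall in the corresponding interval, shifted by $N$. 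This is precisely the row-equivalence condition, and it finishes the argument.
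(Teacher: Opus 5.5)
Your proof is correct and follows essentially the same route as the paper. The ``if'' direction is Lemma~\ref{lem: One Lemma to Rule Them All}. For ``only if'', you use eventually horizontal sliding paths of $T,\dots,\jdt^{N-1}(T)$ to show that past a cutoff $N+qk$ the $k$-blocks of $T$ are, after shifting, row equivalent to the corresponding $k$-blocks of $\jdt^N(T)$, and periodicity makes those blocks all row equivalent to one another.

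The one difference is where you get the periodic tail of $\jdt^N(T)$, together with weak decreasingness and $\ell(\sigma)=l$. You import these from Theorem~\ref{thm: Structure Thm}. The paper instead derives the block row equivalence directly from horizontal sliding paths over the whole orbit segment $T,\dots,\jdt^{N+k-1}(T)$, and proves weak decreasingness from column-strictness of $T$.
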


\begin{proof}
    Lemma \ref{lem: One Lemma to Rule Them All} takes care of the ``if'' direction of the proof. For the ``only if'' direction, suppose $\prom^N(T)$ is periodic with period $k$. Then there are at most $N+k$ distinct sliding paths corresponding to the orbit of $T$ under $\prom$. By Lemma \ref{lem: evenutal horizontal}, choose $j_0 \in\N$ so that each of these sliding paths becomes horizontal at column $j_0$. 
    
    For $i\in \N$, write \[T(i):=T[N+1+(i-1)k, N +ik] \quad \mbox{ and } \quad
    T^N(i):=\prom^N(T)[1+(i-1)k,ik].\]
    Let $i_0\in \N$ be such that no box of $T^N(i_0)$ is to the left of column $j_0$, and let $i\ge i_0+1$.
    By $k$-periodicity and horizontal sliding paths, applying $\prom^k$ to $\prom^N(T)$ horizontally slides $T^N(i)$ to $T^N(i-1)$ with a decrement of $k$. Moreover, by the horizontal sliding paths, $T^N(i-1)$ is row equivalent to $T(i-1)-N$.
    Thus, with $B:=T[1,N+(i_0-1)k]$ and $S:=T(i_0)-(N+ (i_0-1)k)$, we have
    \[T = B \lsmash \left( \biglsmash_{i=1}^\infty S \right).\]
    Moreover, the shape of $S$ must be weakly decreasing or else the columns of $T$ would eventually not be increasing as the entries in the smash of any higher, shorter row would eventually become larger than the entries in any lower, longer row.
    Finally, by construction, $B\lsmash S$ is standard.    
\end{proof}


\section{Chaotic Behavior of $\prom$} \label{sec: chaotic}




In this section, we investigate the set of infinite standard Young tableaux of a fixed infinite shape $\mu$ and the jeu de taquin algorithm from a topological perspective by endowing $\SYT(\mu)$ with a metric.

\begin{theorem} \label{thm: Continuity of prom}
    Let $\mu$ be an infinite shape. 
    For $T,U\in\SYT(\mu)$, the following defines a metric on $\SYT(\mu)$:
    \[ \operatorname{d}(T,U) = \sum_{\{(i,j) \,\mid\, T_{i,j} \neq U_{i,j}\}} 2^{-(i+j)}. \]
    With respect to this metric,
    \[ \prom: \SYT(\mu) \rightarrow \SYT(\mu) \]
    is uniformly continuous.
\end{theorem}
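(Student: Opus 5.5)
The proof splits into two parts: first verify the metric axioms for $\operatorname{d}$, then establish uniform continuity of $\prom$ through a locality property of the sliding path.

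\textbf{Metric axioms.} The sum is dominated by $\sum_{i,j\ge1}2^{-(i+j)}=1$, so $\operatorname{d}$ is finite. It is symmetric and nonnegative, and it vanishes exactly when $T_{i,j}=U_{i,j}$ for all boxes, that is, when $T=U$. For the triangle inequality, note that if $T_{i,j}\neq W_{i,j}$ then $T_{i,j}\neq U_{i,j}$ or $U_{i,j}\neq W_{i,j}$. Hence the index set for $\operatorname{d}(T,W)$ lies inside the union of the index sets for $\operatorname{d}(T,U)$ and $\operatorname{d}(U,W)$, and summing the positive weights gives $\operatorname{d}(T,W)\le \operatorname{d}(T,U)+\operatorname{d}(U,W)$.

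\textbf{Key observation: agreement on a triangle.} Let $\Delta_m=\{(i,j)\in\mu \mid i+j\le m\}$. If $T$ and $U$ agree on $\Delta_m$, then $\operatorname{d}(T,U)\le\sum_{i+j>m}2^{-(i+j)}=\sum_{s>m}(s-1)2^{-s}$, which tends to $0$ as $m\to\infty$. Conversely, if $\operatorname{d}(T,U)<2^{-m}$, then $T$ and $U$ agree on $\Delta_m$, since any disagreement at $(i,j)$ with $i+j\le m$ contributes at least $2^{-m}$.

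\textbf{Locality of $\prom$.} The claim is that if $T$ and $U$ agree on $\Delta_{m+1}$, then $\prom(T)$ and $\prom(U)$ agree on $\Delta_m$. This is the main step to be careful about, and I would prove it by induction along the sliding path. The path starts at $\operatorname{p}_1=(1,1)$ for both tableaux. If $\operatorname{p}_n(T)=\operatorname{p}_n(U)=(i,j)$ with $i+j\le m$, then the next step is decided by comparing the entries at $(i,j+1)$ and $(i+1,j)$. Both of these lie in $\Delta_{m+1}$, so $T$ and $U$ choose the same step. It follows that the two sliding paths coincide on all positions with $i+j\le m+1$.

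Now fix a box $(i,j)\in\Delta_m$. Its entry in $\prom(T)$ is either $T_{i,j}-1$, when $(i,j)$ is not on the path, or $T_{\operatorname{p}_{n+1}}-1$, when $(i,j)=\operatorname{p}_n$. In the second case, $\operatorname{p}_{n+1}$ lies in $\Delta_{m+1}$. In either case the new entry is determined by the data on $\Delta_{m+1}$, which $T$ and $U$ share, so $\prom(T)$ and $\prom(U)$ agree at $(i,j)$.

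\textbf{Uniform continuity.} Given $\varepsilon>0$, choose $m$ so that $\sum_{s>m}(s-1)2^{-s}<\varepsilon$, and set $\delta=2^{-(m+1)}$. If $\operatorname{d}(T,U)<\delta$, then $T$ and $U$ agree on $\Delta_{m+1}$. By locality, $\prom(T)$ and $\prom(U)$ agree on $\Delta_m$, so $\operatorname{d}(\prom(T),\prom(U))<\varepsilon$. Since $\delta$ depends only on $\varepsilon$, this is uniform continuity.

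The argument is uniform across all three infinite shapes. The only place the shape enters is the requirement that the neighbours considered actually exist in $\mu$. For $(\infty^l)$ and $(l^\infty)$, a step may be forced because one neighbour is absent, but whether a neighbour is absent depends only on the shape, so the forced step is the same for $T$ and $U$.
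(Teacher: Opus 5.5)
Your proof is correct and follows essentially the same route as the paper. You both show that closeness in $\operatorname{d}$ forces agreement on a finite initial region, that $\jdt$ on a slightly smaller region is determined by this data because the sliding path only moves right or down, and that agreement there makes the images close. The difference is cosmetic: you use the antidiagonal triangles $\Delta_m$ where the paper uses the rectangles $((N+1)^{M'})$ and $(N^M)$. Your choice matches the indexing $\operatorname{p}_n(T)\in\{i+j=n+1\}$ and treats $(\infty^l)$, $(l^\infty)$ and $(\infty^\infty)$ uniformly, and you also write out the locality step, which the paper only asserts.
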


\begin{proof}
    Verifying that $\operatorname{d}$ is a metric is straightforward. For example, the triangle inequality follows from the following observation: if $T,U,V\in\SYT(\mu)$ and $T$ and $V$ differ in the $(i,j)$ box, then either $T$ and $U$ or $U$ and $V$ also differ in the $(i,j)$ box. The rest of the argument is omitted. 
    
    For uniform continuity, 
    fix $\epsilon > 0$ and $N \in \N$ with $\sum_{(i,j)\in \mu \setminus (N^M)}  2^{-(i+j)} < \epsilon$,  where $M := N$ if $ \mu = (\infty^\infty)$ and $M := l$ if $\mu = (\infty^l)$. Choose $\delta > 0$ such that $\delta < 2^{-(N+M'+1)}$, where $M' := N+1$ if $\mu = (\infty^\infty)$ and $M' := l$ if $\mu = (\infty^l)$.
    Observe that if $ \operatorname{d}(T,U) < \delta$, then $T|_{((N+1)^{M'})} = U|_{((N+1)^{M'})}$.
    Hence $\prom(T)|_{(N^{M})} = \prom(U)|_{(N^{M})}$, and $ \operatorname{d}(\prom(T), \prom(U)) < \epsilon$.
\end{proof}

For an infinite shape $\mu$, excluding the trivial cases $(\infty^1)$ and $(1^\infty)$, we note that the topology on $\SYT(\mu)$ is not compact as it admits an embedding of $\N$ with the trivial subspace topology via any map that sends $n\in\N$ to some $T \in \SYT(\mu)$ with $T_{1,1} = 1$ and $T_{1,2} = n$.

\begin{definition}
    Let $(X,\operatorname{d})$ be a metric space.
    We say that a continuous map $f : X \rightarrow X$ is \emph{chaotic} in the sense of Devaney \cite{MR1046376} if the following conditions are satisfied:
    \begin{enumerate}
        \item $f$ is \emph{transitive}, i.e., for all non-empty open subsets $V,W$ of $X$, there is some $n \in \N$ such that $f^n(V) \cap W \neq \emptyset$,

        \item the periodic points of $f$ are dense in $X$, and

        \item $f$ has \textit{sensitive dependence on initial conditions}, i.e., there is some $\delta > 0$ such that for every $x \in X$ and every neighborhood $N$ of $x$, there is some $y \in N$ and some $n \in \N$ such that $\operatorname{d}(f^n(x),f^n(y)) > \delta$.
    \end{enumerate}
\end{definition}

While the sensitivity condition captures the essence of what it means to be chaotic, it is well known that it follows from the first two defining conditions.
We record this result as a theorem below.

\begin{theorem}[\cite{MR1157223}]
    If $f : X \rightarrow X$ is transitive and has dense periodic points, then $f$ has sensitive dependence on initial conditions.
\end{theorem}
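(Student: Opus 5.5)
The plan is to follow the classical argument of Banks, Brooks, Cairns, Davis and Stacey. One hypothesis is implicit in the statement: $X$ must be infinite, or at least not a single periodic orbit. Otherwise a finite periodic orbit is transitive and has dense periodic points, yet is not sensitive. In our application this is harmless, since $\SYT(\mu)$ is uncountable by Remark~\ref{rem: uncountable SYT}. Continuity of $f$ is also used, to make certain preimages open.

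\textbf{Step 1: a uniform constant $\delta_0$.} Since $X$ is not a single periodic orbit and periodic points are dense, there are two periodic points $q_1,q_2$ whose orbits $O(q_1)$ and $O(q_2)$ are disjoint. Both orbits are finite, so
\[ \delta_0 := \operatorname{d}(O(q_1),O(q_2)) > 0. \]
For any $x\in X$, the triangle inequality gives $\operatorname{d}(x,O(q_1))\ge \delta_0/2$ or $\operatorname{d}(x,O(q_2))\ge \delta_0/2$. So for every $x$ there is a periodic $q$ with $\operatorname{d}(x,O(q))\ge \delta_0/2$. Set $\delta=\delta_0/8$, so that $\operatorname{d}(x,O(q))\ge 4\delta$.

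\textbf{Step 2: a periodic point and a returning point near $x$.} Fix $x$ and a neighborhood $N$ of $x$. Choose $0<\varepsilon<\delta$ with $B(x,\varepsilon)\subseteq N$. By density, pick a periodic $p\in B(x,\varepsilon)$, say of period $n$. Take $q$ periodic with $\operatorname{d}(x,O(q))\ge 4\delta$ as in Step~1. Define
\[ V=\bigcap_{i=0}^{n} f^{-i}\bigl(B(f^i(q),\delta)\bigr). \]
This set is open by continuity and contains $q$, hence is nonempty. By transitivity there are $y\in B(x,\varepsilon)$ and $k\in\N$ with $f^k(y)\in V$.

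\textbf{Step 3: separation at a well-chosen time.} Choose $j$ with $0\le nj-k\le n$. Then
\[ f^{nj}(y)=f^{nj-k}(f^k(y))\in B\bigl(f^{nj-k}(q),\delta\bigr), \qquad f^{nj}(p)=p. \]
Using the triangle inequality and $\operatorname{d}(x,f^{nj-k}(q))\ge 4\delta$, we get
\[ \operatorname{d}\bigl(f^{nj}(p),f^{nj}(y)\bigr)\;\ge\; 4\delta-\delta-\varepsilon \;>\; 2\delta. \]
Hence either $\operatorname{d}(f^{nj}(x),f^{nj}(p))>\delta$ or $\operatorname{d}(f^{nj}(x),f^{nj}(y))>\delta$. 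Both $p$ and $y$ lie in $N$, and $nj\in\N$ because $nj\ge k\ge 1$. This gives sensitivity with constant $\delta$.

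\textbf{Main obstacle.} The delicate point is Step~1, the choice of a constant $\delta_0$ that works uniformly for all $x$. This is exactly where the non-degeneracy of $X$ (not a single periodic orbit) is needed. The remaining steps are direct triangle-inequality bookkeeping, plus the trick of intersecting $n+1$ preimages so that the hitting time $k$ can be rounded up to a multiple of the period $n$.
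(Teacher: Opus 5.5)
Your argument is correct and is essentially the Banks--Brooks--Cairns--Davis--Stacey proof from \cite{MR1157223}, which the paper cites rather than reproves. You are also right that the statement as quoted tacitly needs continuity of $f$ and needs $X$ not to be a single periodic orbit (under the hypotheses this is equivalent to $X$ being infinite); one correction to your side remark is that Remark~\ref{rem: uncountable SYT} gives uncountability only for $l\ge 2$, and since $\SYT((\infty^1))$ is a singleton, sensitivity (hence Devaney chaos as defined) genuinely fails there, so the application requires $l\ge 2$.
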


Theorem \ref{thm: Continuity of prom} shows that, in particular, $\jdt : \SYT((\infty^l)) \rightarrow \SYT((\infty^l))$ is a continuous map between metric spaces.
It is natural to ask if jeu de taquin defines a chaotic system on infinite standard Young tableaux.
We answer this question in the affirmative.

\begin{theorem}\label{thm: jdt is chaotic}
    For $l \in \N$ and $\mu = (\infty ^l)$, $\prom : \SYT(\mu) \rightarrow \SYT(\mu)$ is chaotic in the sense of Devaney.
\end{theorem}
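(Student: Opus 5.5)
The plan is to check transitivity and density of periodic points directly. Sensitive dependence then follows from the theorem of \cite{MR1157223} quoted above. For $l=1$ the space $\SYT((\infty^1))$ is a single point fixed by $\prom$, so all conditions hold trivially; assume $l\ge 2$.

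\textbf{Step 1: a basis of cylinder sets.} For $T\in\SYT(\mu)$ and $m\in\N$, the restriction $T[1,m]$ is a genuine standard Young tableau. Its shape $\lambda$ is a partition with $\lambda_1'\le l$, because $\{1,\dots,m\}$ always fills an order ideal of $\mu$. Let
\[ C(T,m)=\{U\in\SYT(\mu)\mid U[1,m]=T[1,m]\}. \]
Given $N$, choose $m$ so large that every box of the rectangle $(N^l)$ carries an entry of $T$ that is at most $m$. Then every $U\in C(T,m)$ agrees with $T$ on $(N^l)$, so $\operatorname{d}(T,U)\le\sum_{(i,j)\notin(N^l)}2^{-(i+j)}$, which tends to $0$ as $N\to\infty$. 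Hence every open ball around $T$ contains some $C(T,m)$, and every nonempty open set contains such a cylinder.

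This is the step I expect to need the most care. The naive choice $T|_{(N^l)}$ is a filling of a rectangle whose entries need not be $\{1,\dots,n\}$, so it is not an element of $\SYT$. The earlier extension and smashing results only apply to honest standard Young tableaux, which is exactly what $T[1,m]$ is.

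\textbf{Step 2: dense periodic points.} Let $R=T[1,m]\in\SYT(\lambda)$ with $\lambda_1'\le l$. Remark~\ref{rem: weak extension} gives a periodic $P\in\SYT(\mu)$ with $P|_\lambda=R$. Since $P$ is a bijective $\N$-filling, $P|_\lambda=R$ forces $P[1,m]=R$, so $P\in C(T,m)$. Combined with Step 1, this shows periodic points are dense.

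\textbf{Step 3: transitivity.} Let $V\supseteq C(T_1,m_1)$ and $W\supseteq C(T_2,m_2)$, and set $R_1=T_1[1,m_1]$, a standard Young tableau of size $m_1$ with at most $l$ rows.
\begin{itemize}
\item Form $U=R_1\lsmash T_2$ as in Remark~\ref{rem: smash variant tableau and infinite tableau}. By construction the entries $1,\dots,m_1$ of $U$ occupy exactly $R_1$, so $U\in C(T_1,m_1)\subseteq V$.
\item Iterating Lemma~\ref{lem: Non-rectangluar Smashing} gives $\prom^j(U)=\prom^j(R_1)\lsmash T_2$ for $j\le m_1$. Using $\prom^{m_1}(R_1)=\varnothing$ and the convention $\varnothing\lsmash T_2=T_2$, we get $\prom^{m_1}(U)=T_2\in W$.
\item Hence $\prom^{m_1}(V)\cap W\neq\emptyset$.
\end{itemize}

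One point needs checking in the second item: Lemma~\ref{lem: Non-rectangluar Smashing} must apply at every intermediate stage. Each $\prom^j(R_1)$ is again a standard Young tableau with at most $l$ rows, so the hypotheses hold throughout. If $m_1=0$ is ever relevant, take $n=1$ instead and smash a single box in front.

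With transitivity and dense periodic points established, \cite{MR1157223} yields sensitive dependence on initial conditions, and $\prom$ is chaotic in the sense of Devaney.
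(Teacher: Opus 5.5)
Your argument is correct for $l\ge 2$. It differs from the paper's mainly in how transitivity is obtained.

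The paper builds a single universal tableau by smashing together every element of $\SYT((m^l))$ for every $m$. It uses Lemma~\ref{lem: Non-rectangluar Smashing} to see that this orbit restricts to every finite standard Young tableau with at most $l$ rows, and then extracts transitivity from two visit times $k_1<k_2$. You instead take $U=R_1\lsmash T_2$ and get $\prom^{m_1}(U)=T_2$ exactly. This is shorter and proves more: $\prom^{m}$ maps every cylinder $C(T,m)$ onto all of $\SYT(\mu)$. Since $\prom$ is surjective, it follows that $\prom^{n}(V)\cap W\neq\emptyset$ for all $n\ge m_1$, i.e.\ $\prom$ is topologically mixing. The paper's route gives something different: an explicit point with a dense orbit.

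For dense periodic points, both proofs use the same idea of smashing infinitely many copies of one finite tableau. Your formulation through $T[1,m]$ and Remark~\ref{rem: weak extension} is the careful one. The paper smashes copies of $T|_{(m^l)}$, whose entry set is in general not $[1,ml]$, so that smash is not a bijective $\N$-filling as written. The paper's transitivity step tacitly needs the same repair: pass to $T[1,M]$ for $M$ large.

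One small correction concerns $l=1$. The one-point system is not sensitive, because the only point near $x$ is $x$ itself. So condition (3) fails there rather than holding trivially. Relatedly, the theorem of \cite{MR1157223} requires $X$ to be infinite. For $l\ge 2$ you should state this explicitly, citing Remark~\ref{rem: uncountable SYT}.
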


We prove Theorem \ref{thm: jdt is chaotic} by means of the following three lemmata.

\begin{lemma}\label{lem: restrictions are close}
    Let $l \in \N$, $\mu = (\infty^l)$, and $T,U \in \SYT(\mu)$.
    If $T|_{(m^l)} = U|_{(m^l)}$ for some $m \in \N$, then $\operatorname{d}(T,U) < 2^{-m}$.
\end{lemma}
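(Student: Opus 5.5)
Since $T$ and $U$ agree on every box of the rectangle $(m^l)$, the only boxes that can contribute to $\operatorname{d}(T,U)$ are those $(i,j)$ with $1\le i\le l$ and $j\ge m+1$. The plan is to bound the defining sum by the sum over all such boxes and then evaluate the resulting geometric series.

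Concretely, we would write
\[
\operatorname{d}(T,U) \le \sum_{i=1}^{l}\sum_{j=m+1}^{\infty} 2^{-(i+j)} = \Bigl(\sum_{i=1}^{l}2^{-i}\Bigr)\Bigl(\sum_{j=m+1}^{\infty}2^{-j}\Bigr) = \bigl(1-2^{-l}\bigr)\,2^{-m} .
\]
Since $l\ge 1$, we have $1-2^{-l}<1$, and the right-hand side is therefore strictly less than $2^{-m}$. This gives the claimed strict inequality.

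The only point that needs care is the strictness, and it comes for free from the factor $1-2^{-l}<1$. There is no need to argue that some box in the region actually has equal entries. There is no real obstacle here: the statement is a direct geometric-series estimate.
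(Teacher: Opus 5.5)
Your proposal is correct and is essentially the paper's own proof: bound the sum by the sum over all boxes in $[1,l]\times[m+1,\infty)$, factor it as $\bigl(\sum_{i=1}^{l}2^{-i}\bigr)\bigl(\sum_{j=m+1}^{\infty}2^{-j}\bigr)=(1-2^{-l})\,2^{-m}$, and get strictness from $1-2^{-l}<1$.
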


    \begin{proof} We have
        \[
        d(T,U) = \sum_{\{(i,j) \,\mid\, T_{i,j} \neq U_{i,j}\}}2^{-(i+j)} \leq \sum_{(i,j) \in [1,l] \times[m + 1,\infty)}2^{-(i+j)}
        \]
        \[
        = \left(\sum_{i = 1}^l 2^{-i}\right)\left(\sum_{j=m+1}^\infty 2^{-j}\right) = \left(1 - 2^{-l}\right)2^{-m} < 2^{-m}.\qedhere
        \]
    \end{proof}

\begin{lemma}
    Let $l \in \N$ and $\mu = (\infty^l)$.
    There exists $T \in \SYT(\mu)$ such that for every $n \in \N$, $\lambda \vdash n$ with $\lambda'_1 \leq l$, and $S \in \SYT(\lambda)$ there is some $k \geq 0$ such that $S = \jdt^k(T)|_\lambda$.
    In particular, $\jdt : \SYT(\mu) \rightarrow \SYT(\mu)$ is transitive.
\end{lemma}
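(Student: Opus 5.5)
The plan is to build $T$ as one long left smash that lists every finite standard Young tableau with at most $l$ rows, and then to peel these off one at a time with Lemma~\ref{lem: Non-rectangluar Smashing}. The basic observation is this: for a finite $S\in\SYT(\lambda)$ with $\lambda'_1\le l$ and any $T'\in\SYT(\mu)$, applying Lemma~\ref{lem: Non-rectangluar Smashing} repeatedly gives
\[ \jdt^{r}(S\lsmash T')=\jdt^{r}(S)\lsmash T' \quad\text{for } 0\le r\le |S|. \]
The intermediate $\jdt^r(S)$ are again finite standard Young tableaux with at most $l$ rows, so the lemma applies at each step. Taking $r=|S|$ and using $\jdt^{|S|}(S)=\varnothing$ together with the convention $\varnothing\lsmash T'=T'$ gives $\jdt^{|S|}(S\lsmash T')=T'$.

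\textbf{Construction of $T$.} The set of all finite standard Young tableaux whose shapes have at most $l$ rows is countable, so enumerate it as $S(1),S(2),\dots$. Infinitely many of these have exactly $l$ rows, so the hypothesis of Definition~\ref{def: smash infty^l} holds. Define
\[ T=\biglsmash_{k=1}^\infty S(k), \qquad T_k:=\biglsmash_{i=k}^\infty S(i). \]
From the definition of the left smash one checks the associativity $T_k=S(k)\lsmash T_{k+1}$: the boxes of $S(k)$ are placed first and left-justified, and the rest of the smash is exactly $T_{k+1}$ shifted by $|S(k)|$ and placed to their right. Induction on $k$ then gives
\[ \jdt^{|S(1)|+\cdots+|S(k-1)|}(T)=T_k. \]
Since $S(k)$ is the first tableau inserted into $T_k$, we have $T_k|_{\lambda(k)}=S(k)$. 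Every $S$ in the statement equals some $S(k)$, which proves the first claim.

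\textbf{Transitivity.} Let $V,W$ be nonempty open sets. Choose $U\in V$, $U'\in W$ and $m$ such that the $2^{-m}$-balls around $U$ and $U'$ lie in $V$ and $W$ respectively. Put $R=U|_{(m^l)}$ and $R'=U'|_{(m^l)}$; both are finite standard Young tableaux of shape $(m^l)$. By the first part there is a $k$ with $\jdt^k(T)|_{(m^l)}=R'$. Now consider $X=R\lsmash T$. Its restriction to $(m^l)$ is $R$, because $R$ fills the rectangle first. Lemma~\ref{lem: restrictions are close} then gives $\operatorname{d}(X,U)<2^{-m}$, so $X\in V$. By the observation above,
\[ \jdt^{|R|+k}(X)=\jdt^{k}(T), \]
which agrees with $U'$ on $(m^l)$ and hence lies in $W$, again by Lemma~\ref{lem: restrictions are close}.

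The main point needing care is the bookkeeping in the associativity identity $T_k=S(k)\lsmash T_{k+1}$, in particular the entry shifts. The rest is a direct iteration of Lemma~\ref{lem: Non-rectangluar Smashing}, together with the remark that $\jdt$ of a finite tableau with at most $l$ rows is again such a tableau, or is empty.
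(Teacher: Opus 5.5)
Your proof is correct. It uses the same core mechanism as the paper: build one universal left smash and peel it off by iterating Lemma~\ref{lem: Non-rectangluar Smashing}. It differs from the paper's proof in two places.

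\textbf{The tableaux being smashed.} The paper smashes only the rectangular tableaux in $\SYT((m^l))$, grouped by increasing $m$. It then needs the fact that every $S\in\SYT(\lambda)$ extends to some tableau of shape $(\lambda_1^l)$. You smash every finite tableau with at most $l$ rows, which removes that extension step. Your identity $T_k=S(k)\lsmash T_{k+1}$ also makes the number of $\jdt$ steps explicit, in place of the paper's closed formula for $k$.

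\textbf{The transitivity argument.} The paper takes both witnesses on the orbit of $T$, namely $\jdt^{k_1}(T)\in V_1$ and $\jdt^{k_2}(T)\in V_2$. This forces it to arrange $k_1<k_2$, which it does by taking $m_1<m_2$ and using that narrower rectangles are smashed first. You instead take the prefixed point $X=R\lsmash T\in V$, whose orbit passes through $T$ after $|R|$ steps. So no ordering of return times is needed, and your argument works for an arbitrary enumeration.

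\textbf{What your route costs.} You need that left smashes of tableaux of different heights are still standard. This is true: cumulative row lengths are weakly decreasing down the rows, so each column entry either comes from an earlier smashed tableau or is dominated within a single tableau. The paper records this after Definition~\ref{def: smash infty^l}. You also need the convention $\varnothing\lsmash T'=T'$ at the last peeling step, which the paper's own iteration of the lemma uses as well.
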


    \begin{proof}
        For each $m \in \N$, we may choose an enumeration $\{S(i,m) \mid 1 \leq i \leq N_m\}$ of $\SYT((m^l))$.
        Then
        \[T \coloneqq \biglsmash_{m=1}^\infty \left( \biglsmash_{i=1}^{N_m} S(i,m) \right) \in \SYT(\mu).\]
        If $n \in \N$, $\lambda \vdash n$ with $\lambda_1' \leq l$, and $S \in \SYT(\lambda)$, then there is some $1 \leq i \leq N_{\lambda_1}$ such that $S(i,\lambda_1)|_{\lambda} = S$.
        Thus, by Lemma \ref{lem: Non-rectangluar Smashing}, $\jdt^k(T)|_\lambda = S(i,\lambda_1)|_\lambda = S$ for $k = l\left(m(i-1)+\sum_{j=1}^{m-1}jN_j\right)$.

        Now, suppose $V_1,V_2$ are non-empty open subsets of $\SYT(\mu)$.
        For $i = 1,2$, choose $T_i \in V_i$, $\epsilon_i > 0$, and $m_i \in \N$ such that $\{U \in \SYT(\mu) \mid \operatorname{d}(T_i,U) < \epsilon_i\} \subseteq V_i$ and $2^{-m_i} < \epsilon_i$.
        Without loss of generality, choose $m_1 < m_2$.
        By the above, there exist $k_1,k_2 \geq 0$ such that $\jdt^{k_i}(T)|_{(m_i^l)} = T_i|_{(m_i^l)}$ for $i = 1,2$ and $k_1 < k_2$.
        Writing $U = \jdt^{k_1}(T)$, we see that $\operatorname{d}(T_1,U) < 2^{-m_1}< \epsilon_1$ and $\operatorname{d}(T_2,\jdt^{k_2 - k_1}(U)) < 2^{-m_2} < \epsilon_2$ by Lemma \ref{lem: restrictions are close}.
        Hence $U\in V_1$ and $\jdt^{k_2 - k_1}(U) \in \jdt^{k_2 - k_1}(V_1) \cap V_2$.
    \end{proof}



\begin{lemma}\label{lem: jdt has dense periodic pts}
    Let $l \in \N$ and $\mu = (\infty^l)$.
    Then $\jdt : \SYT(\mu) \rightarrow \SYT(\mu)$ has dense periodic points.
\end{lemma}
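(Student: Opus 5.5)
To show density, we fix $T \in \SYT(\mu)$ and $\epsilon > 0$, and we produce a periodic $U \in \SYT(\mu)$ with $\operatorname{d}(T,U) < \epsilon$. By Lemma \ref{lem: restrictions are close}, it suffices to choose $m \in \N$ with $2^{-m} < \epsilon$ and find a periodic $U$ with $U|_{(m^l)} = T|_{(m^l)}$. This reduces the problem to periodically extending a finite rectangular restriction.

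Set $R := T|_{(m^l)}$. This is a standard Young tableau, not merely one with entries from $\N$. Indeed, an infinite standard Young tableau restricted to the rectangle $(m^l)$ need not have entries $[1,ml]$, because small entries could sit further right. We therefore replace $R$ by the restriction of $T$ to the Young diagram $\lambda$ consisting of all boxes with entries in $[1,M]$, where $M := \max\{T_{i,j} \mid (i,j) \in (m^l)\}$. Because rows and columns of $T$ increase, the set of boxes with entries at most $M$ is an order ideal, so $\lambda$ is a partition. It contains $(m^l)$, satisfies $\lambda_1' = l$ (since $l \le \lambda_1'$ and the shape lies in $\mu$), and $T|_\lambda \in \SYT(\lambda)$. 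So $R := T|_\lambda$ is a genuine standard Young tableau of size $M$, and it agrees with $T$ on $(m^l)$.

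We now apply Remark \ref{rem: weak extension}, or equivalently Lemma \ref{lem: Non-rectangluar Smashing}. Since $\lambda_1' = l$, no further extension is needed, and we take
\[ U := \biglsmash_{i=1}^\infty R. \]
This is in $\SYT(\mu)$ because infinitely many smashed pieces have full height $l$. By Lemma \ref{lem: Non-rectangluar Smashing}, applied $M$ times, $\jdt^M(U) = \jdt^M(R) \lsmash \biglsmash_{i=2}^\infty R = U$, since $\jdt^M(R) = \varnothing$. Hence $U$ is $M$-periodic. Alternatively, we may cite Corollary \ref{cor: fine extension} directly with $n = M \ge l$. Finally, $U|_\lambda = R$ agrees with $T$ on $(m^l)$, so $\operatorname{d}(T,U) < 2^{-m} < \epsilon$.

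The only subtle step is the one above: making sure the restricted finite piece is a standard Young tableau with alphabet $[1,|\lambda|]$, so that the smash and periodicity results apply. Passing to the order ideal of the entries at most $M$ handles this. The rest is a direct application of earlier results.
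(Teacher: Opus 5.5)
Your proof is correct and follows the paper's strategy: smash infinitely many copies of a finite piece of $T$, get periodicity from Lemma~\ref{lem: Non-rectangluar Smashing} (using $\jdt^{|R|}(R)=\varnothing$), and get closeness from Lemma~\ref{lem: restrictions are close}.

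The only difference is which finite piece you smash, and there your version is the more careful one. The paper smashes $T|_{(m^l)}$ itself and asserts $ml$-periodicity. That tacitly requires the entries of $T|_{(m^l)}$ to be exactly $[1,ml]$, which fails in general. For the diagonal tableau $T(2)$ of Remark~\ref{rmk: Upper Bound is Sharp} one has $T_{1,m+1}=2m<2m+1=T_{2,m}$, so it fails for every $m$. Your passage to the order ideal $\lambda=\{(i,j)\mid T_{i,j}\le T_{l,m}\}$ is exactly the repair needed. This $\lambda$ contains $(m^l)$, has $\lambda_1'=l$, and carries a genuine element of $\SYT(\lambda)$. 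The only cost is the harmless one of period $T_{l,m}$ instead of $ml$.

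One wording slip: right after setting $R:=T|_{(m^l)}$ you write ``This is a standard Young tableau.'' You mean that you \emph{need} one, since you go on to explain that $T|_{(m^l)}$ generally is not.
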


    \begin{proof}
        Let $T \in SYT(\mu)$ and $\epsilon > 0$.
        Choose $m \in \N$ large enough so that $2^{-m} < \epsilon$, and let $U = \biglsmash_{i=1}^\infty T|_{(m^l)}$.
        Observe that $U \in \SYT(\mu)$ is $ml$-periodic by applying Lemma \ref{lem: Non-rectangluar Smashing} $ml$ times, while $\operatorname{d}(T,U) < 2^{-m} < \epsilon$ by Lemma \ref{lem: restrictions are close}.
    \end{proof}

\section{Recurrence Behavior of $\prom$} \label{sec: recurrence}


As we have seen in Corollary \ref{Cor: doubly infinite is never periodic}, $\SYT((\infty^\infty))$ has no periodic tableaux. However, it does exhibit recurrent behavior which we now define.

\begin{definition} \label{Def: Recurrence}
    Let $\mu$ be a infinite shape and $T \in \SYT(\mu)$.
    We say that $T$ is \emph{recurrent} if for every $m,n \in \N$, there exists $k \in\N$ with $\prom^k(T)|_{(m^n)} = T|_{(m^n)}$.
\end{definition}

Note that it is straightforward to see from the definition of the metric in Theorem \ref{thm: Continuity of prom} (see also Lemma \ref{lem: restrictions are close}) that Definition \ref{Def: Recurrence} agrees with what is commonly called a \emph{recurrent point} or \emph{topological recurrence}. As the space $\SYT(\mu)$ is Hausdorff, in fact there are an infinite number of $k\in\N$ with $\prom^k(T)|_{(m^n)} = T|_{(m^n)}$. This form of recurrence is weaker than uniform recurrence (or almost periodic) in the sense that the set of return times may not be syndetic, i.e., the gap sizes between instances of recurrence may not be bounded.

The next definition is used to produce a recurrent element of $\SYT((\infty^\infty))$.

\begin{definition} \label{def: self replicating recurrent}
    Let $\mu=(\infty^\infty)$ and 
    \[ S = \vcenter{\hbox{\ytableaushort{12,34}}}\, .\]
    For $k\in\N$, recursively construct $S_k\in\SYT\big(((2^k)^{2^k})\big)$ as follows: begin with $S_1:=S$. If $S_k$ is constructed, define $S_{k+1}$ by
    \begin{enumerate}
    \item placing $S_k$ in the upper left quadrant of $((2^{k+1})^{2^{k+1}})$,
    \item placing $S_k+(2^{k})^2$ in the upper right quadrant of $((2^{k+1})^{2^{k+1}})$,
    \item placing $S_k+2(2^{k})^2$ in the lower left quadrant of $((2^{k+1})^{2^{k+1}})$, and
    \item placing $S_k+3(2^{k})^2$ in the lower right quadrant of $((2^{k+1})^{2^{k+1}})$.
    \end{enumerate}
    For example, $S_2$ is
    \[
        S_2 = 
        \vcenter{
          \hbox{
            \begin{ytableau}
              1 & 2 & *(lightgray) 5 & *(lightgray) 6 \\
              3 & 4 & *(lightgray) 7 & *(lightgray) 8 \\
              *(lightgray) 9 & *(lightgray) 10 & 13 & 14\\
              *(lightgray) 11 & *(lightgray) 12 & 15 & 16
            \end{ytableau}
          }
        }.
    \]
    Define $T\in\SYT(\mu)$ so that $T|_{((2^k)^{2^k})}=S_k$ for all $k\in\N$.
\end{definition}

The next result, Theorem \ref{thm: recurrent in infty^infty existence}, gives the existence of a recurrent element in $\SYT((\infty^\infty))$. It is straightforward to generalize this result to allow any starting rectangular tableau, $S$, by adjusting Definition \ref{def: self replicating recurrent} accordingly.

\begin{theorem} \label{thm: recurrent in infty^infty existence}
    Let $\mu=(\infty^\infty)$ and $T\in\SYT(\mu)$ constructed as in Definition \ref{def: self replicating recurrent}. Then $T$ is recurrent.
\end{theorem}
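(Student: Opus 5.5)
The plan is to take a return time that is a power of four: $k = 4^K$ with $2^K \ge \max(m,n)$. We then show that $\prom^{4^K}(T)$ coincides with $T$ on the whole square $((2^K)^{2^K})$, which contains $(m^n)$. No finer analysis of sliding paths is needed; the argument rests on the self-similarity of $T$ together with the rectification principle of Theorem~\ref{thm: Uniqueness of Rectified Skew Tableau}, used the same way as in the proof of Lemma~\ref{lem: One Lemma to Rule Them All}.

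\textbf{Step 1 (what $\prom^N$ does to an interval of entries).} For any infinite shape $\mu$, any $U\in\SYT(\mu)$ and any $N,M\in\N$, we show that $\prom_U^N(U[N+1,N+M])$ is the decremented rectification of the skew tableau $U[N+1,N+M]$, and that it equals $\prom^N(U)[1,M]$. The reason is that during a single $\prom$ the relative movement of the boxes with entries below a threshold depends only on those boxes. Whenever the empty space has a neighbour with a small entry, the smaller entry is chosen, so boxes with larger entries never block or reorder the small ones. Consequently the restriction of each step to the boxes with entries $\le N+M$ is a jeu de taquin slide into an inner corner of the smaller skew tableau. After $N$ steps all entries $\le N$ are removed, so the shape is straight, and the result is a rectification. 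Its independence of the choice of slides is Theorem~\ref{thm: Uniqueness of Rectified Skew Tableau}. The paper already uses this fact tacitly in Lemma~\ref{lem: One Lemma to Rule Them All}; here I would write it out carefully.

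\textbf{Step 2 (apply it with $N=M=4^K$).} By Definition~\ref{def: self replicating recurrent}, $T|_{((2^{K+1})^{2^{K+1}})}=S_{K+1}$. Hence $T[4^K+1,2\cdot 4^K]$ is exactly the upper right quadrant $S_K+4^K$, placed in rows $1,\dots,2^K$ and columns $2^K+1,\dots,2^{K+1}$. This is a translated straight-shape rectangle, so rectifying it simply slides it left into the square $((2^K)^{2^K})$ without changing any entries; this is easy to check directly or via Knuth equivalence of reading words. Decrementing by $4^K$ gives $S_K$. By Step 1, $\prom^{4^K}(T)[1,4^K]=S_K$, and it sits on the square $((2^K)^{2^K})$. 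Since $|S_K| = 4^K$ equals the size of that square, this means $\prom^{4^K}(T)|_{((2^K)^{2^K})}=S_K=T|_{((2^K)^{2^K})}$.

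\textbf{Step 3 (conclude).} Given $m,n$, choose $K$ with $2^K\ge\max(m,n)$ and restrict the equality of Step 2 to $(m^n)$. This gives $\prom^{4^K}(T)|_{(m^n)}=T|_{(m^n)}$, which is the definition of recurrence in Definition~\ref{Def: Recurrence}.

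The main obstacle is making Step 1 rigorous for the infinite shape $(\infty^\infty)$. The point to check is that a slide of the full infinite tableau, restricted to the boxes with entries $\le N+M$, is genuinely a jeu de taquin slide of that finite skew tableau. The case to handle is when the empty space passes out of the region occupied by small entries: after that moment no small box moves again. With this observation the restriction argument becomes a routine induction on the number of slides.
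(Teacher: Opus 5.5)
Your proof is correct, and its skeleton is the paper's. You use the same return time $4^K$, the same key identity $\prom^{4^K}(T)|_{((2^K)^{2^K})}=S_K=T|_{((2^K)^{2^K})}$ read off from the upper-right quadrant $S_K+4^K$ of $S_{K+1}$, and the same final choice $2^K\ge\max\{m,n\}$.

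The difference lies only in how that identity is justified. The paper simply asserts that the incrementing forces the copy $S_K+4^K$ to slide horizontally into the upper-left quadrant under $\prom^{4^K}$. You instead isolate a general fact: $\prom^N$ acts on the entries $[N+1,N+M]$ as decremented rectification, which is the fact used tacitly in Lemma~\ref{lem: One Lemma to Rule Them All}. You then observe, via Theorem~\ref{thm: Uniqueness of Rectified Skew Tableau}, that a translated rectangle rectifies to itself.

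Your route determines only the end state, not the trajectory, but that is all recurrence needs. It also recovers the paper's picture: boxes only move up or left under $\prom$, and each box of $S_K+4^K$ ends in the row it started in, so every move was horizontal.

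When you write out Step~1, treat the lower threshold as explicitly as the upper one. The portion of each sliding path inside the region of entries $\le N-t$ ends at a corner of that region. That corner is exactly the inner corner into which the block of larger entries slides.
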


\begin{proof}
    Recall the definition of $S_k$, $k\in\N$, from Definition \ref{def: self replicating recurrent}.
    We claim that
    \[ \prom^{(2^{k})^2}(T)|_{((2^{k})^{2^{k}})} = T|_{((2^{k})^{2^{k}})}. \]
    To see this, simply observe that the incrementing in Definition \ref{def: self replicating recurrent} forces the incremented copy of $S_k$ in the upper right quadrant of $S_{k+1}$ to slide horizontally into the upper left quadrant under $\prom^{(2^{k})^2}$.

    Now let $m,n\in\N$. Finish the proof by choosing any $k\in\N$ so that $2^k \geq \max\{m,n\}$.
\end{proof}

Note that by taking a large enough rectangle in the proof of Theorem \ref{thm: recurrent in infty^infty existence} and observing that the result generalizes to any rectangular starting tableau, $S$, it is straightforward to see that there are recurrent extensions of any standard Young tableau.

The recurrence behavior in Theorem \ref{thm: recurrent in infty^infty existence} is quite beautiful, see Figure \ref{fig: recurrence figure 8 no. 1}. The pattern displayed shows that $\prom^k(T)|_{(4^4)}$ first cycles through $16$ distinct tableaux before returning to $T|_{(4^4)}$ when $k=16$. After that, it loops through $8$ additional distinct tableaux twice, returning to $T|_{(4^4)}$ for $k=24, 32$. This pattern then repeats for $k \mod 32$.
    \begin{figure}[H]
        \begin{center}
            \includegraphics[width=0.95\textwidth]{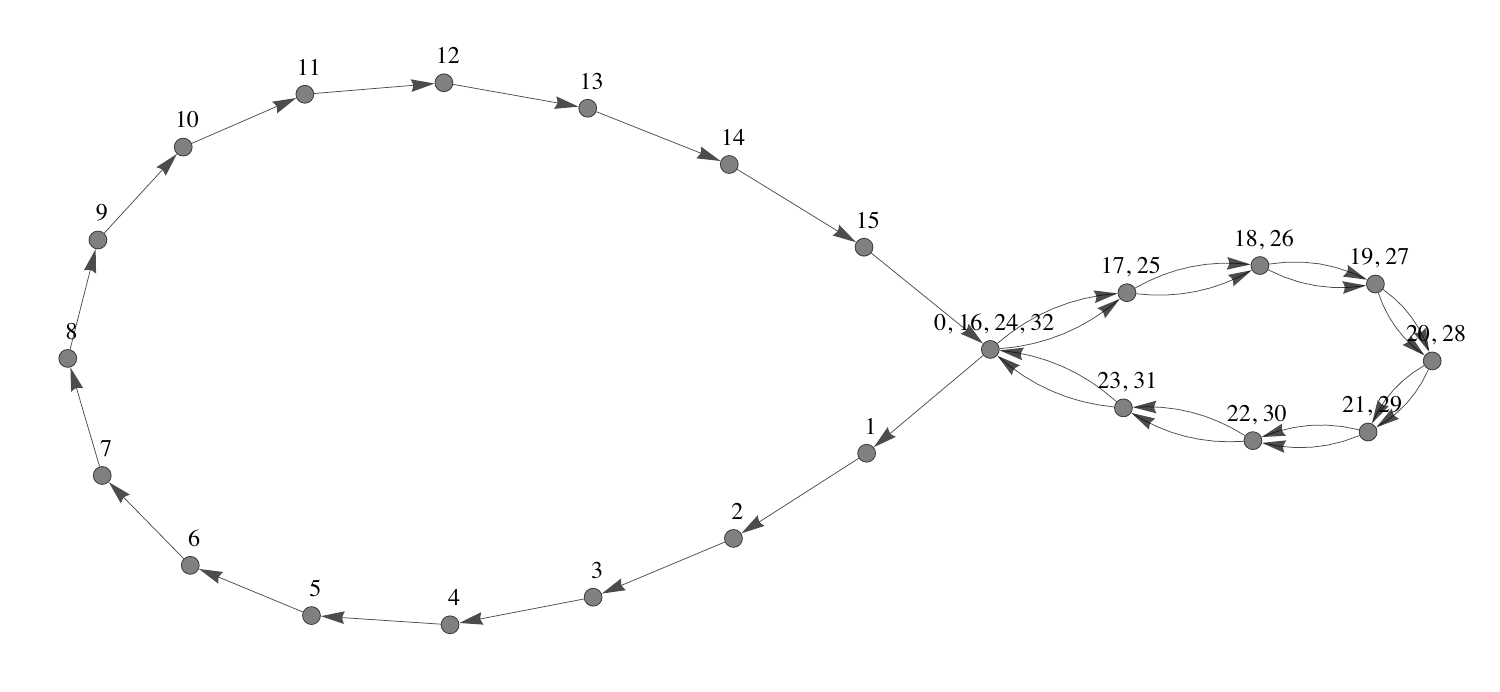}
        \end{center}
        \caption{$\prom^k(T)|_{(4^4)}$ for $k\in[0,32]$ from Theorem \ref{thm: recurrent in infty^infty existence}.}
        \label{fig: recurrence figure 8 no. 1}
    \end{figure}
More complicated patterns exist. For example, Figure \ref{fig: recurrence figure 8 no. 2} shows the recurrence pattern for the restriction of $\prom^k(T)$ to the shape $(2^3)$.
\begin{figure}[H]
        \begin{center}
            \includegraphics[width=0.85\textwidth]{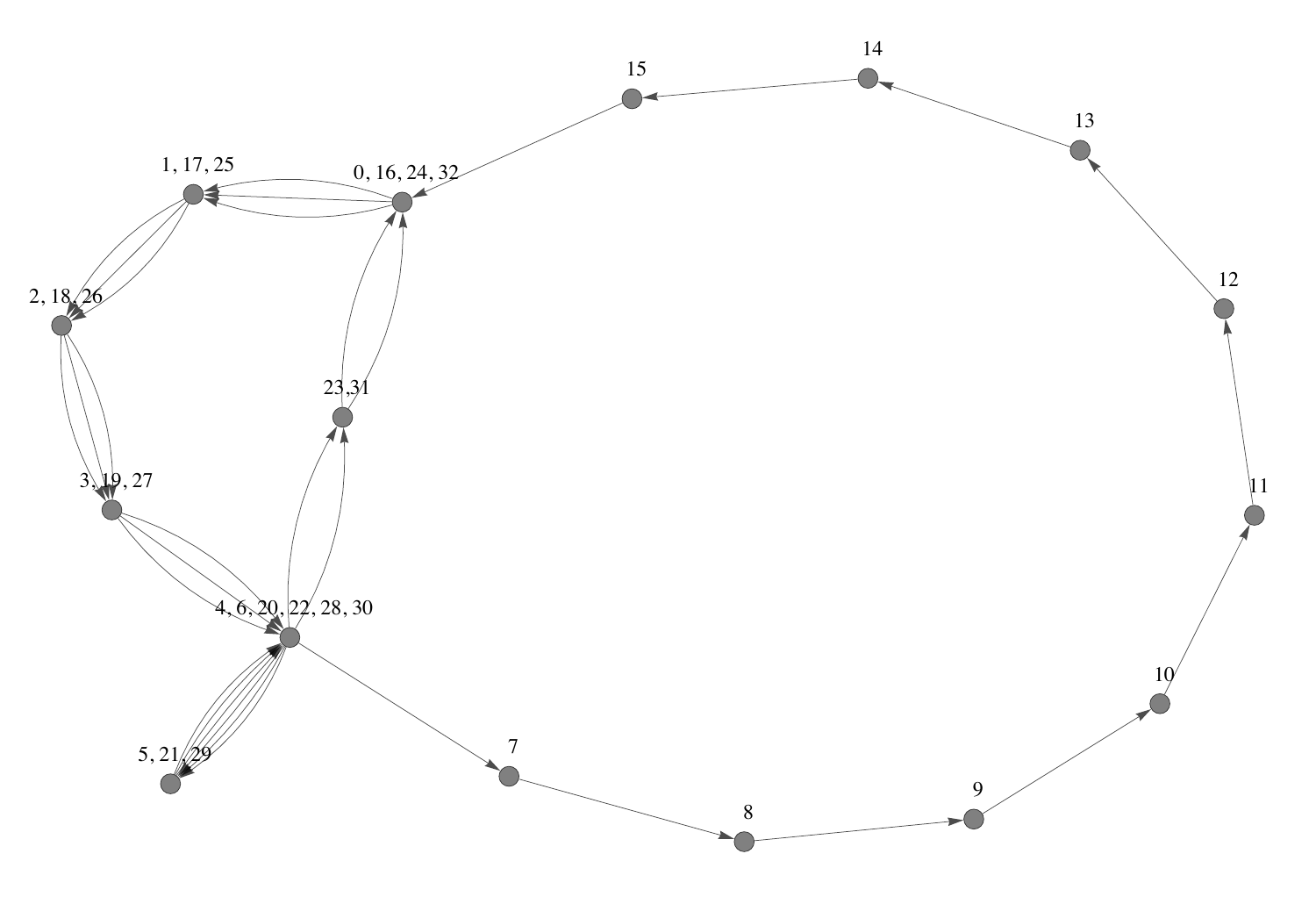}
        \end{center}
        \caption{$\prom^k(T)|_{(2^3)}$ for $k\in[0,32]$ from Theorem \ref{thm: recurrent in infty^infty existence}.}
        \label{fig: recurrence figure 8 no. 2}
    \end{figure}

\section{Open Questions and Concluding Remarks} \label{sec: concluding remarks}

Theorem \ref{thm: recurrent in infty^infty existence} provides an example of a recurrent $T$ for the shape $\mu=(\infty^\infty)$. It is not known if there exist $T$ with uniform recurrence, i.e., ones with syndetic return times.

It would be interesting to study the failure of Lemma \ref{lem: evenutal horizontal} for the shape $\mu = (\infty^\infty)$. In particular, does every tableau $T\in\SYT(\mu)$ have some $k\in\N$ such that the sliding path of $\prom^k(T)$ never becomes horizontal? 

Additionally, the failure of Theorem \ref{Thm: Size of Preimage} for the shape $\mu = (\infty^\infty)$ warrants further investigation, especially with respect to studying the possible range and frequencies of cardinalities of $\prom^{-1}(T)$. We note that there exist $T$ with $\prom^{-1}(T)$ uncountable. For example, it is possible to fill the infinite shape in Figure \ref{fig: jm} using the antidiagonals and paying attention to dominated entries (cf. Definition \ref{def: Dominated Entry}) to admit all the displayed possible sliding paths.
\begin{figure}[H]
        \begin{center}
        \includegraphics[width=0.5\textwidth]{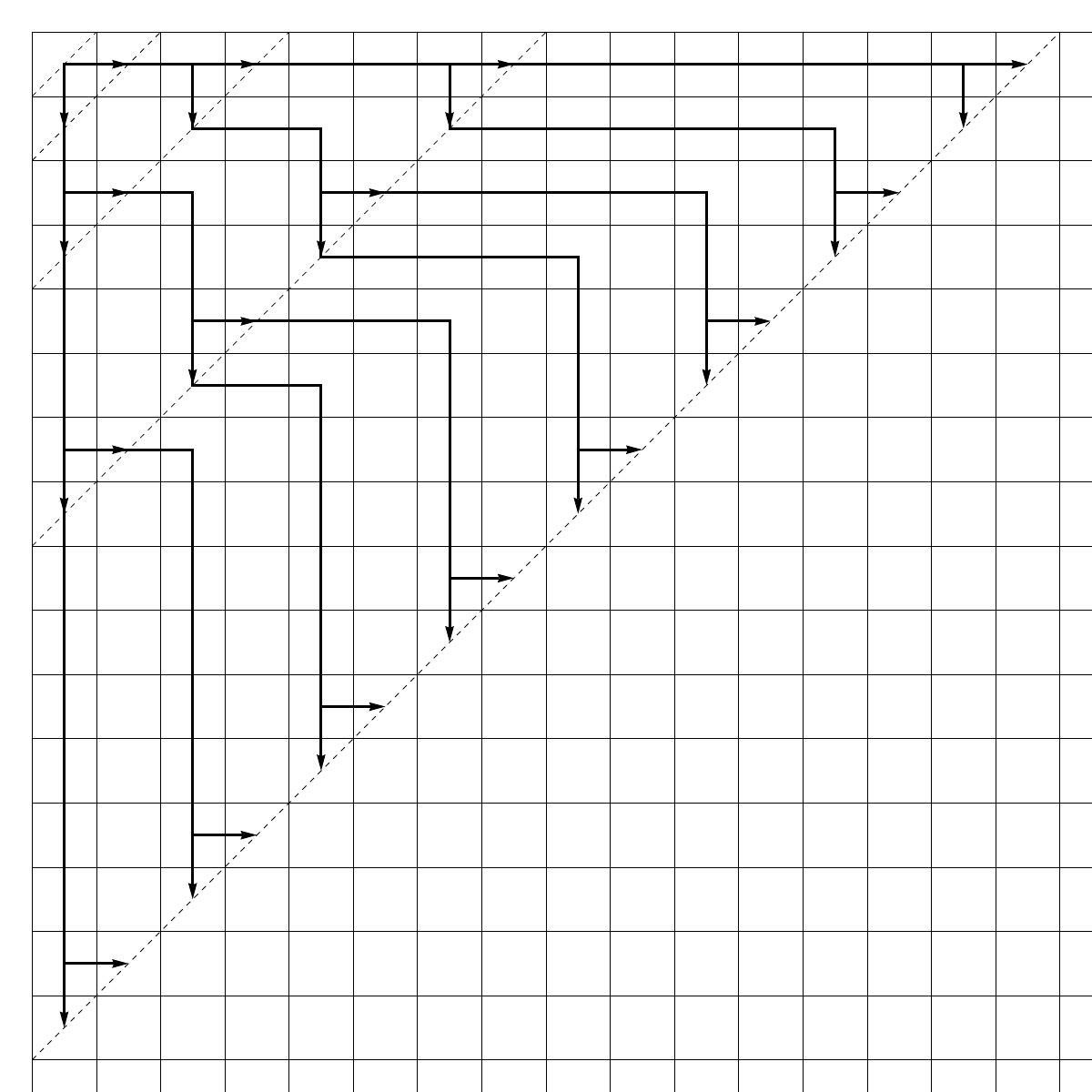}
        \end{center}
        \caption{Sliding paths for uncountable $\prom^{-1}(T)$.}
        \label{fig: jm}
\end{figure}

Theorem \ref{thm: Periodic gives linear growth} established that periodic tableaux with infinite shape exhibit linear growth in their entries. This naturally raises the question: do certain forms of recurrence (cf. Section~\ref{sec: recurrence}) impose growth conditions for the shape $\mu$?

More broadly, it would be valuable to develop a structure theorem for specific classes of recurrent tableaux, analogous to the classifications provided in Theorems \ref{thm: backfill and alt periodic classification} and \ref{Thm: Characterization of Pre-periodic}. Such a framework could deepen our understanding of the combinatorial and algebraic properties underlying these objects.

Finally, it would be fascinating to characterize the types of recurrence behavior illustrated in Figures \ref{fig: recurrence figure 8 no. 1} and \ref{fig: recurrence figure 8 no. 2}, and to construct tableaux that realize prescribed recurrence patterns. This leads to intriguing questions about which graphs can arise from such constructions, and how the tableau dynamics reflect the underlying graph structure.

\bibliographystyle{abbrvnat}
\bibliography{refs}

@article{Schutzenberger-Promotion-Original,
  title   = {Promotion des morphismes d'ensembles ordonn{\'e}s},
  journal = {Discrete Math.},
  volume  = {2},
  pages   = {73--94},
  year    = {1972},
  author  = {Sch{\"u}tzenberger, M. P.}
}

@inproceedings{Schutzenberger1977,
  author    = {Sch{\"u}tzenberger, M. P.},
  title     = {La correspondance de {R}obinson},
  booktitle = {Combinatoire et Repr{\'e}sentation du Groupe Sym{\'e}trique},
  series    = {Lecture Notes in Mathematics},
  volume    = {579},
  publisher = {Springer},
  pages     = {59--113},
  year      = {1977}
}

@book{fulton1997young,
  title     = {Young {T}ableaux: {W}ith {A}pplications to {R}epresentation {T}heory and {G}eometry},
  author    = {Fulton, William},
  series    = {London Mathematical Society Student Texts},
  volume    = {35},
  year      = {1997},
  publisher = {Cambridge University Press}
}

@article{RomikSniady2015,
  AUTHOR = {Romik, Dan and {\'S}niady, Piotr},
  TITLE = {Jeu de taquin dynamics on infinite {Y}oung tableaux and second class particles},
  JOURNAL = {Ann. Probab.},
  FJOURNAL = {The Annals of Probability},
  VOLUME = {43},
  YEAR = {2015},
  PAGES = {682--737},
  MRCLASS = {60C05 (05E10 37A05 60K35 82C22)},
  MRNUMBER = {3306003},
  MRREVIEWER = {Boyka L. Aneva}
}

@article{vervshik1977asymptotic,
  title   = {Asymptotic behavior of the {P}lancherel measure of the symmetric group and the limit form of {Y}oung tableaux},
  author  = {Ver{\v{s}}hik, Anatolii Moiseevich and Kerov, Sergei V.},
  journal = {Dokl. Akad. Nauk SSSR},
  volume  = {233},
  pages   = {1024--1027},
  year    = {1977}
}

@article{vershik1981asymptotic,
  title   = {Asymptotic theory of characters of the symmetric group},
  author  = {Vershik, Anatolii Moiseevich and Kerov, Sergei Vasil'evich},
  journal = {Funct. Anal. Appl.},
  volume  = {15},
  pages   = {246--255},
  year    = {1981}
}

@article{kerov1986characters,
  title   = {The characters of the infinite symmetric group and probability properties of the {R}obinson--{S}chensted--{K}nuth algorithm},
  author  = {Kerov, Sergei V. and Vershik, Anatolii M.},
  journal = {SIAM J. Algebraic Discrete Methods},
  volume  = {7},
  pages   = {116--124},
  year    = {1986}
}

@article{MR3200334,
  AUTHOR = {Clifford, Edward and Thomas, Hugh and Yong, Alexander},
  TITLE = {{$K$}-theoretic {S}chubert calculus for {${\rm OG}(n,2n+1)$} and jeu de taquin for shifted increasing tableaux},
  JOURNAL = {J. Reine Angew. Math.},
  FJOURNAL = {Journal f\"{u}r die Reine und Angewandte Mathematik. [Crelle's Journal]},
  VOLUME = {690},
  YEAR = {2014},
  PAGES = {51--63},
  MRCLASS = {05E10 (14N15)},
  MRNUMBER = {3200334},
  MRREVIEWER = {Li Li}
}

@article{MR2491941,
  AUTHOR = {Thomas, Hugh and Yong, Alexander},
  TITLE = {A jeu de taquin theory for increasing tableaux, with applications to {$K$}-theoretic {S}chubert calculus},
  JOURNAL = {Algebra Number Theory},
  FJOURNAL = {Algebra \& Number Theory},
  VOLUME = {3},
  YEAR = {2009},
  PAGES = {121--148},
  MRCLASS = {05E10 (14M15)},
  MRNUMBER = {2491941},
  MRREVIEWER = {Gregory S. Warrington}
}

@article{MR2806593,
  AUTHOR = {Thomas, Hugh and Yong, Alexander},
  TITLE = {The direct sum map on {G}rassmannians and jeu de taquin for increasing tableaux},
  JOURNAL = {Int. Math. Res. Not.},
  FJOURNAL = {International Mathematics Research Notices. IMRN},
  YEAR = {2011},
  VOLUME = {2011},
  PAGES = {2766--2793},
  MRCLASS = {14M15 (05E10 14C35)},
  MRNUMBER = {2806593},
  MRREVIEWER = {Li Li}
}

@article{MR1466956,
  AUTHOR = {Sheats, Jeffrey T.},
  TITLE = {A symplectic jeu de taquin bijection between the tableaux of {K}ing and of {D}e {C}oncini},
  JOURNAL = {Trans. Amer. Math. Soc.},
  FJOURNAL = {Transactions of the American Mathematical Society},
  VOLUME = {351},
  YEAR = {1999},
  PAGES = {3569--3607},
  MRCLASS = {05E15},
  MRNUMBER = {1466956},
  MRREVIEWER = {Ang{\`e}le M. Hamel}
}

@article{iwao2019jeu,
  title   = {Jeu de taquin, uniqueness of rectification and ultradiscrete {KP}},
  author  = {Iwao, Shinsuke},
  journal = {J. Integrable Syst.},
  volume  = {4},
  pages   = {xyz012, 25 pp.},
  year    = {2019},
}

@article{arnal2018sliding,
  title   = {Sliding Presentation of the Jeux de Taquin for Classical {L}ie Groups},
  author  = {Arnal, Didier and Khlifi, Olfa},
  journal = {Algebr. Represent. Theory},
  volume  = {21},
  pages   = {219--237},
  year    = {2018}
}

@article{van1998analogue,
  title   = {An analogue of Jeu de taquin for {L}ittelmann's crystal paths},
  author  = {van Leeuwen, Marc A. A.},
  journal = {S\'emin. Lothar. Comb.},
  volume  = {41},
  pages   = {B41b, 23 pp.},
  year    = {1998},
}

@article{rhoades2010cyclic,
  title   = {Cyclic sieving, promotion, and representation theory},
  author  = {Rhoades, Brendon},
  journal = {J. Comb. Theory, Ser. A},
  volume  = {117},
  pages   = {38--76},
  year    = {2010}
}

@article{petersen2009promotion,
  title   = {Promotion and cyclic sieving via webs},
  author  = {Petersen, T. Kyle and Pylyavskyy, Pavlo and Rhoades, Brendon},
  journal = {J. Algebr. Comb.},
  volume  = {30},
  pages   = {19--41},
  year    = {2009}
}

@article{purbhoo2013wronskians,
  title   = {Wronskians, cyclic group actions, and ribbon tableaux},
  author  = {Purbhoo, Kevin},
  journal = {Trans. Am. Math. Soc.},
  volume  = {365},
  pages   = {1977--2030},
  year    = {2013}
}

@article{levinson2017one,
  title   = {One-dimensional {S}chubert problems with respect to osculating flags},
  author  = {Levinson, Jake},
  journal = {Can. J. Math.},
  volume  = {69},
  pages   = {143--185},
  year    = {2017}
}

@article{gaetz2025web,
  title   = {Web bases in degree two from hourglass plabic graphs},
  author  = {Gaetz, Christian and Pechenik, Oliver and Pfannerer, Stephan and Striker, Jessica and Swanson, Joshua P.},
  journal = {Int. Math. Res. Not.},
  volume  = {2025},
  pages   = {rnaf189, 23 pp.},
  year    = {2025},
}

@article{alexandersson2021skew,
  title   = {Skew characters and cyclic sieving},
  author  = {Alexandersson, Per and Pfannerer, Stephan and Rubey, Martin and Uhlin, Joakim},
  journal = {Forum Math. Sigma},
  volume  = {9},
  pages   = {e41, 32 pp.},
  year    = {2021},
}

@article{pfannerer2020promotion,
  title   = {Promotion on oscillating and alternating tableaux and rotation of matchings and permutations},
  author  = {Pfannerer, Stephan and Rubey, Martin and Westbury, Bruce},
  journal = {Algebr. Comb.},
  volume  = {3},
  pages   = {107--141},
  year    = {2020}
}

@article{gaetz2023promotion,
  author  = {Gaetz, Christian and Pechenik, Oliver and Pfannerer, Stephan and Striker, Jessica and Swanson, John P.},
  title   = {Promotion permutations for tableaux},
  journal = {Comb. Theory},
  volume  = {4},
  pages   = {15, 56 pp.},
  year    = {2024},
}

@inproceedings{Roby2014,
  author    = {Roby, Thomas},
  title     = {Dynamical algebraic combinatorics and the homomesy phenomenon},
  booktitle = {Recent Trends in Combinatorics},
  series    = {The IMA Volumes in Mathematics and its Applications},
  volume    = {159},
  publisher = {Springer},
  year      = {2018},
  pages     = {619--652}
}

@article{Sniady2014,
  author  = {{\'S}niady, Piotr},
  title   = {Robinson--{S}chensted--{K}nuth algorithm, jeu de taquin, and {K}erov--{V}ershik measures on infinite tableaux},
  journal = {SIAM J. Discrete Math.},
  volume  = {28},
  pages   = {598--630},
  year    = {2014}
}

@article{MaslankaSniady2019,
  author  = {Ma{\'s}lanka, {\L}ukasz and {\'S}niady, Piotr},
  title   = {Second class particles and limit shapes of evacuation and sliding paths for random tableaux},
  journal = {Doc. Math.},
  volume  = {27},
  pages   = {2183--2273},
  year    = {2022}
}

@book{Sagan2001,
  AUTHOR    = {Sagan, Bruce E.},
  TITLE     = {The {S}ymmetric {G}roup},
  SERIES    = {Graduate Texts in Mathematics},
  VOLUME    = {203},
  EDITION   = {2nd},
  PUBLISHER = {Springer},
  YEAR      = {2001},
  PAGES     = {xvi+238},
  MRNUMBER  = {1824028}
}

@book{MR1046376,
  AUTHOR    = {Devaney, Robert L.},
  TITLE     = {An {I}ntroduction to {C}haotic {D}ynamical {S}ystems},
  SERIES    = {Studies in Nonlinearity},
  EDITION   = {2nd},
  PUBLISHER = {Addison-Wesley Publishing Company},
  YEAR      = {1989},
  PAGES     = {xviii+336},
  MRNUMBER  = {1046376}
}

@article{MR1157223,
  AUTHOR  = {Banks, J. and Brooks, J. and Cairns, G. and Davis, G. and Stacey, P.},
  TITLE   = {On {D}evaney's definition of chaos},
  JOURNAL = {Am. Math. Mon.},
  VOLUME  = {99},
  YEAR    = {1992},
  PAGES   = {332--334},
  MRNUMBER = {1157223}
}

@article{MR2515772,
  AUTHOR = {Stanley, Richard P.},
  TITLE = {Promotion and evacuation},
  JOURNAL = {Electron. J. Comb.},
  VOLUME = {16},
  YEAR = {2009},
  PAGES = {R9, 24 pp.},
  MRNUMBER = {2515772},
}
\end{document}